\newcommand{\vc}[1]{\overrightarrow{#1}}
\newcommand{\be}{\begin{equation}}
\newcommand{\ee}{\end{equation}}
\def\R{\mathbb{R}}
\def\e1{\vc{e}_1}
\def\e2{\vc{e}_2}
\def\e3{\vc{e}_3}
\def\dis{\displaystyle}
\def\section{\@startsection{section}{1}%
  \z@{1.5\linespacing\@plus\linespacing}{.5\linespacing}%
  {\normalfont\bfseries\large\centering}}
\newtheorem{theorem}{Theorem}[section]
\newtheorem{lemma}[theorem]{Lemma}
\newtheorem{proposition}[theorem]{Proposition}
\theoremstyle{remark}
\newtheorem{remark}[theorem]{\it \bf{Remark}\/}
\numberwithin{equation}{section}
\newcommand{\bea}{\begin{eqnarray}}
\newcommand{\eea}{\end{eqnarray}}
\newcommand{\bee}{\begin{eqnarray*}}
\newcommand{\eee}{\end{eqnarray*}}
\def\na{\nabla}
\def\e{\varepsilon}
\def\NN{\mathbb{N}}
\def\RR{\mathbb{R}}
\def\ds{\displaystyle}
\def\ni{\noindent}
\def\bs{\bigskip}
\def\ms{\medskip}
\def\eps{\varepsilon}
\def\bar#1{{\overline #1}}
\def\fref#1{{\rm (\ref{#1})}}
\def\pref#1{{\rm \ref{#1}}}
\def\calH{{\mathcal H}}
\def\calE{{\mathcal E}}
\def\supess{\mathop{\operator@font Sup\,ess}}
\def\un{{\mathbbmss{1}}}
\title[Ground states and blow-up for the Vlasov-Manev system]{Stable ground states and self-similar blow-up solutions for the gravitational Vlasov-Manev system}
\author{Mohammed Lemou}
\address{IRMAR and CNRS, Universit\'e de Rennes 1, France}
\email{mohammed.lemou@univ-rennes1.fr}
\urladdr{http://perso.univ-rennes1.fr/mohammed.lemou/}
\author{Florian M\'ehats}
\address{IRMAR, Universit\'e Rennes 1, France}
\email{florian.mehats@univ-rennes1.fr}
\urladdr{http://perso.univ-rennes1.fr/florian.mehats/}
\author{Cyril Rigault}
\address{IRMAR, Universit\'e Rennes 1, France}
\email{cyril.rigault@univ-rennes1.fr}
\begin{document}



\begin{abstract}
In this work, we study the orbital stability of steady states  and the existence of blow-up self-similar solutions to the so-called Vlasov-Manev (VM) system.
This system is a kinetic model which  has a similar Vlasov structure  as the classical Vlasov-Poisson system,  but is coupled to a potential in $-1/r- 1/r^2$ (Manev potential) instead of the usual gravitational potential in $-1/r$, and in particular the potential field does not satisfy a Poisson equation but a fractional-Laplacian equation.  We first prove the orbital stability of the ground states type solutions which are constructed as minimizers of the Hamiltonian, following the classical strategy: compactness of the minimizing sequences and the rigidity of the flow.  However, in driving this analysis, there are two mathematical obstacles: the first one is related to the possible blow-up of solutions to the VM system, which we overcome by imposing a sub-critical condition on the constraints of  the variational problem. The second difficulty (and the most important) is  related to the nature of the Euler-Lagrange equations (fractional-Laplacian equations) to which classical results for the Poisson equation do not extend. We overcome this difficulty  by proving  the uniqueness of the minimizer under equimeasurabilty constraints, using only the regularity of the potential and not the fractional-Laplacian Euler-Lagrange equations itself.  In the second part of this work,  we prove the existence of exact self-similar blow-up solutions to the Vlasov-Manev equation, with initial data arbitrarily close to ground states. This construction is based on a suitable variational problem with equimeasurability constraint.
\end{abstract}

\maketitle

\section{Introduction and main results}


In this paper, we study the stability of steady states and the existence of blow-up self-similar solutions to the Vlasov-Manev (VM) model for gravitational systems. In this mean field kinetic model, the usual Newtonian interaction potential is replaced by the so-called Manev potential. This potential corrects the Newtonian gravitational potential as follows:
$$U(x)=-\frac{1}{4\pi |x|}-\frac{\kappa}{2\pi^2 |x|^2},$$
where $\kappa$ is a positive constant. First it was studied by Manev  in the 1920' as an alternative way of Einstein's relativity  to explain the advance of the perihelion of Mercury unexplained by Newton's laws \cite{Ma1, Ma2, Ma3, Ma4}. And recently, F. Diacu, A. Mingarelli, V. Mioc and C. Stoica [6] followed by R. Illner, H.D. Victory, P.Dukes and A.V. Bobylev [2-3] gave the basics for the comeback of the Manev model, described by the first ones as "a fairly good substitute of relativity within the frame of classical mechanics".

We then consider  in this paper the case of a potentiel given by:
$$U(x)=-\frac{\delta}{4\pi |x|}-\frac{\kappa}{2\pi^2 |x|^2},$$
where $\delta$ is a nonnegative constant. Further physical studies of this potential can be found in  \cite{DM}. The case $\delta=0$, $\kappa=1$ will be referred to as the {\em pure Manev case}. The case $\delta> 0$, $\kappa\geq 0$  will be referred to as  {\em the Poisson-Manev case} which includes the Newtonian case $\delta=1$, $\kappa=0$. Note that at the limit $\delta\rightarrow1$ and $\kappa \rightarrow 0$, we recover the stability of steady states proved in \cite{LMR,LM2}.

Taking into account this correction, the standard Vlasov-Poisson system is replaced by the following Vlasov-Manev system:
\be\left\{\begin{array}{lc} \partial_{t}f+v \cdot \nabla_{x}f-\nabla_{x} \phi_f
\cdot \nabla_{v}f=0,   &(t,x,v) \in \R_{+}\times\R^{3}\times\R^{3},
\\ &  \\ f(t=0,x,v)=f_{0}(x,v) \geq 0,   & \\ \end{array}\right. \label{vm} \ee
in which $f=f(t,x,v)\geq 0$ is a distribution function  and $\phi_f$ the associated potential defined as follows. We have
\be
\phi_f(t,x)=\delta \phi_f^P+\kappa \phi_f^M, \label{pot}\ee
where $\phi_f^P$ and $\phi_f^M$ are respectively the Poisson potential and the Manev potential of $f$ given by:
\be  \phi_f^P(t,x)=-\int_{\R^3}\frac{\rho_f(t,y)}{4\pi |x-y|}dy,\qquad   \phi_f^M(t,x)=-
\int_{\R^3}\frac{\rho_f(t,y)}{2\pi^2|x-y|^{2}}dy, \label{2pot} \ee
 $\rho_f$ being the density associated with the distribution function $f$: 
$$\rho_f(t,x)=\int_{\R^3} f(t,x,v)dv.$$
Note that the two potentials satisfy
$$\triangle \phi_f^P= \rho_f \ \ \textrm{and}\ \ \left(-\triangle\right)^{1/2} \phi_f^M = -\rho_f, $$
and in particular the system \eqref{vm} reduces to the well-known gravitational Vlasov-Poisson system in the case $\delta=1$ and $\kappa=0$. 

To our knowledge, the only existing mathematical analysis of the Vlasov-Manev model is due to Bobylev, Dukes, Illner and Victory   \cite{VM1, VM2}. In these works, the local existence of regular solutions is proved and some questions of global existence and finite-time blow-up are discussed.

We now  give some basic properties of the Vlasov-Manev system \eqref{vm}. Sufficiently regular solutions  to \eqref{vm} on  a time interval $[0,T]$ satisfy the conservation of the so-called Casimir functionals:
\be \forall t\in [0,T],\qquad \ \|j(f(t))\|_{L^1}=\|j(f_0)\|_{L^1} \label{conserv-j}\ee
and the conservation of the Hamiltonian
$$ \forall t\in [0,T], \qquad  \ \mathcal{H}(f(t)) = \mathcal{H}(f_0), $$
where $j$ is any smooth real-valued function with $j(0)=0$, and where
\be \mathcal{H}(f(t))=\left\| |v|^{2}f(t)\right\|_{L^1}-E_{pot}(f(t)). \label{def-H}\ee
The potential energy $E_{pot}$  is defined by  
$$
E_{pot}(f(t))=-\dis \int_{\R^3}\phi_f(t,x)\rho_f(t,x)dx=\delta E_{pot}^P(f(t))+\kappa E_{pot}^M(f(t)),
$$
where we have denoted
$$
E_{pot}^P(f(t))=- \dis \int_{\R^3}\phi_f^P(t,x)\rho_f(t,x)dx\quad\mbox{and}\quad E_{pot}^M(f(t))=- \dis \int_{\R^3}\phi_f^M(t,x)\rho_f(t,x)dx.
$$
These potential energies are controlled thanks to standard interpolation inequalities:
\be
0\leq E_{pot}^P(f)\leq C_1\left\| |v|^{2}f\right\|_{L^1}^{\frac{1}{2}}
\left\|f\right\|_{L^1}^{\frac{7p-9}{6(p-1)}}\left\| f \right\|_{L^p}^{\frac{p}{3(p-1)}},\label{ineg-inter-P}\ee
\be 
0\leq E_{pot}^M(f)\leq C_2\left\| |v|^{2}f\right\|_{L^1}
\left\|f\right\|_{L^1}^{\frac{p-3}{3(p-1)}}\left\|f\right\|_{L^p}^{\frac{2p}{3(p-1)}}, \label{ineg-inter-M}\ee
for all $p\geq 3$.

Our aim in  this paper is twofold. First we  prove the  orbital stability of ground states type  stationary solutions to  the Vlasov-Manev problem.  Second we prove the existence of exact self-similar solutions to the pure Manev case and in particular we construct a continuous family of blow-up solutions to this system around each ground state. While the question of non linear stability has not been studied in the past for the VM system, it has attracted  considerable attention in the case of the Vlasov-Poisson system ($\kappa=0$), both in physics (see \cite{A1,A2},  \cite{binney} and the references therein) and mathematics community  \cite{Wol,G,GR1,GR2,LMR,SS,LM3}.  
We emphasize that the structure of the equation in the pure Manev case ($\delta=0$, $\kappa=1$) can be compared in some sense with the Vlasov-Poisson system in dimension 4 (see \cite{LMR}), where blow-up self-similar profiles and pseudo-conformal symmetry are exhibited. In the pure Manev case, we shall construct ground states by minimizing the constant in the interpolation inequality \fref{ineg-inter-M}, following the standard strategy as in the case of nonlinear Schr\"odinger equation \cite{weinstein}.

On the other hand, as already noticed in \cite{VM1}, the case of the general VM system ($\delta>0$) shares similar mathematical properties with the relativistic  Vlasov-Poisson system \cite{LM2}. In \cite{LM2}, the stability of steady state solutions to the relativistic Vlasov-Poisson equation is proved by minimizing the energy and by using a homogeneity-breaking property which comes from the  fact  that the relativistic kinetic energy is a non-homogeneous velocity moment of the distribution function. In the present case of VM system, the homogeneity-breaking  comes from  the presence of two contributions in the general VM potential with different homogenities.  This homogeneity-breaking property makes possible to build a well-posed  variational problem provided a sub-critical condition is imposed on the constraints. Notice that the subcritical condition for the well-posedness of the variational problem in the context of the relativistic Vlasov-Poisson system was also observed in \cite{KTZ}. In driving the classical approach in a similar way as in \cite{LM2} and  \cite{LMR}, a new important difficulty appears. This difficulty is related to the nature of the Euler-Lagrange equations  
to which classical results for the Poisson equation do not extend.  In the classical VP case, a complete stability result is generally obtained by using both the Euler-Lagrange  equation (which is equivalent to a non linear Poisson equation) and the rigidity of the flow.  In the present case, the Euler-Lagrange equation is a fractional-Laplacian equation, and this prevents from using ODE techniques. Nevertheless, we prove the uniqueness of the minimizer under equimeasurable constraints by a new argument  which completely avoids ODE techniques. This argument is again used, together with the help of suitable rearrangement techniques as introduced in \cite{LMR-inv, LM3}, to prove the existence of exact self-similar solutions in the pure Manev case, and  to build a continuous family of blow-up solutions around each minimizer.

\bs

In order to state our main results, let us make precise our assumptions. Consider a function $j:\R_{+}\rightarrow\R_{+}$ satisfying the  following hypotheses.\\
(H1) $j$ is a $\mathcal{C}^2$ function, with $j(0)=j'(0)=0$ and such that $j''(t)>0$ for $t>0$.\\
(H2) There exist $p, q >3$ such that
\be p\leq \dfrac{tj'(t)}{j(t)}\leq q,\ \ \ \ \forall t>0. \label{j-H3}\ee 
\\
We  note that (H2) is equivalent to the nondichotomy condition:
\be b^{p}j(t) \leq j(bt) \leq b^{q}j(t),\ \ \forall b\geq 1,\ t\geq 0. \label{j-nondicho}\ee 
For a function $j$ satisfying (H1) and (H2), we define the corresponding energy space
\be \mathcal{E}_j=\{f\geq 0\ \textrm{ such that }\|f\|_{\mathcal{E}_j}:=\ \|f\|_{L^1}+\|j(f)\|_{L^1}+\left\| |v|^2 f\right\|_{L^1} < +\infty\}
\label{def-espace}\ee
and we shall say that a sequence $f_n$ converges to $f$ in $\mathcal{E}_j$ if
$$\|f_n-f\|_{L^1}\to 0,\quad \|j(f_n-f)\|_{L^1}\to 0\quad \mbox{and}\quad \left\| |v|^2 (f_n-f)\right\|_{L^1}\to 0.$$
{}From the interpolation inequality \eqref{ineg-inter-M}, the following constant is strictly positive:
\be K_j^M=\inf_{f \in \mathcal{E}_j\backslash\{0\}}K_j^M(f) \ \ \textrm{with} \ \ K_j^M(f)=\frac{\left\| |v|^{2}f\right\|_{L^1}
\left\|f\right\|_{L^1}^{\frac{p-3}{3(p-1)}}\left\| f+j(f) \right\|_{L^1}^{\frac{2}{3(p-1)}}}{E_{pot}^M(f)}.\label{def-KM}\ee
Indeed, from \fref{j-nondicho} one has $t+j(t)\geq Ct^p$ for all $t\geq 0$.

\bs
 In our first result, we establish the existence of ground states for the Vlasov-Manev problem.
\begin{theorem} [Existence of ground states]\label{theo1}
Let $j$ be a function satisfying {\rm (H1)} and {\rm (H2)}. \\
(i) {\rm Poisson-Manev case} ($\delta > 0$). Let $M_1>0$, $M_j>0$ such that
\be \kappa M_1^{\frac{p-3}{3(p-1)}}(M_1+M_j)^{\frac{2}{3(p-1)}}<K_j^M, \label{subcrit}\ee
where $K_j^M$ is defined by \eqref{def-KM}, and let 
$$\mathcal{F}(M_1,M_j)=\{f\in \mathcal{E}_j,\ \|f\|_{L^1}=M_1,\ \|j(f)\|_{L^1}=M_j\}.$$ 
Then there exists a steady state of \eqref{vm} which minimizes the variational problem
\be I(M_1,M_j)= \inf_{f\in \mathcal{F}(M_1,M_j)}\mathcal{H}(f),\label{def-I}\ee
where $\mathcal{H}$ is the Hamiltonian defined by \eqref{def-H}.\\
(ii) {\rm Pure Manev case ($\delta=0$, $\kappa=1$)}. For all $M_1, M_j>0$, the following variational problem
\be J(M_1,M_j)= \inf_{f\in \mathcal{F}(M_1, M_j)}K(f),\quad \mbox{with }K(f):=\frac{\left\| |v|^{2}f\right\|_{L^1}
}{E_{pot}^M(f)}.\label{def-K}\ee
admits a minimizer. Furthermore, for any given $M_1>0$, there exists a unique $M_j>0$ such that $J(M_1,M_j)=1$. Moreover, the minimizers of \fref{def-K} are steady states to \fref{vm} if, and only if $J(M_1,M_j)=1$.\\
(iii) In both cases ($\delta\geq 0$), any steady state $Q$ obtained as a minimizer of \fref{def-I} or \fref{def-K} is continuous, compactly supported and takes the form 
\be Q(x,v)=(j')^{-1}\left(\dfrac{\frac{|v|^2}{2}+\phi_{Q}(x)-\lambda}{\mu}\right)_+ \label{expression-Q}\ee
where $\lambda$ and $\mu$ are negative constants. Moreover, $\phi_{Q}(x)$ is spherically symmetric (up to a translation shift), increasing  and belongs to $\mathcal{C}^{1,\alpha}$, for all $\alpha\in(0,1)$.
In \eqref{expression-Q}, we used the notation $a_+= max(a,0)$.
\end{theorem}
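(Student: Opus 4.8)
The plan is to treat the two variational problems by concentration-compactness, and then to read off the structure of the minimizers from their Euler--Lagrange equations together with a symmetrization argument. For Part (i) I would first prove that $\mathcal H$ is coercive on $\mathcal F(M_1,M_j)$: on this set $\|f+j(f)\|_{L^1}=M_1+M_j$ and $\|f\|_{L^p}^p\le C^{-1}(M_1+M_j)$ by \eqref{j-nondicho}, so \eqref{ineg-inter-M} gives $\kappa E_{pot}^M(f)\le\Theta\,\||v|^2f\|_{L^1}$ with $\Theta=\kappa M_1^{\frac{p-3}{3(p-1)}}(M_1+M_j)^{\frac{2}{3(p-1)}}/K_j^M<1$ by \eqref{subcrit}, while \eqref{ineg-inter-P} gives $\delta E_{pot}^P(f)\le C\,\||v|^2f\|_{L^1}^{1/2}$ with $C$ depending only on the constraints; hence $\mathcal H(f)\ge(1-\Theta)\||v|^2f\|_{L^1}-C\,\||v|^2f\|_{L^1}^{1/2}$, so $I(M_1,M_j)>-\infty$ and minimizing sequences are bounded in $\mathcal E_j$. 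Next I would show $I(M_1,M_j)<0$ by evaluating $\mathcal H$ on a curve $f_\sigma(x,v)=a_\sigma\sigma^{-3}\theta_\sigma^{-3}g(x/\sigma)h(v/\theta_\sigma)$ with $a_\sigma,\theta_\sigma$ tuned so that $f_\sigma\in\mathcal F(M_1,M_j)$: along this curve the amplitude and $\sigma\theta_\sigma$ are forced to be constant and $\mathcal H(f_\sigma)=B\sigma^{-2}-\delta C'\sigma^{-1}$ with $C'>0$ and $B>0$, the positivity of $B$ being again \eqref{subcrit}, which is negative for $\sigma$ large precisely because $\delta>0$. With $I<0$ the concentration-compactness alternative applied to $(\rho_{f_n})$ is resolved: vanishing is impossible since it would give $E_{pot}(f_n)\to0$ and $\liminf\mathcal H(f_n)\ge0>I$; dichotomy is excluded by a strict binding inequality for $I$ that exploits the different homogeneities of the $1/r$ and $1/r^2$ parts (the homogeneity-breaking property). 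Thus, up to a translation, a subsequence of $f_n$ converges strongly in $\mathcal E_j$ to a minimizer $Q\in\mathcal F(M_1,M_j)$, which by \eqref{expression-Q} (established in (iii)) is a function of the particle energy $\tfrac{|v|^2}{2}+\phi_Q$, hence a stationary solution of \eqref{vm}.

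For Part (ii), the functional $K$ is scale-covariant: under $f\mapsto f(x/\sigma,v/\theta)$ one has $K\mapsto(\sigma\theta)^{-1}K$ and $(M_1,M_j)\mapsto\sigma^3\theta^3(M_1,M_j)$, so $J(tM_1,tM_j)=t^{-1/3}J(M_1,M_j)$. For existence I would take a minimizing sequence and use the dilations with $\sigma^3\theta^3=1$ (which preserve $\mathcal F(M_1,M_j)$) to normalize $\||v|^2f_n\|_{L^1}=1$; then $E_{pot}^M(f_n)=1/K(f_n)\to1/J$ stays bounded away from $0$ and $\infty$, $(f_n)$ is bounded in $\mathcal E_j$ by \eqref{ineg-inter-M} and \eqref{j-nondicho}, and the concentration-compactness argument of Part (i) (vanishing killed since $E_{pot}^M(f_n)\not\to0$, dichotomy by super-additivity of $E_{pot}^M$) produces a minimizer. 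Dilating a minimizer of $J(M_1,M_j)$ in $x$ only, $f\mapsto\sigma^{-3}f(x/\sigma,v)$ with $\sigma<1$, keeps $\|f\|_{L^1}$, strictly increases $\|j(f)\|_{L^1}$ (by \eqref{j-nondicho}) and multiplies $K$ by $\sigma^2$; hence $M_j\mapsto J(M_1,M_j)$ is continuous and strictly decreasing, with $J\to0$ as $M_j\to\infty$, and together with the analysis of the regime $M_j\to0^+$ (where one shows $J\to+\infty$) this yields, for each $M_1$, a unique $M_j$ with $J(M_1,M_j)=1$. Finally, the Euler--Lagrange equation for $K$ gives on $\{Q>0\}$ the relation $j'(Q)=(|v|^2+2J\phi_Q^M-\widetilde\lambda)/\widetilde\mu$; since $\phi_Q^M$ is non-constant, $|v|^2+2J\phi_Q^M$ is an affine function of the particle energy $\tfrac{|v|^2}{2}+\phi_Q^M$ if and only if $J=1$, which is exactly the condition for $Q$ to be a stationary solution of \eqref{vm}.

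For Part (iii), perturbing a minimizer $Q$ of \eqref{def-I} or \eqref{def-K} inside the constraint set (the two constraints being independent because $j'$ is strictly increasing and $Q$ non-constant) and using $\delta E_{pot}/\delta f=-2\phi_Q$ gives the Euler--Lagrange relation $|v|^2+2\phi_Q(x)=\lambda_0+\mu_0 j'(Q)$ on $\{Q>0\}$ and $\ge\lambda_0$ on $\{Q=0\}$, where $\lambda_0,\mu_0$ are the Lagrange multipliers of the mass and $j$-mass constraints; since $j'$ is a continuous increasing bijection of $\mathbb R_+$ with $j'(0)=0$, this is equivalent to \eqref{expression-Q} with $\lambda=\lambda_0/2$, $\mu=\mu_0/2$. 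One checks $\mu<0$ (otherwise $Q$ would be positive for all large $|v|$, or supported on a null set, both impossible for $Q\in\mathcal F(M_1,M_j)$), then $\lambda<0$, and consequently $Q$ is compactly supported in $(x,v)$ because $\phi_Q\to0$ at infinity makes $\{\tfrac{|v|^2}{2}+\phi_Q<\lambda\}$ bounded. Spherical symmetry is obtained by symmetrization: there is an equimeasurable rearrangement $f\mapsto f^\#$ (so $\|f\|_{L^1}$ and $\|j(f)\|_{L^1}$ are preserved) making $\rho_{f^\#}$ radially decreasing, not increasing $\||v|^2f\|_{L^1}$, and not decreasing $E_{pot}^P$ and $E_{pot}^M$ by the Riesz inequality (both kernels being radially decreasing); thus $\mathcal H(Q^\#)\le\mathcal H(Q)$ (resp.\ $K(Q^\#)\le K(Q)$), so $Q^\#$ is again a minimizer and the equality cases force $Q$, $\rho_Q$ and $\phi_Q$ to be radial up to translation, with $\phi_Q$ radially increasing since $\rho_Q$ is radially decreasing. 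Regularity is then a bootstrap on \eqref{expression-Q}: $Q\in\mathcal E_j$ and \eqref{expression-Q} give $\phi_Q$ bounded and continuous, hence $Q$ continuous; moreover $0\le Q\le(j')^{-1}(|\phi_Q|/|\mu|)$ and $\rho_Q\lesssim|\phi_Q|^{3/2}(j')^{-1}(|\phi_Q|/|\mu|)$, which fed back into the potential representations improves integrability until $\rho_Q\in\mathcal C^{0,\alpha}$; then $\phi_Q^P\in\mathcal C^{2,\alpha}$ by elliptic regularity and $\phi_Q^M\in\mathcal C^{1,\alpha}$ because $\nabla\phi_Q^M$ is the convolution of $\rho_Q$ with a Calder\'on--Zygmund kernel, and iterating raises the H\"older exponent to any value $<1$.

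The hard parts are the compactness step and the regularity in (iii). In the compactness step the difficulty is tightness and, above all, the strict binding / super-additivity inequalities needed to exclude dichotomy, in the presence of the nonlinear constraint $\|j(f)\|_{L^1}=M_j$ and (for $\delta>0$) of two competing homogeneities. In (iii) the fractional-Laplacian nature of the Manev potential is the real obstacle: since $(-\Delta)^{1/2}\phi_Q^M=-\rho_Q$ and the kernel $1/|x|^2$ in $\mathbb R^3$ is borderline for local integrability of its gradient, one does not gain a derivative for free from $\rho_Q\in L^\infty$ and must run the H\"older bootstrap above; this is also why, unlike for Vlasov--Poisson, \eqref{expression-Q} cannot be analysed by Newtonian-shell/ODE methods.
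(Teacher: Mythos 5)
Your outline is correct, but the compactness step in parts (i)--(ii) follows a genuinely different route from the paper's. You propose the full Lions concentration-compactness trichotomy and rule out dichotomy by a strict-binding/super-additivity inequality. The paper avoids this: it first replaces each $f_n$ by its Schwarz rearrangement in $x$ (which does not increase $\mathcal{H}$ by the Riesz inequality, and preserves both constraints), so the minimizing sequence may be taken radially symmetric in $x$. For radially symmetric sequences bounded in $\mathcal{E}_j$, Lemma~\ref{propC2} gives weak $L^p$ convergence to some $f$ together with \emph{strong} convergence of $E_{pot}$, so a weak limit $f\neq 0$ with $\mathcal{H}(f)\le I(M_1,M_j)$ is obtained without any vanishing/dichotomy discussion; then $\|f\|_{L^1}=\alpha M_1$, $\|j(f)\|_{L^1}=\beta M_j$ with $0<\alpha,\beta\le 1$, and the sub-homogeneity inequalities \eqref{I-sup1}--\eqref{I-supj} of Lemma~\ref{lemmonotone} force $\alpha=\beta=1$. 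The paper does prove the nondichotomy inequality \eqref{nondicho-I}, but it only \emph{needs} full concentration-compactness in Section~\ref{sect2}, where all (not-necessarily-radial) minimizing sequences must be shown compact for the stability argument. Your approach works and is closer to what is done there, but for bare existence the paper's symmetrize-then-extract route is leaner: it saves you from having to make the strict-binding estimate precise at this stage.

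Two smaller remarks on (iii). First, the bootstrap you sketch is the right idea, but the critical last step in the paper is the explicit Lipschitz estimate \eqref{claim-reg} on the map $k\mapsto\int(j')^{-1}(s)(k-s)_+^{1/2}\,ds$, which upgrades $\phi_Q\in\mathcal{C}^{0,\alpha}$ to $\rho_Q\in\mathcal{C}^{0,\alpha}$ and hence $\phi_Q\in\mathcal{C}^{1,\alpha}$ via $\phi_Q=\Delta^{-1}\rho_Q-(-\Delta)^{-1/2}\rho_Q$; ``convolution with a Calder\'on--Zygmund kernel'' by itself does not give the H\"older gain for the $(-\Delta)^{-1/2}$ part unless you already know $\rho_Q$ is H\"older, which is exactly the content of \eqref{claim-reg}. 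Second, the paper determines the signs of $\lambda$ and $\mu$ from closed-form expressions produced by the constrained perturbation ($\mu=-\tfrac{\||v|^2Q\|_{L^1}}{3C_Q}$ with $C_Q=\|j'(Q)Q\|_{L^1}-M_j>0$ by (H2), and $\lambda$ in terms of $I(M_1,M_j)<0$); your indirect argument for $\mu<0$ (integrability of $Q$) also works, but you should note that the nonnegativity $\tfrac{|v|^2}{2}+\phi_Q-\lambda\ge 0$ off the support is part of the first-order condition, not something to check separately.
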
 
Notice that in the case $\delta=1$ and $\kappa=0$, the condition \eqref{subcrit} is always satisfied. In this case, the Vlasov-Manev system \eqref{vm} is nothing but the classical Vlasov-Poisson system, for which it is already known that minimizers of the two constraints problem \eqref{def-I} always exist and that the minimizing sequences are compact, see \cite{LMR}. In \cite{LM2}, the orbital stability in the case of the VP system has been proved thanks to a uniqueness result of these minimizers which was based on a  combination of  the Poisson equation and the rigidity of the flow.

\bs
Our second main result concerns the orbital stability  of the above constructed ground states under the action of the Vlasov-Manev flow. As in  \cite{LM2}, the proof of these stability results needs in a crucial way the uniqueness of the minimizer under some flow constraints (namely the equimeasurability property).  However in  \cite{LM2},  the proof of this uniqueness was based on the use of the Poisson equation satisfied by the minimizer. Here, the Euler-Lagrange equation is a fractional Laplacian equation and the proof of  \cite{LM2} cannot be used. In fact, we prove this uniqueness result in a way that completely avoids the use of the Euler-Lagrange equation, and in particular, this generalizes also the uniqueness result obtained in   \cite{LM2}. The only property of the minimizers that we use is their equimeasurablity. In particular, our proof avoids the usual ODE techniques, which in fact, are useless here since the Euler-Lagrange equation is a fractional-Laplacian equation. 

\begin{lemma}[Uniqueness of the minimizer under equimeasurability condition]\label{theo2}
Let $F \in \mathcal{C}^0(\R,\R_+)$, strictly decreasing on $\R_-$, such that $F(\R_-)=\R_+$ and $F(\R_+)=\{0\}$. We define 
$$Q_0(x,v)=F\left(\frac{|v|^2}{2}+\psi_0(x)\right), \qquad Q_1(x,v)=F\left(\frac{|v|^2}{2}+\psi_1(x)\right)$$
 on $\R^3\times \R^3$, where $\psi_0$ and $\psi_1$ are two nondecreasing continuous radially symmetric potentials such that the sets $\{x\in \R^3,\ \psi_0(x)<0\}$ and $\{x\in \R^3,\ \psi_1(x)<0\}$ are bounded. Then the equimeasurability of $Q_0$ and $Q_1$ for the Lebesgue measure in $\R^6$, i.e.
\be \forall t>0,\ \mbox{meas}\{(x,v)\in \R^6,\ Q_0(x,v)>t\}= \mbox{meas}\{(x,v)\in \R^6,\ Q_1(x,v)>t\},\label{def-equi}\ee
implies that $Q_0=Q_1$. In particular:\\
(i) {\rm Poisson-Manev case} ($\delta \neq 0$):  two equimeasurable steady states of \eqref{vm} which minimize \eqref{def-I} under the subcritical condition \eqref{subcrit} are equal up to a translation in space.\\
(ii) {\rm Pure Manev case} ($\delta = 0$, $\kappa=1$): two equimeasurable steady states of \eqref{vm} which minimize \fref{def-K} and which have the same kinetic energy are equal up to a translation shift in space.
\end{lemma}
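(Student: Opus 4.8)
The plan is to show that the equimeasurability hypothesis forces $\psi_0 = \psi_1$, from which $Q_0 = Q_1$ follows immediately by the definition of $Q_0, Q_1$ in terms of $F$. The key observation is that for a steady state of the form $F\big(\frac{|v|^2}{2} + \psi(x)\big)$ with $F$ supported on $\R_-$, the spatial density can be computed by integrating out the velocity variable, and this gives a closed relation between $\rho_\psi(x)$ and the value $\psi(x)$. Precisely, writing $g(s) = \int_{\R^3} F\big(\frac{|w|^2}{2} + s\big)\,dw$, which by the change of variables $w \mapsto \sqrt{|s|}\,w$ (on the set where $s < 0$) scales like $|s|^{3/2}$ times a fixed positive constant, we get that $\rho_{Q_i}(x) = g(\psi_i(x))$ is a fixed strictly decreasing function of $\psi_i(x)$ on $\{\psi_i < 0\}$ and vanishes elsewhere. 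In particular the supports $\{\rho_{Q_i} > 0\} = \{\psi_i < 0\}$ are balls $B(0,R_i)$ (after the translation that centers the radial potential), and on these balls $\psi_i$ is a strictly increasing function of $|x|$.

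The next step is to extract from the $6$-dimensional equimeasurability \eqref{def-equi} a $3$-dimensional equimeasurability of the densities. Integrating the layer-cake identity, for any convex function one has relations between $\int H(Q_i)$ and the distribution function of $Q_i$; more directly, equimeasurability of $Q_0$ and $Q_1$ implies $\int_{\R^6} H(Q_0) = \int_{\R^6} H(Q_1)$ for every continuous $H$ with $H(0)=0$, and integrating in $v$ first turns the left side into $\int_{\R^3} (\text{function of } \psi_0(x))\,dx$. Choosing $H$ appropriately — or, cleanly, using that $\rho_{Q_i} = g \circ \psi_i$ together with the fact that $g$ is a fixed monotone map — one deduces that $\rho_{Q_0}$ and $\rho_{Q_1}$ are equimeasurable on $\R^3$. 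Since both are radial (decreasing in $|x|$) and equimeasurable, and a radially symmetric nonincreasing function is its own symmetric decreasing rearrangement, we conclude $\rho_{Q_0} = \rho_{Q_1}$ as functions on $\R^3$ (after matching the centers), hence $R_0 = R_1 =: R$.

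Now I would close the loop using the potential equation and the regularity/monotonicity already recorded in Theorem~\ref{theo1}. We have $\rho_{Q_0} = \rho_{Q_1}$, so the associated self-consistent potentials — built from $\rho_{Q_i}$ by the same convolution kernel ($-1/(4\pi|x|)$ in the Poisson-Manev case, $-1/(2\pi^2|x|^2)$ or the combination in the pure Manev case) — coincide: $\phi_{Q_0} = \phi_{Q_1}$. For genuine steady states this is exactly $\psi_0 = \psi_1$ (up to the additive constants absorbed in $F$), giving $Q_0 = Q_1$ and proving parts (i) and (ii). For the abstract statement with general nondecreasing radial $\psi_0, \psi_1$ (not a priori self-consistent), I instead argue directly: $g \circ \psi_0 = \rho_{Q_0} = \rho_{Q_1} = g \circ \psi_1$ on $\R^3$, and since $g$ is strictly decreasing (hence injective) on $(-\infty,0)$ and $\psi_i$ takes values in $(-\infty,0)$ exactly on the common ball $B(0,R)$ while $\psi_i \ge 0$ outside, the equality $g(\psi_0) = g(\psi_1)$ forces $\psi_0 = \psi_1$ on $B(0,R)$; and outside $B(0,R)$ both $Q_0$ and $Q_1$ vanish, so again $Q_0 = Q_1$ everywhere.

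The main obstacle is the passage from the $6$-dimensional equimeasurability to usable information about $\rho_{Q_i}$: one must verify that $g(s) = \int_{\R^3} F\big(\frac{|w|^2}{2}+s\big)\,dw$ is finite, continuous, and strictly decreasing on $\R_-$ with $g(0)=0$, which requires a mild integrability control on $F$ near $-\infty$ (guaranteed here because the relevant $F = (j')^{-1}$ composed with an affine map is bounded on any interval and the steady states of Theorem~\ref{theo1} are compactly supported, so effectively $F$ is only evaluated on a bounded set). Once $g$ is understood, the rest is rearrangement bookkeeping: equimeasurable radial monotone functions are equal. A secondary subtlety is the translation: the two potentials are radial about possibly different centers, so one first uses equimeasurability plus radial monotonicity to see that $Q_0$ and $Q_1$ have the same (up to translation) level sets, fixes the translation to align the centers, and then runs the argument above.
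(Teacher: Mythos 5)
Your proposal reformulates the problem in terms of the spatial densities $\rho_{Q_i}=g\circ\psi_i$ with $g(s)=\int_{\R^3}F\bigl(\tfrac{|w|^2}{2}+s\bigr)\,dw$, and this is a sensible and essentially equivalent packaging of what the paper does with the distribution functions $\mu_{\psi_i}(\lambda)=\mbox{meas}\{\psi_i<\lambda\}$. However, there is a genuine gap at the decisive step. You assert that the six-dimensional equimeasurability of $Q_0$ and $Q_1$ implies the three-dimensional equimeasurability of $\rho_{Q_0}$ and $\rho_{Q_1}$ (equivalently, that $\mu_{\psi_0}=\mu_{\psi_1}$), saying only that ``one deduces'' this from $\rho_{Q_i}=g\circ\psi_i$ and the monotonicity of $g$. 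This does not follow: what the equimeasurability of $Q_0,Q_1$ gives, after integrating out $v$, is the identity
\[
\int_{-\infty}^{\tau}\mu_{\psi_0}(w)\sqrt{\tau-w}\,dw=\int_{-\infty}^{\tau}\mu_{\psi_1}(w)\sqrt{\tau-w}\,dw,\qquad \forall\,\tau<0,
\]
i.e.\ the equality of the \emph{half-integrals} of $\mu_{\psi_0}$ and $\mu_{\psi_1}$, not of $\mu_{\psi_0}$ and $\mu_{\psi_1}$ themselves. Deducing $\mu_{\psi_0}=\mu_{\psi_1}$ from this requires inverting an Abel-type fractional integral operator; this inversion is precisely the technical heart of the paper's proof, carried out by differentiating, multiplying by $(\lambda-\tau)^{-1/2}$, integrating again, and invoking Fubini together with the identity $\int_w^\lambda\frac{d\tau}{\sqrt{(\lambda-\tau)(\tau-w)}}=\pi$. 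Without this (or an equivalent) argument, the chain of implications in your proposal is broken. The alternative you float---using $\int H(Q_0)=\int H(Q_1)$ for all $H$ and ``choosing $H$ appropriately''---runs into exactly the same obstacle: as $H$ ranges over admissible test functions, the induced $h(s)=\int H\bigl(F(\tfrac{|v|^2}{2}+s)\bigr)\,dv$ only spans the image of the same half-integral operator, so the density/injectivity of that operator is still what needs to be proved.

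A second, smaller omission concerns parts (i) and (ii). The minimizers produced by Theorem~\ref{theo1} have the form $Q_i=(j')^{-1}\bigl(\tfrac{\frac{|v|^2}{2}+\phi_{Q_i}-\lambda_i}{\mu_i}\bigr)_+$ with a priori \emph{different} Lagrange multipliers $\mu_0\ne\mu_1$, so they are not literally of the form $F\bigl(\tfrac{|v|^2}{2}+\psi_i\bigr)$ with a common $F$. One must first rescale $\widetilde Q_i(x,v)=Q_i\bigl(x/|\mu_i|^{1/2},|\mu_i|^{1/2}v\bigr)$ to put both into the common form, apply the abstract lemma to get $\widetilde Q_0=\widetilde Q_1$, i.e.\ $Q_1(x,v)=Q_0(x/\alpha,\alpha v)$ with $\alpha=\sqrt{\mu_1/\mu_0}$, and then use the additional hypothesis of each case---the virial identity $\tfrac{\delta}{2}E_{pot}^P(Q)=-I(M_1,M_j)$ in (i), the equal kinetic energies in (ii)---to conclude $\alpha=1$. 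Your proposal does not address this dilation ambiguity; the remark ``up to the additive constants absorbed in $F$'' only handles the shifts $\lambda_i$, not the multiplicative factors $\mu_i$.

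Finally, the appeal to the self-consistent potential equation ($\rho_{Q_0}=\rho_{Q_1}\Rightarrow\phi_{Q_0}=\phi_{Q_1}$) is unnecessary for the steady-state case and actually runs counter to the point the paper is making: a feature of this uniqueness lemma is that it deliberately \emph{avoids} using the Euler--Lagrange equation for the potential, since that equation is a nonlinear fractional-Laplacian equation here and the usual ODE arguments from the Vlasov--Poisson setting do not apply.
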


Now, using the compactness of all the minimizing sequences  of \fref{def-I} and \fref{def-K} (which will be proved) and the uniqueness result stated in Lemma \ref{theo2} we get the desired stability results. 
\begin{theorem}[Orbital stability of ground states]\label{theo3}\mbox{}\\
(i) {\rm Poisson-Manev case} ($\delta>0$). Let $M_1, M_j>0$ satisfy the subcritical condition \eqref{subcrit}. Then any steady state $Q$ of \eqref{vm} which minimizes \eqref{def-I} is orbitally stable under the flow \eqref{vm}. More precisely, given $\varepsilon>0$, there exists $\delta(\varepsilon)>0$ such that the following holds true. Consider $f_0$ a smooth function with $\|f_0-Q\|_{\mathcal{E}_j}\leq \delta(\varepsilon)$,
and let $f(t)$ be a classical solution to \eqref{vm} on a time interval $[0,T)$, $0<T\leq +\infty$, with initial data $f_0$. Then there exists a translation shift $x(t)\in \R^3$ such that, for all $t\in [0,T)$, we have
$$\left\|f(t,x+x(t),v)-Q\right\|_{\mathcal{E}_j} <\varepsilon.$$
(ii) {\rm Pure Manev case} ($\delta=0$, $\kappa=1$). Let $Q$ be a steady state of \eqref{vm} which minimizes \eqref{def-K}. Then for all $\eps>0$, there exists a constant $\delta(\varepsilon)>0$ such that the following property holds true. Let $f(t)$ be a classical solution to \eqref{vm} on a time interval $[0,T)$, $0<T\leq +\infty$, with initial data $f_0$, satisfying:\\
\hspace*{5mm}(a) $\|f_0-Q\|_{L^1} \leq \delta(\varepsilon)$ and $\|j(f_0)\|_{L^1} \leq \|j(Q)\|_{L^1} + \delta(\varepsilon)$,\\
\hspace*{5mm}(b) $\forall t\in[0,T)$, $\quad \lambda(t)^2 \mathcal{H}(f(t)) < \delta(\varepsilon)\quad $ where $\lambda(t)=\left(\frac{ \| |v|^2 Q \|_{L^1}}{ \| |v|^2 f(t) \|_{L^1}}\right)^{1/2}$.\\
Then there exists a translation shift $x(t)\in \R^3$ such that, for all $t\in [0,T)$, we have
$$ \left\| f\left(t,\lambda(t)(x+x(t)),\frac{v}{\lambda(t)}\right)-Q \right\|_{\mathcal{E}_j} <\varepsilon.$$
\end{theorem}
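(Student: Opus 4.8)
The plan is to run the Cazenave--Lions concentration--compactness scheme, as in \cite{LMR, LM2}. The two ingredients are: (A) the compactness, up to space translations, of \emph{every} minimizing sequence of \eqref{def-I} (resp.\ of \eqref{def-K}), which I would prove separately; and (B) the uniqueness result of Lemma \ref{theo2}, already at our disposal. Granting these, orbital stability follows by a contradiction argument built on the conservation laws of \eqref{vm} and on the fact that the Vlasov flow preserves the Lebesgue measure on $\R^6$ (so that $f(t)$ is equimeasurable with $f_0$), carried out on the maximal interval of existence --- so whether or not $T=+\infty$ plays no role.

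\emph{Case (i).} If the conclusion fails for some $\varepsilon_0>0$, there are data $f_0^n\to Q$ in $\mathcal E_j$, classical solutions $f^n$ on $[0,T_n)$, and times $t_n\in[0,T_n)$ with $\inf_{x_0}\|f^n(t_n,\cdot+x_0,\cdot)-Q\|_{\mathcal E_j}\ge\varepsilon_0$. Set $g_n=f^n(t_n)$. By conservation, $\|g_n\|_{L^1}=\|f_0^n\|_{L^1}\to M_1$ and $\|j(g_n)\|_{L^1}=\|j(f_0^n)\|_{L^1}\to M_j$, and, using the continuity of $\mathcal H$ on $\mathcal E_j$ (a consequence of \eqref{ineg-inter-P}--\eqref{ineg-inter-M}), $\mathcal H(g_n)=\mathcal H(f_0^n)\to\mathcal H(Q)=I(M_1,M_j)$; since $\delta>0$ and \eqref{subcrit} holds, $(g_n)$ is moreover bounded in $\mathcal E_j$ and $I(M_1,M_j)<0$. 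A rescaling with factors tending to $1$ brings $(g_n)$ onto $\mathcal F(M_1,M_j)$ without changing the limit of $\mathcal H$, producing a minimizing sequence; by (A), up to a subsequence and translations it converges strongly in $\mathcal E_j$ to a minimizer $\bar Q$ of \eqref{def-I}, which is a steady state (any such minimizer solves the Euler--Lagrange equation). Now $\bar Q$ is equimeasurable with $Q$: $g_n$ is equimeasurable with $f_0^n$ (measure-preservation of the flow), the distribution functions of $f_0^n$ converge to that of $Q$ (since $f_0^n\to Q$ in $L^1$), and the rescaling and strong convergence propagate this to $\bar Q$. By Lemma \ref{theo2}(i), $\bar Q=Q$ up to a translation; undoing the rescaling, $\|g_n(\cdot+y_n,\cdot)-Q\|_{\mathcal E_j}\to0$ for suitable $y_n$, contradicting the choice of $t_n$.

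\emph{Case (ii).} The pure Manev system is invariant under $f(x,v)\mapsto f(\mu x,v/\mu)$, a transformation which is measure-preserving on $\R^6$, leaves $\|\cdot\|_{L^1}$, $\|j(\cdot)\|_{L^1}$ and $K$ unchanged, and multiplies $\||v|^2\cdot\|_{L^1}$, $E_{pot}^M$ and $\mathcal H$ by $\mu^2$. Repeating the contradiction argument, I would replace $f^n(t_n)$ by $\hat f_n:=f^n(t_n,\lambda(t_n)\,\cdot,\cdot/\lambda(t_n))$: by the definition of $\lambda(t)$ this normalizes the kinetic energy to $\||v|^2Q\|_{L^1}$; hypothesis (b) becomes $\mathcal H(\hat f_n)=\lambda(t_n)^2\mathcal H(f^n(t_n))\to0$, whence $E_{pot}^M(\hat f_n)\to\||v|^2Q\|_{L^1}$ and $K(\hat f_n)\to1=J(M_1,M_j)$; hypothesis (a), with Fatou, gives $\|\hat f_n\|_{L^1}\to M_1$ and $\|j(\hat f_n)\|_{L^1}\to M_j$. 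An auxiliary amplitude-and-space rescaling with factors $\to1$ produces a bona fide minimizing sequence of $J(M_1,M_j)$, so by (A) it converges, up to a subsequence and translations, strongly in $\mathcal E_j$ to a steady-state minimizer $\bar Q$ of \eqref{def-K} with $\||v|^2\bar Q\|_{L^1}=\||v|^2Q\|_{L^1}$ and equimeasurable with $Q$ (all rescalings being measure-preserving or asymptotically trivial, together with Liouville and (a)). Lemma \ref{theo2}(ii) --- whose ``same kinetic energy'' hypothesis is exactly what the normalization by $\lambda(t)$ provides --- then forces $\bar Q=Q$ up to a translation, and undoing the rescalings contradicts the choice of $t_n$.

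\emph{Main obstacle.} Everything above is routine \emph{once} one has the compactness input (A); proving (A) is the crux. I would apply Lions' concentration--compactness principle to $(\rho_{f_n})$ (after normalizing the kinetic energy, in the pure Manev case). \emph{Vanishing} is excluded because $E_{pot}(f_n)\to0$ would force $\liminf\mathcal H(f_n)\ge0>I(M_1,M_j)$ (resp.\ $K(f_n)\to\infty$). \emph{Dichotomy} is excluded by a strict sub-additivity property: in the Poisson--Manev case this is exactly where the homogeneity-breaking between the $-1/r$ and $-1/r^2$ parts of the potential, together with \eqref{subcrit}, is used crucially (the subcritical condition also yields $I>-\infty$ and the $\mathcal E_j$-boundedness of minimizing sequences); in the pure Manev case it follows from the scale-invariance of $K$. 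The surviving \emph{compactness} alternative is upgraded to strong $\mathcal E_j$-convergence by combining weak lower semicontinuity of $\mathcal H$, the convergence $\mathcal H(f_n)\to I(M_1,M_j)$, and the non-escape of the $L^1$- and $\||v|^2\cdot\|_{L^1}$-masses. One further, elementary but indispensable, point (needed to invoke Lemma \ref{theo2}) is that $L^1$-convergence of the data transfers to convergence of their distribution functions, so that measure-preservation of the flow really does make the limiting minimizer equimeasurable with $Q$; and, in case (ii), the two nested rescalings must be tracked carefully so that no relevant quantity is lost in the limit.
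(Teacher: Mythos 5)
Your proposal follows essentially the same route as the paper: a contradiction argument using conservation laws and Liouville's theorem, a concentration--compactness compactness statement for minimizing sequences (excluding vanishing by the negativity of $I$ resp.\ the boundedness of $K$, and dichotomy by strict subadditivity), and then Lemma~\ref{theo2} to identify the strong limit with $Q$. The paper packages the pure Manev argument as Proposition~\ref{propstab}, rescales $\hat f_n(x,v)=f_n(\lambda_n x, v/\lambda_n)$ exactly as you do, and invokes Lemma~\ref{theo2}(ii) with the kinetic-energy normalization that $\lambda(t)$ provides. The only place where your sketch is slightly off is the exclusion of dichotomy in the pure Manev case: you attribute it to ``scale-invariance of $K$,'' but the paper's actual mechanism (given in Proposition~\ref{propstab}) is the strict monotonicity of $J(M_1,M_j)$ in its arguments (Lemma~\ref{lemmonotone-K}), applied to the sub-masses to produce the coercive bound $\mathcal H(f_n^i)\ge C\||v|^2 f_n^i\|_{L^1}$, which then contradicts $\liminf E_{pot}(\hat f_n)>0$; scale-invariance of $K$ is what makes $J$ well-defined and drives its monotonicity, but is not by itself the subadditivity input. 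Likewise, in case~(ii), hypothesis~(a) together with conservation and Fatou gives only $\limsup_n\|j(\hat f_n)\|_{L^1}\le\|j(Q)\|_{L^1}$; the equality $\|j(\hat f)\|_{L^1}=\|j(Q)\|_{L^1}$ for the limit is then recovered \emph{a posteriori} from the strict monotonicity of $M_j\mapsto J(M_1,M_j)$, not assumed at the outset. These are minor discrepancies inside a step you explicitly flag as the crux and only sketch; the overall structure is correct and matches the paper.
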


\noindent
\begin{remark} The goal here is to prove  this stability result assuming the framework of classical solutions to the Vlasov-Manev model, and not to solve the Cauchy problem. For smooth initial data decaying fast enough at the infinity, the local existence and the uniqueness of  regular solutions to \eqref{vm} has been proved in \cite{VM2}. The global existence of classical solutions is an open problem. Our result shows that the solutions remain in the vicinity of the ground state $Q$ (up to a translation shift), but does not a priori exclude a possible blow-up of some derivative of $f$. 
\end{remark}

Notice that one may not be aimed  at a better stability than the blow-up stability in the pure Manev case. Indeed the classical stability does not hold as shown  by the following example (translating the pseudo-conformal symmetry property in this case): let $g=g(x,v)$ be a steady state of \eqref{vm} in the pure Manev case ($\delta=0$, $\kappa=1$), then the function $f_T$ defined by
$$f_T(t,x,v)=g \left( \frac{Tx}{T-t},\frac{T-t}{T}v+\frac{x}{T}\right),$$
is a blow-up solution to the system \eqref{vm} in the pure Manev case ($\delta=0$, $\kappa=1$), see \cite{VM1,VM2}. 

\bs
To go further with the pure Manev system our last result gives the existence of exact spherically symmetric self-similar solutions (we recall that a spherically symmetric function, in this context, is a function which only depends of $|x|$, $|v|$ and $x\cdot v$).

\begin{theorem}[Exact self-similar solutions in the pure Manev case]\label{thm6}  Let $Q$ be a steady state of \eqref{vm} in the pure Manev case ($\delta=0$, $\kappa=1$), which minimizes \eqref{def-K}.
Then there exists a constant $b^\ast>0$ such that for all $b\in[0,b^\ast]$, there exists a compactly supported spherically symmetric stationary profile $Q_b\in \mathcal{C}^0(\R^6)$ having the form
$$
Q_b(x,v)=F_b\left(  \frac{|v|^2}{2}+b x\cdot v + \phi_{Q_b}(x)\right)
$$
on its support, and such that, for all $T>0$, the function
\be f(t,x,v)=Q_b\left(\frac{x}{\lambda(t)},\lambda(t)v\right) \ \ \textrm{with} \ \lambda(t)=\sqrt{2b(T-t)} \label{selfsim2}\ee 
is an exact self-similar blow-up solution to the pure Manev system \eqref{vm} in $\mathcal{E}_j$. Here, the function $\phi_{Q_b}$ belongs to $\mathcal C^1$ and the function $F$ is a continuous nonnegative function on $\RR$, which is $\mathcal C^1$ on $]-\infty,e_b[$ for some $e_b<0$ and vanishes on $[e_b,+\infty[$. Moreover,  $Q_b$ converges to $Q_0=Q$ in $\mathcal{E}_j$ as $b\rightarrow 0$.
\end{theorem}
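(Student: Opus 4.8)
The plan is to look for $Q_b$ as a minimizer of a $b$-perturbed variational problem whose Euler--Lagrange equation is precisely the stationary equation satisfied by a self-similar profile. A direct substitution of the ansatz $f(t,x,v)=Q_b(x/\lambda(t),\lambda(t)v)$ with $\lambda(t)=\sqrt{2b(T-t)}$ into the pure Manev system \eqref{vm} shows — using the scaling homogeneity of the Manev potential, $\phi^M_{f(t)}(t,x)=\lambda(t)^{-1}\phi^M_{Q_b}(x/\lambda(t))$, and the fact that $\lambda\dot\lambda=-b$ — that $f$ is a solution if and only if $Q_b$ satisfies the stationary transport equation $v\cdot\nabla_x Q_b-\nabla_x\phi_{Q_b}\cdot\nabla_v Q_b + b\,(x\cdot\nabla_v Q_b - v\cdot\nabla_v Q_b + \dots)=0$; more compactly, $Q_b$ must be transported by the Hamiltonian flow of $H_b(x,v)=\tfrac{|v|^2}{2}+b\,x\cdot v+\phi_{Q_b}(x)$, which forces $Q_b=F_b(H_b(x,v))$ on its support. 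Thus I would first carry out this reduction carefully, checking that the extra drift term produced by the scaling is exactly $\nabla_v\cdot((bx - bv)\,Q_b)$-type and that it is absorbed by the shifted Hamiltonian $H_b$; this is the bookkeeping step and should cause no real trouble.

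Second, I would set up the variational problem producing such a profile. Following \cite{LMR, LMR-inv, LM3}, introduce for $b\geq0$ the functional obtained by adding to the kinetic part a term $b\int x\cdot v\, f$ (or, after a symplectic change of variables $v\mapsto v-bx$, keep the pure kinetic energy but act on a translated class), minimize $\mathcal H_b(f):=\||v|^2 f\|_{L^1}+ b\,(\text{cross term}) - E^M_{pot}(f)$ — or the associated Lagrange quotient $K_b$ — over the equimeasurability class of the given ground state $Q$, i.e. over $\{f\geq0:\ f \text{ equimeasurable with }Q\}$, possibly with the kinetic-energy normalization of Theorem \ref{theo1}(ii). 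The point of minimizing over the equimeasurability class (rather than over the two constraints $\|f\|_{L^1}=M_1$, $\|j(f)\|_{L^1}=M_j$) is that it is exactly on this class that Lemma \ref{theo2} applies and gives rigidity. For $b=0$ the minimizer is $Q$ itself (by Theorem \ref{theo1}(ii) and Lemma \ref{theo2}); for $b>0$ small, compactness of minimizing sequences is inherited from the $b=0$ analysis by a perturbation argument, using the interpolation inequality \eqref{ineg-inter-M} and the nondichotomy \eqref{j-nondicho} to rule out vanishing and dichotomy, provided $b\leq b^\ast$ for $b^\ast$ small enough that the cross term does not destroy coercivity. This yields a minimizer $Q_b$, and writing its Euler--Lagrange equation (which only requires the $\mathcal C^{1,\alpha}$ regularity of $\phi_{Q_b}$, not the fractional-Laplacian equation, exactly as in the proof of Theorem \ref{theo1}(iii)) gives the announced form $Q_b=F_b(H_b)$ with $F_b$ continuous, $\mathcal C^1$ on $(-\infty,e_b)$, vanishing on $[e_b,+\infty)$, and $\phi_{Q_b}\in\mathcal C^1$.

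Third, I would establish the convergence $Q_b\to Q$ in $\mathcal E_j$ as $b\to0$. The natural route is: the minimizing values $b\mapsto J_b$ (or $\mathcal H_b$ of the minimizer) are continuous at $b=0$; any sequence $Q_{b_n}$ with $b_n\to0$ is, by the uniform (in small $b$) bounds, a minimizing sequence for the $b=0$ problem up to $o(1)$, hence by compactness it converges in $\mathcal E_j$ along a subsequence to some $\widetilde Q$ which minimizes the $b=0$ problem and lies in the equimeasurability class of $Q$; by Lemma \ref{theo2}(ii) (pure Manev case, same kinetic energy, which is preserved in the limit thanks to the normalization) one gets $\widetilde Q=Q$ up to translation, and since a translation can be fixed by the radial normalization this identifies the full limit. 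Convergence of the whole family (not just subsequences) then follows by a standard uniqueness-of-limit argument.

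The main obstacle I anticipate is the compactness of minimizing sequences for $b>0$ under the equimeasurability constraint together with the kinetic-energy normalization: the cross term $b\int x\cdot v\,f$ is not sign-definite and is only controlled by $\||v|^2 f\|_{L^1}^{1/2}$ times a spatial second moment which is \emph{not} a priori bounded by $\mathcal E_j$, so one must either work in the symplectically shifted variables (where the functional becomes a genuine kinetic energy plus a lower-order perturbation but the equimeasurability class is transported in a $b$-dependent way) or prove an a priori spatial-localization estimate coming from the compact support of the Manev potential's source. Handling this drift term — showing it cannot cause the mass to escape to spatial infinity and that the concentration-compactness dichotomy is still excluded for all $b\in[0,b^\ast]$ — is the technical heart of the argument; once it is in place, the Euler--Lagrange computation and the $b\to0$ limit are routine given Theorem \ref{theo1} and Lemma \ref{theo2}.
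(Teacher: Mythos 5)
Your plan has the right overall architecture --- look for $Q_b$ as a minimizer over the equimeasurability class $\mathrm{Eq}(Q)$, invoke Lemma~\ref{theo2} for rigidity, and pass to the limit $b\to0$ --- but it misses the device that makes the whole construction possible, and without it the variational problem you propose has no solution in $\mathcal{E}_j$. A function of the form $F\bigl(\tfrac{|v|^2}{2}+bx\cdot v+\phi(x)\bigr)$ with $b\neq 0$ and a nontrivial profile $F$ never lies in $\mathcal{E}_j$: for each fixed $x$ with $|x|$ large, the sub-level set $\{v:\tfrac{|v|^2}{2}+bx\cdot v+\phi(x)<e\}$ is a ball of radius roughly $2b|x|$ centered at $-bx$, whose volume grows like $|x|^3$, so the spatial density associated with such a profile is bounded away from zero on all of $\RR^3$ and the mass is infinite. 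Consequently any Euler--Lagrange analysis that delivers a minimizer of the form $F(\tfrac{|v|^2}{2}+bx\cdot v+\phi)$ on its full support is self-defeating; the same problem defeats the idea of controlling the cross term $b\int x\cdot v\,f$ by ``a priori spatial localization'', since the candidate ground states themselves fail to be localized. The paper resolves this by replacing $bx\cdot v$ with $b\chi(x)x\cdot v$ for a compactly supported radial cut-off $\chi$ ($\chi\equiv1$ on $B(0,r_\chi)$, $\chi\equiv0$ outside $B(0,2r_\chi)$), which makes the modified microscopic energy coercive in $v$ uniformly in $x$, and then verifies \emph{a posteriori} (by choosing $r_\chi$ in terms of $Q$, see Step~6 of Section~\ref{subsectselfsim}) that the minimizer's support sits inside $\{|x|<r_\chi\}$, where the cut-off is transparent, so that $Q_b$ genuinely solves the self-similar stationary equation.

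Two further points where the paper deviates from your route and which are worth noting. First, the paper does not minimize a Hamiltonian-like quantity $\||v|^2f\|_{L^1}+b(\cdots)-E^M_{\rm pot}(f)$; it minimizes only $T_b(f)=\int(\tfrac{|v|^2}{2}+b\chi(x)x\cdot v)f$ over $\mathrm{Eq}(Q)$ with the \emph{constraint} $E_{\rm pot}(f)=E_{\rm pot}(Q)$, which keeps the problem linear in $f$ and makes the rearrangement argument clean. Second, the compactness of minimizing sequences is not obtained by concentration-compactness on the raw sequence as you suggest; instead, for a given minimizing sequence $(f_n^b)$ one constructs the symmetrizations $Q^{\ast b,\nu_n^b\phi_{f_n^b}}$ with respect to the modified microscopic energy (Lemmas~\ref{jacobian} and \ref{rearrangement}), which are explicit decreasing functions of $\tfrac{|v|^2}{2}+b\chi(x)x\cdot v+\nu_n^b\phi_{f_n^b}(x)$, are equimeasurable with $Q$, decrease $T_b$, and converge strongly; this ``replace the minimizing sequence by its rearrangement'' step does the work that concentration-compactness would do, and it also delivers the Euler--Lagrange form of $Q_b$ directly without differentiating the constraints. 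The nontrivial part of that step, Lemma~\ref{trouve-bast}, selects the multiplier $\nu_n^b$ so that the potential-energy constraint is restored after rearrangement, and its proof uses Proposition~\ref{propstab} --- so the stability result feeds into the existence of self-similar profiles, not just the other way around. Without incorporating the cut-off and this rearrangement mechanism, your compactness step would not close, so the gap is real and not merely a difference of presentation.
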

\bs
\begin{remark}
The previous results show that, in the pure Manev case ($\delta=0$, $\kappa=1$), around the ground state $Q$, there are at least three classes of dynamical profiles.\\
(i) {\em Subcritical solutions.} When the initial data $f_0$ is subcritical, i.e. when
$$J\left(\|f_0\|_{L^1},\|j(f_0)\|_{L^1}\right)>1=J\left(\|Q\|_{L^1},\|j(Q)\|_{L^1}\right),$$
then the kinetic energy of the solution $f(t)$ is controlled for all time. Indeed, one has
$$\calH(f(t))\geq \||v|^2f\|_{L^1}\left(1-\frac{1}{K(f(t))}\right)\geq \||v|^2f\|_{L^1}\left(1-\frac{1}{J\left(\|f_0\|_{L^1},\|j(f_0)\|_{L^1}\right)}\right).$$
Recall that $J$ is defined by \fref{def-K} and is continuous and decreasing with respect to its two  arguments.
\\
(ii) {\em Pseudo-conformal blow-up solutions.} The following family gives an explicit class of finite time blow-up solutions \cite{VM1,VM2}:
$$f(t,x,v)=Q\left( \frac{Tx}{T-t},\frac{T-t}{T}v+\frac{x}{T}\right),\qquad T>0.$$
Note that the kinetic energy blows up with the rate $(T-t)^{-2}.$
\\
(iii) {\em Self-similar blow-up solutions.} The family given by \fref{selfsim2} blows up in finite time and the kinetic energy blows up with the rate $(T-t)^{-1}.$
\end{remark}
The outline of the paper is as follows. Section \ref{sect1} deals with the proof of Theorem \ref{theo1}. After preliminary technical results concerning some properties of the infimum $I(M_1,M_j)$  (Subsection \ref{subsect1}), we prove in Subsection \ref{subsect2} the existence of minimizers. Then we characterize the ground states: Euler-Lagrange equation, regularity and spherical symmetry. Section \ref{sect2} is devoted to the proof of stability of the ground state through the Vlasov-Manev flow as stated in Theorem \ref{theo3}. First, in  Subsection \ref{subsect3}, we prove the uniqueness of the ground state in the class of equimeasurable functions, Lemma \ref{theo2}. Then we use standard concentration-compactness arguments to prove the compactness of minimizing sequences. Combining the uniqueness and compactness properties, we finally deduce the orbital stability result, Theorem \ref{theo3}. Section \ref{sectselfsim} is devoted to the proof of the Theorem \ref{thm6}: in Subsection \ref{subsectrearrang} we introduce the rearrangement with respect of a modified microscopic energy and apply it in Subsection \ref{subsectselfsim} to build self-similar solutions of \eqref{vm} in the pure Manev case $\delta=0$, $\kappa=1$.

\section{Existence of ground states}
\label{sect1}
This section is devoted to the proof of Theorem \ref{theo1}.
\subsection{Properties of the infimum}
\label{subsect1}

In this section, we prove two lemmas concerning some monotonicity properties of the infimum defined by \eqref{def-I} and by \eqref{def-K}.
\begin{lemma} [Monotonicity properties of the infimum $I(M_1,M_j)$]
\label{lemmonotone}
Let $j$ be a real-valued function  satisfying Assumptions {\rm (H1)} and {\rm (H2)}, let $M_1>0$ and $M_j>0$ such that \eqref{subcrit} holds, and let $I(M_1,M_j)$ be defined by \eqref{def-I} in the case $\delta>0$. Then we have
\be -\infty < I(M_1,M_j) <0. \label{encad-I} \ee
Moreover the following nondichotomy condition holds true: for all $0<\alpha <1$ and $0\leq \beta \leq 1$,
\be I(\alpha M_1,\beta M_j)+I((1-\alpha)M_1,(1-\beta)M_j)>I(M_1,M_j). \label{nondicho-I}\ee
\end{lemma}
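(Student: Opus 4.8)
The plan is to establish the three inequalities separately. For the lower bound $I(M_1,M_j)>-\infty$, I would use the interpolation inequality \eqref{ineg-inter-P} for $E_{pot}^P$ and \eqref{ineg-inter-M} for $E_{pot}^M$ combined with the subcritical condition \eqref{subcrit}. Specifically, for $f\in\mathcal{F}(M_1,M_j)$ one has $\|f\|_{L^1}=M_1$, $\|j(f)\|_{L^1}=M_j$, and since $t+j(t)\geq Ct^p$ we control $\|f\|_{L^p}$ by $M_1+M_j$. Writing $T(f)=\||v|^2f\|_{L^1}$, the bound \eqref{ineg-inter-M} together with \eqref{def-KM} gives $\kappa E_{pot}^M(f)\leq \kappa M_1^{\frac{p-3}{3(p-1)}}(M_1+M_j)^{\frac{2}{3(p-1)}}\,T(f)/K_j^M =: \theta\, T(f)$ with $\theta<1$ by \eqref{subcrit}, while $\delta E_{pot}^P(f)\leq C\,T(f)^{1/2}$ for a constant depending on $M_1,M_j$. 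Hence $\mathcal{H}(f)=T(f)-E_{pot}(f)\geq (1-\theta)T(f)-C\,T(f)^{1/2}$, which is bounded below (uniformly in $f\in\mathcal{F}(M_1,M_j)$) by minimizing the scalar function $s\mapsto(1-\theta)s^2-Cs$ over $s\geq0$. This gives \eqref{encad-I} on the left.

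For the upper bound $I(M_1,M_j)<0$, I would exhibit a minimizing-type test function and use a scaling argument that exploits the homogeneity-breaking between the Poisson and Manev potentials. Take any fixed $f\in\mathcal{F}(M_1,M_j)$ with $E_{pot}^M(f)>0$ (such $f$ exists, e.g.\ a smooth compactly supported bump, and $E_{pot}^M(f)>0$ always holds when $f\not\equiv0$ since the Manev kernel is positive). Consider the rescaling $f_\sigma(x,v)=f(\sigma x,v)$ which preserves $\|f_\sigma\|_{L^1}$ and $\|j(f_\sigma)\|_{L^1}$ — wait, it does not; instead I would use $f_\sigma(x,v)=f(x,\sigma v)$ type scalings or, more carefully, a two-parameter family $f_{a,\sigma}(x,v)=a\,f(\sigma x, v)$ and adjust $a,\sigma$ to keep both constraints fixed while sending the spatial scale to infinity (dilation in $x$). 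Under spatial dilation $x\mapsto x/R$ with compensating amplitude to fix $\|f\|_{L^1}$ and $\|j(f)\|_{L^1}$, the kinetic term $T$ stays of order one (or can be kept bounded), $E_{pot}^P$ scales like $R^{-1}$ (Newtonian, homogeneity $-1$) and $E_{pot}^M$ scales like $R^{-2}$ — no, the opposite: the slower-decaying potential dominates. The point is that by choosing the scaling appropriately one of the potential energies can be made to dominate $T$, forcing $\mathcal{H}<0$. I would pick the scaling so that $E_{pot}^M$ (homogeneity $-2$ kernel, i.e.\ the Manev part) is the relevant one and verify $\kappa E_{pot}^M(f_{a,\sigma})-T(f_{a,\sigma})-\delta E_{pot}^P(f_{a,\sigma})>0$ for a suitable choice; this is where the subcritical condition is used in the reverse direction is not needed — only positivity of $\kappa E_{pot}^M$ matters. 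Concretely, set $f^\sigma(x,v)=\sigma^{a}f(\sigma x,\sigma^{b}v)$ and solve for exponents keeping $\|f^\sigma\|_{L^1}$ and $\|j(f^\sigma)\|_{L^1}$ constant; then track the powers of $\sigma$ in $T$, $E_{pot}^P$, $E_{pot}^M$ and let $\sigma\to0$ or $\sigma\to\infty$. I expect this bookkeeping of scaling exponents to be the main technical obstacle, since $j$ is not homogeneous, so one must replace a genuine scaling by an approximate one (using the nondichotomy bound \eqref{j-nondicho} to control $\|j(\sigma^a f)\|_{L^1}$ between $\sigma^{ap}\|j(f)\|_{L^1}$ and $\sigma^{aq}\|j(f)\|_{L^1}$, then readjust the amplitude).

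For the strict subadditivity (nondichotomy) \eqref{nondicho-I}, the standard argument is: given $0<\alpha<1$, $0\leq\beta\leq1$, pick near-minimizers $f_1\in\mathcal{F}(\alpha M_1,\beta M_j)$ and $f_2\in\mathcal{F}((1-\alpha)M_1,(1-\beta)M_j)$ for the respective problems, translate $f_2$ far away from $f_1$ in $x$-space so that the cross terms in $E_{pot}(f_1+\tau_R f_2)$ (both the Poisson and Manev cross interactions, which are nonnegative since the kernels are positive) are positive and do not vanish, and observe $f_1+\tau_R f_2\in\mathcal{F}(M_1,M_j)$. Then $\mathcal{H}(f_1+\tau_R f_2)=\mathcal{H}(f_1)+\mathcal{H}(f_2)-(\text{positive cross term})$, so $I(M_1,M_j)\leq \mathcal{H}(f_1)+\mathcal{H}(f_2)-c$ for some $c>0$ independent of how close $f_1,f_2$ are to their infima; letting $f_1,f_2$ approach the infima and noting that the cross term can be bounded below uniformly (by fixing the translation distance $R$ first and using that $E_{pot}^M$-type cross interactions between two fixed-mass bumps at distance $R$ is at least some explicit positive quantity), we get $I(M_1,M_j)<I(\alpha M_1,\beta M_j)+I((1-\alpha)M_1,(1-\beta)M_j)$. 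The care needed here is to keep the cross term bounded below independently of the approximating sequences; this is handled by first fixing compactly supported near-minimizers, then choosing $R$.
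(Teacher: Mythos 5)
Your argument for $I(M_1,M_j)>-\infty$ is exactly the paper's and is correct: interpolation plus the subcritical condition gives $\mathcal H(f)\geq(1-\theta)\,T(f)-C\,T(f)^{1/2}$, which is bounded below.

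For $I(M_1,M_j)<0$ your reasoning goes astray. The right rescaling is $\tilde f(x,v)=f(x/\lambda,\lambda v)$, which preserves both $\|f\|_{L^1}$ and $\|j(f)\|_{L^1}$ \emph{exactly} (no amplitude factor, so the non-homogeneity of $j$ is not an obstacle, contrary to your worry). Under this scaling $\||v|^2\tilde f\|_{L^1}=\lambda^{-2}T(f)$, $E_{pot}^P(\tilde f)=\lambda^{-1}E_{pot}^P(f)$ and $E_{pot}^M(\tilde f)=\lambda^{-2}E_{pot}^M(f)$, so as $\lambda\to\infty$ the \emph{Poisson} part decays slowest and $\mathcal H(\tilde f)\sim -\lambda^{-1}\delta E_{pot}^P(f)<0$. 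Your claim that ``only positivity of $\kappa E_{pot}^M$ matters'' has it backwards: the kinetic and Manev terms share the same $\lambda^{-2}$ homogeneity, so neither can beat the other by scaling (and the subcritical condition makes the Manev part the loser anyway). It is $\delta>0$ and the $\lambda^{-1}$ Poisson scaling that make the Hamiltonian negative. With your scaling in mind (trying to make the Manev term dominate the kinetic term) the argument fails, precisely because of the subcritical condition you imposed in Step~1.

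For the strict subadditivity \eqref{nondicho-I}, your ``far separation'' argument has a genuine gap, and you have put your finger on it yourself. Given near-minimizers $f_1\in\mathcal F(\alpha M_1,\beta M_j)$ and $f_2\in\mathcal F((1-\alpha)M_1,(1-\beta)M_j)$ with $\mathcal H(f_i)\leq I(\cdot)+\eps$, translating $f_2$ far away gives $I(M_1,M_j)\leq\mathcal H(f_1)+\mathcal H(f_2)-c$, where $c$ is the cross potential energy. But $c$ is \emph{upper} bounded by $m_1m_2\,g(R)$ for a distance-$R$ separation, not lower bounded, unless the spatial supports of $f_1,f_2$ are uniformly controlled; near-minimizers could spread out as $\eps\to0$, driving $c\to 0$. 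A uniform lower bound on the cross term would require exactly the kind of a priori concentration estimate that the nondichotomy lemma is being used to \emph{establish}, so the argument is circular without further input. The paper avoids this entirely: it proves the pure scaling inequalities
$I(M_1,kM_j)\geq k^{\frac{1}{3(q-1)}}I(M_1,M_j)$ and $I(kM_1,M_j)\geq k^{\frac{4p-6}{3(p-1)}}I(M_1,M_j)$ for $k\in(0,1]$ (again using the Casimir-preserving rescaling and the nondichotomy bound \eqref{j-nondicho}), then reduces \eqref{nondicho-I} to the elementary scalar inequality $\alpha^{a}\beta^{b}+(1-\alpha)^{a}(1-\beta)^{b}<1$ with $a=\tfrac{4p-6}{3(p-1)}>1$, $b=\tfrac{1}{3(q-1)}>0$, combined with $I(M_1,M_j)<0$. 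This route needs no control on the shape of near-minimizers and is what you should aim for.
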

\begin{proof}[Proof. ]

\textsl{Step 1. The infimum is finite and negative.}

\bs
\ni
We first prove \eqref{encad-I}. Let $f\in \mathcal{F}(M_1,M_j)$. Then from \eqref{ineg-inter-P} and \eqref{def-KM}, we have
\be \begin{array}{cl}
\mathcal{H}(f) & \dis \geq \left\| |v|^{2}f\right\|_{L^1} -\frac{\kappa}{K^M_j}\left\| |v|^{2}f\right\|_{L^1}
M_1^{\frac{p-3}{3(p-1)}}(M_1+M_j)^{\frac{2}{3(p-1)}}\\
&\dis \hspace*{2.1cm} - C\delta \left\| |v|^{2}f\right\|_{L^1}^{1/2}
M_1^{\frac{7p-9}{6(p-1)}}(M_1+M_j)^{\frac{1}{3(p-1)}} \\ 
 &\dis\hspace*{-1cm} \geq \left\| |v|^{2}f\right\|_{L^1} \left(1-\frac{\kappa}{K^M_j}M_1^{\frac{p-3}{3(p-1)}}(M_1+M_j)^{\frac{2}{3(p-1)}}\right)- C_{M_1,M_j} \left\| |v|^{2}f\right\|_{L^1}^{1/2} \label{controlkinetic}.
\end{array}
\ee 
Now the  subcritical condition \eqref{subcrit} implies that $$1-\frac{\kappa}{K^M_j}M_1^{\frac{p-3}{3(p-1)}}(M_1+M_j)^{\frac{2}{3(p-1)}}>0.$$
Thus $\mathcal{H}(f)$ is bounded from below, which proves that $I(M_1,M_j)$ is finite. To prove that $I(M_1,M_j)$ is negative, we use a rescaling argument. For $\lambda>0$ and $f\in \mathcal{F}(M_1,M_j)$, we consider the rescaled function $\tilde{f}(x,v)=f(\frac{x}{\lambda},\lambda v)$. Then $\tilde{f}$ belongs to $\mathcal{F}(M_1,M_j)$ and we have (see Appendix A)
$$\begin{array}{ccl}
\mathcal{H}(\tilde{f}) & = & \dis \frac{1}{\lambda^2}\left\| |v|^{2}f\right\|_{L^1} - \frac{\delta}{\lambda}  E_{pot}^P(f)-\frac{\kappa}{\lambda^2} E_{pot}^M(f) \\ \\
 & \sim & \dis - \frac{1}{\lambda} E_{pot}^P(f)\ \ \textrm{as}\ \lambda \rightarrow +\infty,
\end{array}$$  
where $E_{pot}^P(f)$ is positive (since $f$ is not zero). The property \eqref{encad-I} follows.\\

\noindent
\textsl{Step 2. The nondichotomy condition.}
\nopagebreak

\bs
\ni
We now claim the following monotonicity properties: for all $0<k\leq 1$,
\be I(M_1,kM_j) \geq k^{\frac{1}{3(q-1)}}I(M_1,M_j) \label{I-supj}\ee
and
\be I(kM_1,M_j) \geq k^{\frac{4p-6}{3(p-1)}}I(M_1,M_j). \label{I-sup1}\ee 
{\em Proof of \eqref{I-supj}.} Let $k \in (0,1]$ and $f\in \mathcal{F}(M_1,kM_j)$. From Appendix \ref{appA}, consider the unique rescaled function $\tilde{f}(x,v)=\alpha f(\alpha^{1/3}x,v)$ in $ \mathcal{F}(M_1,M_j)$. From \eqref{lambda-gamma}, we deduce in particular that $\alpha\geq 1$ and that
$$\alpha^{p-1}\leq \frac{1}{k}\leq \alpha^{q-1}.$$
Then, we get
$$ \mathcal{H}(\tilde{f}) = \left\| |v|^{2}f\right\|_{L^1} - \alpha^{\frac{1}{3}} \delta E_{pot}^P(f) - \alpha^{\frac{2}{3}}\kappa E_{pot}^M(f) \leq \left\| |v|^{2}f\right\|_{L^1} - \alpha^{\frac{1}{3}}\left(\delta E_{pot}^P(f) + \kappa E_{pot}^M(f)\right)$$
and
$$ I(M_1,M_j)\leq\mathcal{H}(\tilde{f}) \leq \left\| |v|^{2}f\right\|_{L^1} - \left(\frac{1}{k}\right)^{\frac{1}{3(q-1)}}\left(\delta E_{pot}^P(f) + \kappa E_{pot}^M(f)\right) \leq  \left(\frac{1}{k}\right)^{\frac{1}{3(q-1)}}\mathcal{H}(f).$$
This result holds for all $f\in \mathcal{F}(M_1,kM_j)$ and $k \in (0,1]$, which proves \eqref{I-supj}.

\ms
\ni
{\em Proof of \eqref{I-sup1}.} Similarly, we take $f\in\mathcal{F}(kM_1,M_j)$ and set 
$$\tilde{f}(x,v)=\alpha f(\alpha^{1/3}k^{1/3}x,v)$$
the unique rescaled function in  $\mathcal{F}(M_1,M_j)$, which implies that $\alpha\leq 1$ with
$$\alpha^{q-1}\leq k\leq \alpha^{p-1}.$$
Thus, 
$$ \begin{array}{ccl}
\mathcal{H}(\tilde{f})& = & \dis \frac{1}{k}\left\| |v|^{2}f\right\|_{L^1} - \alpha^{\frac{1}{3}}k^{-5/3}\delta E_{pot}^P(f) - \alpha^{\frac{2}{3}}k^{-4/3}\kappa E_{pot}^M(f) \\
 & \leq & \dis \frac{1}{k}\left\| |v|^{2}f\right\|_{L^1} - \left(\frac{1}{k}\right)^{\frac{5p-6}{3(p-1)}}\delta E_{pot}^P(f) -\left(\frac{1}{k}\right)^{\frac{4p-6}{3(p-1)}} \kappa E_{pot}^M(f).
\end{array} $$
{}From $k\leq1$ and $p>3$, we conclude that
$$ I(M_1,M_j)\leq \mathcal{H}(\tilde{f}) \leq \left(\frac{1}{k}\right)^{\frac{4p-6}{3(p-1)}}\mathcal{H}(f),$$
and \eqref{I-sup1} follows.

We now prove \eqref{nondicho-I}. Let $0<\alpha <1$ and $0\leq \beta \leq 1$. Then \eqref{I-supj} and \eqref{I-sup1} imply
$$I(\alpha M_1,\beta M_j)\geq \alpha^{\frac{4p-6}{3(p-1)}}\beta^{\frac{1}{3(q-1)}}I(M_1,M_j),$$
and a similar inequality with $(1-\alpha)$ and $(1-\beta)$. As we have $I(M_1,M_j)<0$, we only have to show that
$$ \alpha^{\frac{4p-6}{3(p-1)}}\beta^{\frac{1}{3(q-1)}}+(1-\alpha)^{\frac{4p-6}{3(p-1)}}(1-\beta)^{\frac{1}{3(q-1)}}<1,$$
which holds true since $q>1$ and $\frac{4p-6}{3(p-1)}>1$. The proof of Lemma \ref{lemmonotone} is complete.
\end{proof} 

Now we state the second lemma concerning the pure Manev case.
\begin{lemma} [Monotonicity properties of the infimum $J(M_1,M_j)$]
\label{lemmonotone-K}
Let $j$ be a real-valued function  satisfying Assumptions {\rm (H1)} and {\rm (H2)}, let $M_1, M_j>0$ and let $J(M_1,M_j)$ be defined by \eqref{def-K} in the case $\delta=0$. Then for all $0<\alpha\leq 1$ we have
\be  \alpha^{\frac{3-p}{3(p-1)}}J(M_1, M_j)\leq J(\alpha M_1,M_j) \leq  \alpha^{\frac{3-q}{3(q-1)}}J(M_1,M_j), \label{control-KB1} \ee
\be  \alpha^{-\frac{2}{3(q-1)}}J(M_1, M_j)\leq J(M_1,\alpha M_j) \leq  \alpha^{-\frac{2}{3(p-1)}}J(M_1,M_j), \label{control-KB2} \ee
In particular, the function $(M_1,M_j) \mapsto J(M_1,M_j)$ is continuous.
\end{lemma}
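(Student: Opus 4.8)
The plan is to argue by rescaling, exactly as in the proof of Lemma~\ref{lemmonotone}, the essential simplification in the pure Manev case being that the ratio $K$ obeys a very simple scaling law. (Throughout, recall from \eqref{j-nondicho} that $t+j(t)\ge Ct^{p}$, so that $\mathcal E_{j}\hookrightarrow L^{p}$ and hence, by \eqref{ineg-inter-M}, $0<E_{pot}^{M}(f)<+\infty$ on $\mathcal E_{j}\setminus\{0\}$; one also checks readily that $0<J(M_{1},M_{j})<+\infty$, the positivity being inherited from $K^{M}_{j}>0$ via the identity $K(f)=K^{M}_{j}(f)\,M_{1}^{-\frac{p-3}{3(p-1)}}(M_{1}+M_{j})^{-\frac{2}{3(p-1)}}$ valid on $\mathcal F(M_{1},M_{j})$.)

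The first step is to record the effect of the two-parameter rescaling $g_{\mu,\nu}(x,v)=\mu f(\nu x,v)$, $\mu,\nu>0$. A change of variables gives $\|g_{\mu,\nu}\|_{L^{1}}=\mu\nu^{-3}\|f\|_{L^{1}}$, $\|j(g_{\mu,\nu})\|_{L^{1}}=\nu^{-3}\|j(\mu f)\|_{L^{1}}$, $\||v|^{2}g_{\mu,\nu}\|_{L^{1}}=\mu\nu^{-3}\||v|^{2}f\|_{L^{1}}$, and $E_{pot}^{M}(g_{\mu,\nu})=\mu^{2}\nu^{-4}E_{pot}^{M}(f)$ (since $\rho_{g_{\mu,\nu}}(x)=\mu\rho_{f}(\nu x)$ and $|x-y|^{-2}$ is $(-2)$-homogeneous), whence the key identity $K(g_{\mu,\nu})=\frac{\nu}{\mu}K(f)$. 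I also use, as in Appendix~A and the proof of Lemma~\ref{lemmonotone}, that for fixed $f\in\mathcal E_{j}\setminus\{0\}$ the map $h_{f}(\mu):=\mu^{-1}\|j(\mu f)\|_{L^{1}}$ is a continuous strictly increasing bijection of $(0,+\infty)$ onto itself with $h_{f}(1)=\|j(f)\|_{L^{1}}$ (monotonicity from the convexity of $j$ with $j(0)=j'(0)=0$), and that \eqref{j-nondicho} gives $b^{p-1}\|j(f)\|_{L^{1}}\le h_{f}(b)\le b^{q-1}\|j(f)\|_{L^{1}}$ for $b\ge1$, with the reversed chain for $0<b\le1$.

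Next I would prove \eqref{control-KB1} in both directions. For the lower bound, fix $0<\alpha\le1$, take any $f\in\mathcal F(\alpha M_{1},M_{j})$, and seek $g=g_{\mu,\nu}\in\mathcal F(M_{1},M_{j})$: matching masses forces $\nu^{3}=\alpha\mu$, and then the constraint $\|j(g)\|_{L^{1}}=M_{j}$ reads $h_{f}(\mu)=\alpha\|j(f)\|_{L^{1}}$, which has a unique solution $\mu\in(0,1]$, for which the nondichotomy sandwich gives $\alpha^{1/(p-1)}\le\mu\le\alpha^{1/(q-1)}$. Therefore $K(g)=\alpha^{1/3}\mu^{-2/3}K(f)\le\alpha^{1/3-2/(3(p-1))}K(f)=\alpha^{\frac{p-3}{3(p-1)}}K(f)$; passing to the infimum over $f\in\mathcal F(\alpha M_{1},M_{j})$ yields $J(M_{1},M_{j})\le\alpha^{\frac{p-3}{3(p-1)}}J(\alpha M_{1},M_{j})$, i.e.\ the left inequality of \eqref{control-KB1}. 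The upper bound is the mirror image: from $f\in\mathcal F(M_{1},M_{j})$ I build $g=g_{\mu,\nu}\in\mathcal F(\alpha M_{1},M_{j})$ with $\nu^{3}=\mu/\alpha$ and $h_{f}(\mu)=\alpha^{-1}\|j(f)\|_{L^{1}}$, so $\mu\in[1,+\infty)$ with $\alpha^{-1/(q-1)}\le\mu\le\alpha^{-1/(p-1)}$, whence $K(g)=\alpha^{-1/3}\mu^{-2/3}K(f)\le\alpha^{-1/3+2/(3(q-1))}K(f)=\alpha^{\frac{3-q}{3(q-1)}}K(f)$, and infimizing gives $J(\alpha M_{1},M_{j})\le\alpha^{\frac{3-q}{3(q-1)}}J(M_{1},M_{j})$. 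Estimate \eqref{control-KB2} is obtained the same way using the mass-preserving subfamily $\nu^{3}=\mu$, for which $K(g_{\mu,\mu^{1/3}})=\mu^{-2/3}K(f)$: mapping $\mathcal F(M_{1},\alpha M_{j})$ into $\mathcal F(M_{1},M_{j})$ forces $h_{f}(\mu)=\alpha^{-1}\|j(f)\|_{L^{1}}$, hence $\mu\ge1$ and $\mu\ge\alpha^{-1/(q-1)}$, which gives the left inequality; mapping $\mathcal F(M_{1},M_{j})$ into $\mathcal F(M_{1},\alpha M_{j})$ forces $h_{f}(\mu)=\alpha\|j(f)\|_{L^{1}}$, hence $\mu\le1$ and $\mu\ge\alpha^{1/(p-1)}$, which gives the right inequality.

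Finally, continuity of $J$ follows by chaining the two estimates: applying \eqref{control-KB1} with second argument $\beta M_{j}$ and then \eqref{control-KB2}, one obtains for all $0<\alpha,\beta\le1$
\[
\alpha^{\frac{3-p}{3(p-1)}}\beta^{-\frac{2}{3(q-1)}}J(M_{1},M_{j}) \le J(\alpha M_{1},\beta M_{j}) \le \alpha^{\frac{3-q}{3(q-1)}}\beta^{-\frac{2}{3(p-1)}}J(M_{1},M_{j}),
\]
together with the symmetric bounds when $\alpha$ or $\beta$ exceeds $1$ (apply the same estimates based at the perturbed point); since the exponents are continuous in $(\alpha,\beta)$ and vanish at $(1,1)$, both bounds tend to $J(M_{1},M_{j})$ as $(\alpha,\beta)\to(1,1)$, which proves continuity on $(0,+\infty)^{2}$. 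The only point of the proof that is not mere bookkeeping is the solvability step -- showing that for each $f$ in the source constraint set the dilation parameter $\mu$ can genuinely be chosen so that $g_{\mu,\nu}$ meets the target constraints, and extracting from \eqref{j-nondicho} the sharp two-sided bounds on $\mu$; this is where Assumptions~(H1)--(H2) really enter, and it is what fixes the precise exponents appearing in \eqref{control-KB1}--\eqref{control-KB2}.
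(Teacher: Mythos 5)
Your proof is correct and follows essentially the same route as the paper's: the two-parameter rescaling $\mu f(\nu x,v)$ that preserves the two $L^1$-type constraints (the paper invokes Lemma~\ref{lemresc} and the identities of Appendix~A where you re-derive the existence/uniqueness of the scaling parameter via the increasing function $h_f$), the scaling law $K(g_{\mu,\nu})=(\nu/\mu)K(f)$, and the sandwich on the dilation parameter coming from the nondichotomy condition \eqref{j-nondicho}. You spell out the two directions of each inequality and the derivation of continuity from the two-sided bounds, which the paper leaves implicit, but the underlying argument is identical.
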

\begin{proof}
Let $M_1, M_j>0$ and $\alpha\in (0,1]$. Let $f\in\mathcal{F}(\alpha M_1, M_j)$ and let 
$$\tilde{f}(x,v)=\gamma f\left(\frac{\gamma^{1/3}}{\lambda^{1/3}}x,v\right)$$
be the unique rescaled function in $\mathcal{F}(M_1,M_j)$ given by Lemma \ref{lemresc}. Then we deduce from \fref{lambda-gamma} that $\lambda=\frac{1}{\alpha}$ and, since $\alpha<1$, we have also
$$\gamma^{q-1}\leq \alpha\leq \gamma^{p-1}.$$
Moreover, we also deduce from the rescaling identities of Appendix A that
$$K(\tilde{f})=\alpha^{1/3}\gamma^{-2/3}K(f).$$
This yields \fref{control-KB1}. The inequality \fref{control-KB2} is obtained similarly.
\end{proof}
\subsection{Proof of Theorem \ref{theo1}}\label{subsect2}

We are now ready to prove Theorem 1.1 which concerns the existence of the  minimizers and some of their properties. 

\bs
\ni
\textsl{Step 1. Existence of a minimizer.}
\nopagebreak

\bs
\ni
{\sl The Poisson-Manev case} ($\delta >0$).
\nopagebreak

\ms
\ni
Let $M_1,M_j>0$. From Lemma \ref{lemmonotone}, we know that $I(M_1,M_j)$ is finite. Consider a minimizing sequence $f_n$ of \eqref{def-I}:
\be \|f_n\|_{L^1}=M_1,\ \ \|j(f_n)\|_{L^1}=M_j\ \ \mbox{ and }\ \ \lim_{n\rightarrow+\infty}\mathcal{H}(f_n)=I(M_1,M_j). \label{hyp-fn}\ee
At fixed $n$, we denote by $f_n^{\ast x}$ the standard Schwarz rearrangement of $f_n$ with respect to the variable $x$. From the Riesz inequality, we have $E_{pot}(f_n^{\ast x}) \geq  E_{pot}(f_n)$. Thus we have $\mathcal{H}(f_n^{\ast x})\leq \mathcal{H}(f_n)$ and we may assume that the sequence $f_n$ is spherically symmetric in space.

We now observe that the sequence $(f_n)$ is bounded in $\mathcal{E}_j$. Indeed, from the subcritical condition \eqref{subcrit}, the kinetic energy of $f_n$ is controlled by the inequality \eqref{controlkinetic}. Thus, from Lemma \ref{propC2} of Appendix B, there exists $f\in \mathcal{E}_j$ such that 
$$f_n \rightharpoonup f \ \ \textrm{in}\ \ L^p(\R^6) \ \ \textrm{and} \ \ E_{pot}(f_n)\rightarrow E_{pot}(f).$$
By lower semi-continuity, we then have $\mathcal{H}(f)\leq I(M_1,M_j)$ which implies $f\neq 0$ since $I(M_1,M_j)<0$ (see \fref{encad-I}). Therefore there exist $0<\alpha,\beta\leq1$ such that $\|f\|_{L^1}=\alpha M_1$ and $\|j(f)\|_{L^1}=\beta M_j$. Combining this with \eqref{I-sup1} and \eqref{I-supj}, we get
$$\alpha^{\frac{4p-6}{3(p-1)}}\beta^\frac{1}{3(q-1)}I(M_1,M_j)\leq \calH(f)\leq I(M_1,M_j).$$
Hence $\alpha=\beta=1$ and $f$ is a minimizer of \eqref{def-I}.

\bs
\ni
{\sl The pure Manev case} ({$\delta =0$, $\kappa=1$}).
\nopagebreak

\ms
\ni
Let $f_n$ be a minimizing sequence of \eqref{def-K}. By a similar argument as above, we may assume that $f_n$ is spherically symmetric in space.
Moreover, from the rescaling formulas of Appendix \ref{appC}, the sequence of functions defined by $\hat{f}_n= f_n(\frac{x}{\lambda_n},\lambda_n v)$ (where $\lambda_n^2=\| |v|^2f_n\|_{L^1}$) satisfies 
$$\hat{f}_n \in  \mathcal{F}(M_1,M_j),\quad  K(\hat{f}_n)=K(f_n)\quad \mbox{and}\quad \| |v|^2\hat{f}_n\|_{L^1}=1.$$
In particular, $(\hat f_n)$ is bounded in $\mathcal{E}_j$. From Lemma \ref{propC2} of Appendix \ref{appC}, there exists $\hat f \in \mathcal{E}_j$ such that
$$\hat f_n\rightharpoonup \hat f \ \textrm{in}\  L^p(\R^6) \ \ \textrm{and}\ \ E_{pot}(\hat f_n)\rightarrow E_{pot}(\hat f).$$
Since $(\hat f_n)$ is a minimizing sequence of \eqref{def-K}, $E_{pot}(\hat f_n)$ converges to $J(M_1,M_j)^{-1}$. This implies that
$$E_{pot}(\hat f)=J(M_1,M_j)^{-1}>0$$
 and then $\hat f\neq 0$.

Moreover, from Fatou's Lemma, we have $K(\hat f) \leq \liminf K(\hat f_n)=J(M_1,M_j)$ and we have also $\hat f\in  \mathcal{F}(\alpha M_1, \beta M_j)$ with $0<\alpha,\beta \leq 1$. A similar rescaling as in the proof of Lemma \ref{lemmonotone-K} gives 
$$K(\hat f)\geq \frac{1}{\alpha^{\frac{q-3}{3(q-1)}}\beta^{\frac{2}{3(p-1)}}} J(M_1,M_j),$$
which implies that $\alpha=\beta=1$. Therefore $f$ is a minimizer of the variational problem \eqref{def-K}.

\bs
\ni
\textsl{Step 2. Euler-Lagrange equation for the minimizer.}
\nopagebreak

\bs
\ni
{\sl The Poisson-Manev case} ($\delta>0$).

\ms
\ni
Let $M_1,M_2>0$ satisfy the subcritical condition \eqref{subcrit} and let $Q$ be a minimizer of \eqref{def-I}.  Our goal in this step is to derive the Euler-Lagrange equation satisfied by $Q$. Let $\varepsilon>0$. We introduce the set
$$S_\varepsilon=\{ (x,v)\in \R^6,\ Q(x,v)\geq \varepsilon\},$$ 
and pick a compactly supported function $g\in L^{\infty}(\R^6)$  such that $g >0$ almost everywhere in $\R^6\backslash S_\varepsilon$. Then,
$$ \textrm{for all}\ t\in \left[0,\frac{\varepsilon}{\|g\|_\infty}\right],\qquad f_t=Q+t g \in \mathcal{E}_j\backslash \{0\}.$$
Similarly as in Appendix \ref{appA}, there exists a unique pair $(\gamma_t,\eta_t)$ positive numbers such that the function $\tilde{f_t}$ defined by 
$$\dis \tilde{f_t}(x,v)=\gamma_t f_t\left(x,\left(\frac{\gamma_t}{\eta_t}\right)^{\frac{1}{3}}v\right)$$ 
belongs to $\mathcal{F}(M_1,M_j)$, which is equivalent to
\be \eta_t=\frac{M_1}{\|f_t\|_{L^1}}\ \ \textrm{and}\ \ \ \frac{1}{\gamma_t}\left\| j(\gamma_t f_t)\right\|_{L^1}=\frac{M_j}{M_1}\|f_t\|_{L^1}. \label{nt-gammat} \ee
By differentiating the first equality, we obtain for $t\rightarrow 0$
\be \eta_t=1-\frac{\int_{\R^6} g}{M_1} t + o(t).\label{dev_eta}\ee
For all $t\in [0,\frac{\varepsilon}{\|g\|_\infty}]$ and for all $\gamma \in \R_+^\ast$, we set
$$G(\gamma,t)=\frac{1}{\gamma}\int_{\R^6}j(\gamma f_t)-\frac{M_j}{M_1}\int_{\R^6}f_t.$$
Then G is  clearly a $\mathcal{C}^1$ function of $t$ and  $\gamma$. Moreover, from Appendix \ref{appA}, we get $$\frac{\partial G}{\partial \gamma}(\gamma,t)>0.$$
This implies that $t\mapsto \gamma_t$ is a  $\mathcal{C}^1$ function and, by differentiating \eqref{nt-gammat} with respect to $t$, we obtain 
\be \gamma_t=1+\left(\frac{M_j}{M_1C_Q}\int_{\R^6} g-\frac{1}{C_Q}\int_{\R^6} j'(Q)g\right)t+o(t),\ \textrm{as}\ \  t\rightarrow 0,\label{dev_gamma} \ee
where, from the hypothesis \eqref{j-H3}, $C_Q=\|j'(Q)Q\|_{L^1}-M_j$ is a positive constant. 
Since $Q$ is a minimizer of \eqref{def-I} and since $\tilde{f_t}$ belongs to $\mathcal{F}(M_1,M_j)$, we have 
$$\lim_{t\rightarrow 0}\frac{\mathcal{H}(\tilde{f_t})-\mathcal{H}(Q)}{t}\geq 0.$$
From the computation in Appendix \ref{appA}, we also have
$$\left\{ \begin{array} {l}
\dis \left\| |v|^2\tilde{f_t}\right\|_{L^1}-\left\| |v|^2 Q\right\|_{L^1}=\frac{\eta_t^{5/3}}{\gamma_t^{2/3}}\left(\left\| |v|^2 Q\right\|_{L^1}+t \int_{\R^6}  |v|^2 g\right)-\left\| |v|^2 Q\right\|_{L^1}, \\
\dis E_{pot}(\tilde{f_t})-E_{pot}(Q)=\eta_t^2\left( E_{pot}(Q) -2t \int_{\R^6}\phi_Q g + t^2 E_{pot}(g)\right)-E_{pot}(Q).
\end{array}\right.$$
Inserting the expansions \eqref{dev_eta} and \eqref{dev_gamma} in these expressions, we get
\be \int_{\R^6} \left( \frac{|v|^2}{2}+\phi_Q -\lambda- \mu j'(Q) \right) g \geq 0, \label{lagrange-ineg}\ee
with 
$$\mu=-\frac{\left\| |v|^2 Q\right\|_{L^1}}{3 C_Q} \ \ \textrm{and}\  \lambda=-\frac{1}{M_1}\left( E_{pot}(Q)-\frac{\left\| |v|^2 Q\right\|_{L^1}}{6}\left(5+\frac{2M_j}{C_Q}\right)\right).$$
We now observe that $\mu$ and $\lambda$ are negative. Indeed, the equality $I(M_1,M_j)=\left\| |v|^2 Q\right\|_{L^1}-E_{pot}(Q)$ gives
$$\lambda=\frac{1}{M_1}\left( I(M_1,M_j)-\frac{\left\| |v|^2 Q\right\|_{L^1}}{6C_Q}\int_{\R^6} \left( j'(Q)Q-3j(Q)    \right)  \right),$$
where $I(M_1,M_j)<0$ and, from \eqref{j-H3},  $j'(Q)Q-3j(Q)\geq 0$. Since equality \eqref{lagrange-ineg} holds for all $g\in L^{\infty}(\R^6)$ which is compactly supported on $S_\varepsilon$, we have
$$ \frac{|v|^2}{2}+ \phi_Q - \lambda-  \mu j'(Q)=0\ \ \textrm{on}\  S_\varepsilon.$$
This means that this equality holds on Supp(Q). Similarly, out of the support of $Q$, as $g\geq 0$, we have
$$ \frac{|v|^2}{2}+ \phi_Q - \lambda \geq 0.$$
We finally get, for all $(x,v)\in \left(\R^3\right)^2$ ,$$Q(x,v)=(j')^{-1}\left(\dfrac{\frac{|v|^2}{2}+\phi_{Q}(x)-\lambda}{\mu}\right)_+.$$
We will show later (Step 3) that $\phi_Q$ is a $\mathcal C^1$ function, which is sufficient to ensure that $Q$ is a steady state. Indeed, from (H1), we deduce that the function $(j')^{-1}$ is $\mathcal C^1$ on $\RR_+^*$ with $(j')^{-1}(0)=0$. Hence, $Q$ being a function of the microscopic energy is a steady state of \fref{vm}, at least in the weak sense. Note that $Q$ is $\mathcal C^1$ in the interior of its support and is continuous but may have an infinite derivative at the boundary of its support.

\bs
\ni
{\sl The pure Manev case} ($\delta=0$, $\kappa=1$).

\ms
\ni
Let $Q$ be a minimizer of \eqref{def-K}. To get the Euler-Lagrange equation, we simply differentiate $f\mapsto  K(f)$ following the same procedure as above and find after computations 
$$Q=(j')^{-1}\left(\dfrac{\frac{|v|^2}{2}+\gamma \phi_{Q}(x)-\lambda}{\mu}\right)_+,$$
with $$\gamma=\frac{ \| |v|^2 Q \|_{L^1}}{E_{pot}(Q)}=J(M_1,M_j),\ \lambda=-\frac{(C_Q-2 M_j) \| |v|^2 Q\|_{L^1}}{6 M_1 C_Q}, \ \mu=-\frac{\| |v|^2 Q \|_{L^1}}{3C_Q}.$$
By inserting these expressions in \eqref{vm}, we observe that $f$ is a steady state of \eqref{vm} if and only if $\gamma=1$, which means $J(M_1,M_j)=1$. Let $M_1>0$ be fixed. From the control of the infimum \eqref{control-KB2}, we deduce that the function $M_j\mapsto J(M_1,M_j)$ is continuous, strictly decreasing and satisfies
$$\lim_{M_j\to 0}J(M_1,M_j)=+\infty, \qquad \lim_{M_j\to +\infty}J(M_1,M_j)=0.$$
Therefore, it is clear that there exists a unique $M_j$ such that $J(M_1,M_j)=1$.

\bs
\ni
\textsl{Step 3. Regularity of the potential $\phi_{Q}$ and compact support of $Q$}
\nopagebreak

\bs
\noindent
Let us prove that $\phi_{Q}$  belongs to $\mathcal{C}^{1,\alpha}$, for all $\alpha\in(0,1)$. Using the expression of $Q$, we get
\be \rho_{Q}(x)=\int_{\R^3}(j')^{-1}\left(\dfrac{\frac{|v|^2}{2}+\phi_{Q}(x)-\lambda}{\mu}\right)_+ dv. \label{expression-rho}\ee
Passing to the spherical velocity coordinate $u=|v|$ and performing the change of variable $w=\frac{u^2}{2|\mu|}$, we get
\be \rho_{Q}(x)=4\pi\sqrt{2}|\mu|^{\frac{3}{2}}\int_0^{+\infty}(j')^{-1}\left(k(x)-w\right)_+ \sqrt{w}dw,\ee
where $\dis k(x)=\frac{\phi_{Q}(x)-\lambda}{\mu}$. We remark that the support of $\rho_{Q}$ is contained in $\{x \in \RR^3,\ k(x)\geq 0\}$ and that $k(x)_+ \leq \phi_{Q}(x)/\mu$. Moreover, from (H2), for all $s\geq 0$ we have
$$\dis (j')^{-1}(s)\leq C \left(s^{\frac{1}{p-1}}+s^{\frac{1}{q-1}}\right).$$ 
Therefore
\bea
\rho_{Q}(x)&\leq& C \int_0^{k(x)_+}\left((k(x)-w)^{\frac{1}{p-1}}+(k(x)-w)^{\frac{1}{q-1}}\right)\sqrt{w}dw\nonumber\\
& \leq& C \left(\left(k(x)_+\right)^{\frac{3}{2}+\frac{1}{p-1}}+(\left(k(x)_+\right)^{\frac{3}{2}+\frac{1}{q-1}}\right)\nonumber\\
&\leq& C \left(\left|\phi_{Q}(x)\right|^{\frac{3}{2}+\frac{1}{p-1}}+\left|\phi_{Q}(x)\right|^{\frac{3}{2}+\frac{1}{q-1}}\right).\label{ineg-rho-phi}
\eea
Since $Q$ belongs $L^1\cap L^p$, $p>3$, and since $|v|^2Q$ belongs to $L^1$, we deduce from interpolation inequalities that $\rho_{Q}\in L^1 \cap L^{p_0}$ with $p_0=\frac{5p-3}{3p-1}\in(\frac{3}{2},\frac{5}{3}]$. 

Assume now that $\rho_{Q}\in L^1 \cap L^{k}$ for some $\frac{3}{2}<k<3$. Then from the Hardy-Littlewood-Sobolev inequality, we deduce that  $\phi_{Q}^{P}$ belongs to all $L^s$ with $3<s\leq \infty$ and that $\phi_{Q}^{M}$ belongs to all $L^{s}$ with $\frac{3}{2}<s\leq \frac{3k}{3-k}$. Hence, from \eqref{ineg-rho-phi}, since $q\geq p$, we deduce that the function
$\rho_{Q}$ belongs to $L^{\ell}$ with
\be \ell\left(\frac{3}{2}+\frac{1}{p-1}\right)=\frac{3k}{3-k}.\label{suite}\ee
Using \fref{suite} and $p>3$ (Assumption (H2)), a simple bootstrap argument enables to prove that there exists $r>3$ such that  $\rho_Q\in L^r$.

Consequently, from Sobolev embeddings and from $(-\Delta)^{1/2}\phi_{Q}^{M}=-\rho_Q$, we deduce that  the Manev potential $\phi_{Q}^{M}$ belongs to $\mathcal{C}^{0,\alpha}$ for all $\alpha\in(0,1-\frac{3}{r})$. Since this function converges to 0 at the infinity, we have  $\phi_Q^M\in L^\infty$, and then $\phi_Q\in L^\infty$. Thus \eqref{ineg-rho-phi} gives $\rho_Q\in L^\infty$. Finally, using again Sobolev embeddings, $\phi_{Q}^{M}$ and $\phi_{Q}^{P}$ belong to $\mathcal{C}^{0,\alpha}$ for all $\alpha\in(0,1)$. 

From the regularity of $\phi_Q$, the fact that this function goes to 0 as $|x|\to +\infty$ and that $\lambda<0$, one deduces that $$\mbox{Supp}(Q)=\left\{(x,v)\in \R^6,\quad \frac{ |v|^2}{2}+\phi_{Q}(x)-\lambda\leq 0\right\}$$ is a compact subset of $\R^6$.

Let us now prove that $\rho_Q$ belongs to $\mathcal{C}^{0,\alpha}$ for all $\alpha\in(0,1)$. Passing to the spherical coordinate in velocity in the expression \fref{expression-rho} of $\rho_Q$ and performing the change of variable $s=\frac{\frac{|v|^2}{2}+\phi_{Q}(x)-\lambda}{\mu}$ yields
$$\rho_Q(x)= |\mu|^\frac{3}{2}\sqrt{2} \int_{\RR_+} (j')^{-1}(s) \left( \frac{\phi_Q(x)-\lambda}{\mu} -s \right)^{1/2}_+ ds. $$
For all $k \in \R$,  denote $f(k)= \int_{\RR_+} (j')^{-1}(s) \left( k-s \right)^{1/2}_+ ds$. We claim that, for all $k_0>0$, we have
\be \forall k_1,k_2 \in (-\infty, k_0], \qquad  \left| f(k_1)-f(k_2)\right| \leq (j')^{-1}(k_0) k_0^{1/2}\left| k_1-k_2 \right|. \label{claim-reg}\ee
By taking $k_0=\frac{-\| \phi_Q \|_\infty -\lambda}{\mu}$, we deduce from this claim that
$$\forall (x,y)\in \RR^6,\qquad \left|\rho_{Q}(x)-\rho_Q(y)\right|\leq C\left|\phi_Q(x)-\phi_Q(y)\right|.$$ 
This shows that $\rho_Q\in \mathcal{C}^{0,\alpha}$ for all $\alpha\in(0,1)$. Next, since we have $$\phi_Q=\Delta^{-1}(\rho_Q)-(-\Delta)^{-1/2}(\rho_Q),$$
we can conclude from standard regularity argument that $\phi_Q\in \mathcal{C}^{1,\alpha}$ for all $\alpha\in(0,1)$. This is  the regularity of the potential stated in Theorem \ref{theo1}. Let us now prove the claim \eqref{claim-reg}.
For all $k_1\leq k_2\leq k_0$, we have
$$\begin{array} {rcl}
 f(k_2)-f(k_1)  & =  & \dis \int_{\RR_+} (j')^{-1}(s) \left(( k_2-s )^{1/2}_+ - ( k_1-s )^{1/2}_+ \right)ds \\
   & \leq & \dis (j')^{-1}(k_0)  \int_{\RR_+}  \left(( k_2-s )^{1/2}_+ - ( k_1-s )^{1/2}_+ \right)ds \\
   &  &\dis = \frac{2}{3}  (j')^{-1}(k_0) \left( (k_2)_+^{3/2}-(k_1)_+^{3/2} \right)\leq (j')^{-1}(k_0)k_0^{1/2}(k_2-k_1).
\end{array} $$
Since $f$ is an increasing function, this yields \fref{claim-reg}. This concludes the proof of the regularity of the potential stated in Theorem \ref{theo1}.

\bs
\ni
\textsl{Step 4. The functions $\rho_Q$ and $\phi_{Q}$ are spherically symmetric and monotone.}
\nopagebreak

\bs
\noindent
Consider a minimizer $Q$ of \eqref{def-I}, continuous and compactly supported thanks to the previous step, and denote by $Q^{\ast x}$  its symmetric rearrangement with respect to the $x$ variable only. We have clearly $Q^{\ast x} \in \mathcal{F}(M_1,M_j)$ and $\int |v|^2 Q dxdv=\int |v|^2 Q^{\ast x} dxdv$. Moreover,  by the Riesz inequality (see \cite{LL}), we have
\be
\label{riesz}
\int_{\RR^{6}} Q(x,v)Q(y,w)g(|x-y|)dxdy\leq \int_{\RR^{6}} Q^{\ast x}(x,v)Q^{\ast x}(y,w)g(|x-y|)dxdy 
\ee
for all $(v,w)\in \RR^3\times \RR^3$, where $g(r)=\frac{\delta}{r}+\frac{\kappa}{r^2}$ (recall that $\delta\geq 0$ and $\kappa\geq 0$). Therefore, by integrating this inequality with respect to $v$ and $w$, one gets
$$E_{pot}(Q)\leq E_{pot}(Q^{\ast x}),$$
which means that $\calH(Q^{\ast x})\leq \calH(Q)$: $Q^{\ast x}$ is also a minimizer of \eqref{def-I}. Hence, we must have equality in the above inequalities: $E_{pot}(Q)= E_{pot}(Q^{\ast x})$ and, even more, we have an equality in \eqref{riesz} for all $v,w$. We are then in a situation of equality in the Riesz inequality: since the function $g$ is strictly decreasing, we deduce that (see \cite{LL}), for all $v,w$, there exists a translation shift $x_0(v,w)$ such that
\be
\label{shift}
Q(x,v)=Q^{\ast x}(x+x_0(v,w),v)\quad\mbox{and}\quad  Q(x,w)=Q^{\ast x}(x+x_0(v,w),w).
\ee
Let $v$ be such that $Q(\cdot,v)\not \equiv 0$.  $Q$ being compactly supported, we  integrate the first equality in \eqref{shift} against $x$ and obtain
\bee
\int_{\RR^3}xQ(x,v)dx&=&\int_{\RR^3}xQ^{\ast x}(x+x_0(v,w),v)dx\\
&=&\int_{\RR^3}xQ^{\ast x}(x,v)dx-x_0(v,w)\int_{\RR^3}Q^{\ast x}(x,v)dx.
\eee
Hence, we have the expression
$$x_0(v,w)=\frac{\int_{\RR^3}x(Q(x,v)-Q^{\ast x}(x,v))dx}{\int_{\RR^3}Q^{\ast x}(x,v)dx}$$
and then $x_0(v,w)$ is independent of $w$. Similarly, using the second equality in \eqref{shift}, one obtain that $x_0$ is independent of $v$. We have proved finally that there exists $x_0\in \RR^3$ such that
$$Q(x,v)=Q^{\ast x}(x+x_0,v)=Q^{\ast x}(|x+x_0|,v),\qquad \forall v\in \RR^3.$$
Consequently, up to a translation shift, $\rho_Q$ is a nonincreasing function of $|x|$.

\bs
Let us now prove that $\phi_{Q}$ is a nondecreasing function  of $r=|x|$. Since the function $j$ is convex and $\mu<0$, the expression \eqref{expression-rho} shows that $\phi_{Q}(r)$ is nondecreasing on the compact support  of the nonincreasing function $\rho_Q(r)$. Let $[0,R_Q]$ be this compact support.

For $|x|=r>R_{Q}$, we have
$$\phi_{Q}^P(x) =- \int_{\R^3}\frac{\rho_{Q}(y)}{4 \pi |x-y|}dy \ \ \textrm{and} \  \ \phi_{Q}^M(x) =- \int_{\R^3}\frac{\rho_{Q}(y)}{2 \pi^2|x-y|^2}dy.$$
Passing to the spherical coordinate (see the proof of Proposition \ref{propC1} in Appendix \ref{appC}), we have
$$ \phi_{Q}^P(x) =- \frac{M_1}{4\pi r}\quad \mbox{and}\quad \phi_{Q}^M(x) =- \frac{1}{\pi}  \int_{0}^{R_{Q}}\frac{s\rho_{Q}(s)}{r} \ln \left(\frac{r+s}{r-s}\right)ds.$$
Since the function $r\mapsto \frac{1}{r} \ln \left(1+\frac{2s}{r-s}\right)$ is positive and decreasing, $\phi_{Q}$ is nondecreasing on $[R_Q,+\infty)$. The proof of Theorem \ref{theo1} is complete.
\qed
 

\section{Orbital stability of the ground states}
To prove the orbital stability result  stated in Theorem \ref{theo3}, we first need to prove the uniqueness of the minimizer under equimeasurability and symmetric constraints which are inherited from the invariance properties of the Vlasov-Manev flow. This uniqueness result is at the heart of our stability analysis and is quite robust in the sense that its proof does not use the Euler-Lagrange equation.  Technically, the uniqueness proof only uses the fact that a minimizer is a function of a certain microscopic energy, which is not necessarily that of the minimizer. Therefore our proof does not use the equation satisfied by the potential itself (a non linear fractional-Laplacian equation in the present case).
\label{sect2}
\subsection{Uniqueness of the minimizer under equimeasurability condition}
\label{subsect3}
This subsection is devoted to the proof of Lemma \pref{theo2}.

\bs
\ni
Let $$Q_0(x,v)=F\left(\frac{|v|^2}{2}+\psi_0(x)\right),\qquad Q_1(x,v)=F\left(\frac{|v|^2}{2}+\psi_1(x)\right)$$
 be the functions defined in Lemma \ref{theo2}. Note that $\psi_0$ and $\psi_1$ are not supposed to coincide with $\phi_{Q_0}$ and $\phi_{Q_1}$ respectively, which means that they are not supposed to satisfy the fractional-Laplacian equation. For $i\in\{0,1\}$ and for all $\tau<0$, we define
 $$a_{\psi_i}(\tau)=\mbox{meas}\left\{(x,v)\in \RR^6,\quad \frac{|v|^2}{2}+\psi_i(x)<\tau\right\}.$$ From the equimeasurability of $Q_0$ and $Q_1$ and the properties of the function $F$, we have
\be \forall \tau<0, \quad a_{\psi_0}(\tau)=a_{\psi_1}(\tau). \label{egal-aphi} \ee
 For $i\in\{0,1\}$, we define  $$\mu_{\psi_i}(\lambda)=\mbox{meas}\left\{x\in \RR^3,\quad \psi_i(x)<\lambda\right\}$$ for all $\lambda< 0$ and we have then for all $\tau<0$,
 $$ a_{\psi_i}(\tau)=\int_{\R^3}\mu_{\psi_i}\left(\tau-\frac{|v|^2}{2}\right)dv. $$
Passing to the spherical velocity coordinate $u=|v|$ and performing the change of variable $w=\tau-u^2/2$, we obtain 
\be a_{\psi_i}(\tau)=4\pi\sqrt{2}\int^{\tau}_{-\infty}\mu_{\psi_i}(w)\sqrt{\tau-w}\,dw. \label{expression-aphi}\ee
We claim that the expression \eqref{expression-aphi} and the equality \eqref{egal-aphi} imply that, 
\be \textrm{for almost all }  \lambda<0,\qquad \mu_{\psi_0}(\lambda)=\mu_{\psi_1}(\lambda). \label{egal-muphi}\ee 
Hence, as $\psi_0$ and $ \psi_1$ are continuous and nondecreasing, we have $\psi_0=\psi_1$ on the set
$$\{x\in \R^3,\ \psi_0(x)<0\}=\{x\in \R^3,\ \psi_1(x)<0\},$$
which immediatly gives $Q_0=Q_1$.

\bs
\ni
{\em Proof of \eqref{egal-muphi} from  \eqref{egal-aphi} and \eqref{expression-aphi}.}  By differentiating with respect to $\tau$ the function $a_{\psi_i}$ defined by \eqref{expression-aphi}, one gets 
\be \forall \tau<0,\qquad  a_{\psi_i}'(\tau)=2\pi\sqrt{2}\int^{\tau}_{-\infty}\frac{\mu_{\psi_i}(w)}{\sqrt{\tau-w}}dw. \ee
Now, remarking that, for $w<\lambda$, the following integral is constant:
$$I(\lambda,w)=\int^\lambda_w \frac{d\tau}{\sqrt{(\lambda-\tau)(\tau-w)}}=\pi,$$
one deduces from the Fubini theorem that
$$\int^{\lambda}_{-\infty} \frac{a_{\psi_i}'(\tau)}{\sqrt{\lambda-\tau}}d\tau=2\pi\sqrt{2}\int^{\lambda}_{-\infty}\mu_{\psi_i}(w)I(\lambda,w)dw=2\pi^2\sqrt{2}\int^{\lambda}_{-\infty}\mu_{\psi_i}(w)dw.$$
Thus, from $a_{\psi_0}=a_{\psi_1}$, we deduce that $\mu_{\psi_0}(\lambda)=\mu_{\psi_1}(\lambda)$ for almost all $\lambda<0$, and the proof of \eqref{egal-muphi} is complete.

\bs
\ni
{\em End of the proof of Lemma \pref{theo2}.} Let $Q_0, Q_1$ be two equimeasurable and spherically symmetric steady states to \eqref{vm} which minimize the variational problem \eqref{def-I} in the Poisson-Manev case ($\delta > 0$) or the variational problem \eqref{def-K} in the pure Manev case ($\delta=0$, $\kappa=1$). From Theorem \ref{theo1}, there exist $\lambda_0,\mu_0,\lambda_1,\mu_1<0$ such that, for $i\in\{0,1\}$,
\be Q_i(x,v)=(j')^{-1}\left(\dfrac{\frac{|v|^2}{2}+\phi_{Q_i}(x)-\lambda_i}{\mu_i}\right)_+.\ee
We now define, for $i\in\{0,1\}$, $$\widetilde Q_i(x,v)=Q_i\left(\frac{x}{|\mu_i|^{1/2}},|\mu_i|^{1/2} v\right).$$
The function $\widetilde Q_0$ and $\widetilde Q_1$ are still equimeasurable and satisfy
$$\widetilde Q_i(x,v)=(j')^{-1}\left( -\frac{|v|^2}{2}-\psi_i(x)\right)_+ \ \ \textrm{with}\ \ \psi_i(x)=\frac{\phi_{Q_i}\left(\frac{x}{|\mu_i|^{1/2}}\right)-\lambda_i}{|\mu_i|}.$$
Since $\phi_{Q_i}$ is continuous nondecreasing and converges to $0$ as $r\rightarrow +\infty$,  the function $\psi_i$ is continuous, nondecreasing and the set $\{x\in \R^3,\ \psi_0(x)<0\}$ is bounded. From the previous step, we then conclude that  
$$\widetilde Q_0=\widetilde Q_1,$$ 
which means that
\be \label{eqE2}
Q_1\left(x,v\right)=Q_0\left(\frac{x}{\alpha},\alpha v\right) \ \ \textrm{with}\ \ \alpha=\sqrt{\frac{\mu_1}{\mu_0}}. \ee
We shall now prove that $\alpha=1$.

\ms
\ni
{\sl The pure Manev case}. In this case, the equality of the kinetic energies (which is assumed in this lemma) directly gives $\alpha=1$.

\ms
\ni
{\sl The Poisson-Manev case}. Let us derive a virial identity satisfied by the minimizers $Q$ of \eqref{def-I}, using a rescaling argument. For $\lambda>0$, we set $f_\lambda (x,v)=Q(\lambda x,\frac{v}{\lambda})$, which implies $f_\lambda\in \mathcal{F}(M_1,M_j)$ and 
$$\mathcal{H}(f_\lambda)=\lambda^2 \left\||v|^2 Q\right\|_{L^1} -\lambda \delta E_{pot}^P(Q)-\lambda^2 \kappa E_{pot}^M(Q).$$
This function of $\lambda$ has a strict global minimizer in $\lambda=1$, which yields the following virial identity:
\be \left\||v|^2 Q\right\|_{L^1}- \frac{\delta}{2}E_{pot}^P(Q)-\kappa E_{pot}^M(Q)=0. \ee
Moreover we recall that $Q$ satisfies
$$ \left\||v|^2 Q\right\|_{L^1}- \delta E_{pot}^P(Q)-\kappa E_{pot}^M(Q)=I(M_1,M_j).$$
Combining this two equalities,  we get 
\be
\label{eqE}
\frac{\delta}{2}E_{pot}^P(Q)=-I(M_1,M_j).
\ee
Let us now use this identity for the two minimizers $Q_0$ and $Q_1$. From \eqref{eqE} and \eqref{encad-I}, one deduces that
$$E_{pot}^P(Q_0)=E_{pot}^P(Q_1)>0.$$
Moreover, from \eqref{eqE2} and Appendix \ref{appA}, one gets
$$E_{pot}^P(Q_1)=\frac{1}{\alpha} E_{pot}^P(Q_0).$$ This yields $\alpha=1$, which ends the proof of Lemma \ref{theo2}.
\qed

\subsection{Orbital stability of the minimizers, proof of Theorem \pref{theo3}}
\label{subsect4}
In this subsection, we prove Theorem \pref{theo3}.

\ms
\ni
{\sl The Poisson-Manev case}. 

\ms
\ni
Let $Q$ be a minimizer of \eqref{def-I} and assume that Theorem \ref{theo3} is false. Then there exist $\varepsilon>0$ and sequences $f_0^n \in \mathcal{E}_j$, $t_n>0$, such that
\be \lim_{n\rightarrow + \infty}\|f_0^n-Q\|_{\mathcal{E}_j}=0, \label{conv-f0}\ee
and
\be \forall n\geq0,\ \forall x_0 \in \R^3,\ \|f^n(t_n,x,v)-Q(x+x_0,v)\|_{\mathcal{E}_j}\geq \varepsilon, \label{nonconv-fn}\ee
where $f^n(t,x,v)$ is a solution to \eqref{vm} with initial data $f_0^n$.
 
From \eqref{conv-f0}, we have
\be \lim_{n\rightarrow+\infty}\mathcal{H}(f_0^n)=I(M_1,M_j),\ \lim_{n\rightarrow+\infty}\|f_0^n\|_{L^1}=M_1,\ \lim_{n\rightarrow+\infty}\|j(f_0^n-Q)\|_{L^1}=0. \ee
In particular, $f_0^n$ converges to $Q$ in the strong $L^p$ topology and hence almost everywhere, up to a subsequence. Using the assumptions (H1), (H2) and the convexity of $j$, we deduce from a classical argument (see Theorem 2 in \cite{BL}) that $\|j(f_0^n)\|_{L^1}\rightarrow \|j(Q)\|_{L^1}$.

Let now $g_n(x,v)=f_n(t_n,x,v)$. By the conservation properties of the Vlasov-Manev system \eqref{vm},  we have
\be 
\lim_{n\rightarrow+\infty}\mathcal{H}(g_n)= I(M_1,M_j),\ \lim_{n\rightarrow+\infty}\|g_n\|_{L^1}=M_1,\ \lim_{n\rightarrow+\infty}\|j(g_n)\|_{L^1}=M_j, \label{conservations}
\ee
and, for all $t\geq 0$, 
\be
\mbox{meas}\{(x,v)\in\R^6,\ g_n(x,v)>t\} =  \mbox{meas}\{(x,v)\in\R^6,\ f_0^n(x,v)>t\}. \label{equi-gn-f0n} 
\ee
{}From Appendix \ref{appA}, let us define $$\bar{g}_n(x,v)=\gamma_n g_n\left( \frac{\gamma_n^{1/3}}{\lambda_n^{1/3}}x, v\right)$$ such that $\|\bar{g}_n\|_{L^1}=M_1$ and $\|j(\bar{g}_n)\|_{L^1}=M_j$. Then, from \eqref{conservations},
\be
\label{coefcv}
\gamma_n \rightarrow 1, \ \lambda_n \rightarrow 1,
\ee
and $$\lim_{n\rightarrow+\infty}\mathcal{H}(\bar{g}_n)=\lim_{n\rightarrow+\infty}\mathcal{H}(g_n)= I(M_1,M_j).$$
Hence $\bar{g}_n$ is a minimizing sequence of \eqref{def-I}. Now, from classical arguments based on concentration-compactness techniques (\cite{L1}, \cite{L2}) and the non-dichotomy inequality \eqref{nondicho-I} (see \cite{LMR} and \cite{LM2} for more details), $\bar{g}_n$ is relatively strongly compact in $\mathcal{E}_j$ and converges to a ground state $Q_1$, up to a subsequence and up to a translation shift in space. Hence, by \eqref{coefcv}, we have
\be g_n \rightarrow Q_1 \mbox{ in } \mathcal{E}_j \label{conv-gnbar}\ee
up to a subsequence and up to a translation shift.
 
Let us now prove that the equimeasurability \eqref{equi-gn-f0n} and the $L^1$ convergences of $g_n$ and $f_0^n$ imply the equimeasurability of $Q$ and $Q_1$. Indeed, we remark that, for $t>0$ and $0<\varepsilon<t$,  
$$\left\{\begin{array} {l}
 \dis  \{ g_n>t  \}   \subset  \left( \{ |g_n-Q_1|<\varepsilon \} \cap \{ Q_1>t-\varepsilon\}\right) \cup  \{ |g_n-Q_1|\geq \varepsilon \}, \\ \\
  \dis  \{ g_n>t  \}   \supset  \{ |g_n-Q_1|<\varepsilon \} \cap \{ Q_1>t+\varepsilon\}.
 \end{array}\right.$$
By passing to the limit as $n\rightarrow+\infty$, one gets
$$\left\{\begin{array} {l}
 \dis \limsup_{n\rightarrow+\infty}  \mbox{meas} \{ g_n>t  \}  \leq \mbox{meas}\{ Q_1>t-\varepsilon\}, \\ \\
  \dis \liminf_{n\rightarrow+\infty}  \mbox{meas} \{ g_n>t  \}  \geq \mbox{meas}\{ Q_1>t+\varepsilon\}.
 \end{array}\right.$$
Finally, passing to the limit as $\varepsilon \rightarrow 0$, we have $ \mbox{meas}\{ g_n>t  \}\rightarrow \mbox{meas}\{ Q_1>t\}$ for almost all $t>0$ and similarly $\mbox{meas}\{ f_n^0>t  \}\rightarrow \mbox{meas}\{ Q>t\}$ for almost all $t>0$. Observing that the functions $t\mapsto \mbox{meas}\{ Q>t\}$ and $t\mapsto \mbox{meas}\{ Q_1>t\}$ are right-continuous,  we obtain the equimeasurability of $Q$ and $Q_1$.

We now use the characterization of ground states stated in Theorem \ref{theo1} and the uniqueness result given by Lemma \ref{theo2}, to conclude that, $Q=Q_1$, up to a space translation shift. Finally, \eqref{conv-gnbar} contradicts \eqref{nonconv-fn} and the proof of Theorem \ref{theo3} is complete.
\qed

\bs
\ni
{\sl The pure Manev case}.

\ms
\ni
To prove Theorem \ref{theo3} for the pure Manev case, it is clearly sufficient to prove the following proposition.
\begin{proposition}\label{propstab} Let $Q$ be a steady state of \eqref{vm} which minimizes \eqref{def-K} and let $(f_n)_{n\geq 1}\in \mathcal{E}_j$ such that\\
\be
\label{hyp1}
\forall s>0, \quad \lim_{n\to +\infty}\mbox{\rm meas}\{(x,v)\in\R^6,\ f_n(x,v)>s\} =  \mbox{\rm meas}\{(x,v)\in\R^6,\ Q(x,v)>s\}
\ee
and
\be
\label{hyp2}
\|f_n\|_{L^1} \rightarrow  \| Q\|_{L^1},\quad  \limsup_{n\rightarrow +\infty} \|j(f_n)\|_{L^1} \leq \| j(Q) \|_{L^1}\mbox{ and }\limsup \frac{\mathcal{H}(f_n)}{\| |v|^2 f_n \|_{L^1}} \leq 0.
\ee
Then there exists $(y_n)_{n\geq 1}$ sequence on $\R^3$ such that up to a subsequence
$$ f_n\left(\lambda_n(x+y_n),\frac{v}{\lambda_n}\right) \rightarrow Q \ \textrm{in}\  \mathcal{E}_j,\ \  \textrm{where} \ \ \lambda_n=\left(\frac{ \| |v|^2 Q \|_{L^1}}{ \| |v|^2 f_n \|_{L^1}}\right)^{1/2}.$$
\end{proposition}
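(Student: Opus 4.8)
The plan is to follow the concentration-compactness strategy, exactly mirroring the Poisson-Manev case but replacing the energy minimization by the ratio minimization $K(f)$. First I would rescale: set $\widehat f_n(x,v)=f_n(\lambda_n x,v/\lambda_n)$ so that $\||v|^2\widehat f_n\|_{L^1}=\||v|^2 Q\|_{L^1}$, and observe via the rescaling identities of Appendix A that $K(\widehat f_n)=K(f_n)$, $\|\widehat f_n\|_{L^1}=\|f_n\|_{L^1}\to\|Q\|_{L^1}=:M_1$, $\|j(\widehat f_n)\|_{L^1}=\|j(f_n)\|_{L^1}$, and the equimeasurability \eqref{hyp1} is preserved (rescaling by this symplectic-type map preserves the distribution function of $\widehat f_n$). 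The hypothesis $\limsup \mathcal H(f_n)/\||v|^2 f_n\|_{L^1}\leq 0$ together with $\mathcal H(f)=\||v|^2f\|_{L^1}(1-1/K(f))$ then forces $\limsup K(\widehat f_n)\leq 1$, while $K(\widehat f_n)\geq J(M_1,\|j(\widehat f_n)\|_{L^1})$. Since $\limsup\|j(\widehat f_n)\|_{L^1}\leq \|j(Q)\|_{L^1}$ and $J(M_1,\cdot)$ is continuous and strictly decreasing with $J(M_1,\|j(Q)\|_{L^1})=1$, we get $\liminf K(\widehat f_n)\geq 1$; hence $K(\widehat f_n)\to 1$ and $\|j(\widehat f_n)\|_{L^1}\to \|j(Q)\|_{L^1}=:M_j$.

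Next I would extract compactness. The normalization $\||v|^2\widehat f_n\|_{L^1}=\||v|^2 Q\|_{L^1}$ combined with the $L^1$ and $L^p$ bounds coming from $\|j(\widehat f_n)\|_{L^1}$ bounded (recall $t+j(t)\geq Ct^p$) shows $(\widehat f_n)$ is bounded in $\mathcal E_j$. By Lemma \ref{propC2} of Appendix C (after symmetrizing in $x$ as in the proof of Theorem \ref{theo1}, or by the translation-adjusted concentration-compactness lemma), there is a sequence of shifts $(y_n)$ and a limit $\widehat f\in\mathcal E_j$ with $\widehat f_n(\cdot+y_n,\cdot)\rightharpoonup \widehat f$ in $L^p(\R^6)$ and $E_{pot}^M(\widehat f_n)\to E_{pot}^M(\widehat f)$; ruling out vanishing uses $E_{pot}^M(\widehat f_n)\to J(M_1,M_j)^{-1}>0$, and ruling out dichotomy uses the strict sub-additivity \eqref{control-KB1}--\eqref{control-KB2} of $J$ exactly as in the argument after Step 1 of Theorem \ref{theo1}. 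So $\widehat f\neq 0$, $\widehat f\in\mathcal F(\alpha M_1,\beta M_j)$ with $0<\alpha,\beta\leq 1$, and by Fatou plus the rescaling lower bound one finds $\alpha=\beta=1$; thus $\widehat f$ is a minimizer of \eqref{def-K} with $K(\widehat f)=J(M_1,M_j)=1$, so $\widehat f$ is a steady state of \eqref{vm}, and the convergence $\widehat f_n(\cdot+y_n,\cdot)\to\widehat f$ upgrades to strong convergence in $\mathcal E_j$ (from $L^1$ plus $L^p$ plus kinetic-energy convergence and the Brezis--Lieb-type argument of \cite{BL}).

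Finally I would identify the limit. The strong $L^1$ convergence $\widehat f_n(\cdot+y_n,\cdot)\to\widehat f$ together with the preserved equimeasurability \eqref{hyp1} implies, by the right-continuity argument used in the Poisson-Manev case, that $\widehat f$ and $Q$ are equimeasurable. Moreover $\||v|^2\widehat f\|_{L^1}=\lim\||v|^2\widehat f_n\|_{L^1}=\||v|^2 Q\|_{L^1}$, so $\widehat f$ and $Q$ are two equimeasurable minimizers of \eqref{def-K} with the same kinetic energy. Lemma \ref{theo2}(ii) then gives $\widehat f=Q$ up to a translation shift, which can be absorbed into the $y_n$. Unwinding the rescaling (and using $\lambda_n\to\lambda_\infty:=1$ — in fact $\lambda_n$ need not converge, but the statement is phrased intrinsically in terms of $\lambda_n$, so no limit of $\lambda_n$ is needed), this is precisely the claimed convergence $f_n(\lambda_n(x+y_n),v/\lambda_n)\to Q$ in $\mathcal E_j$.

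The main obstacle is the compactness step: unlike in the Poisson case, the potential kernel $1/(2\pi^2|x|^2)$ is more singular, so the continuity of $f\mapsto E_{pot}^M(f)$ under weak $L^p$ convergence with translations (Lemma \ref{propC2}) and the exclusion of the dichotomy scenario must be handled with care — this is exactly where the strict monotonicity properties of $J$ from Lemma \ref{lemmonotone-K} are indispensable. Everything else (the rescaling bookkeeping, the passage from weak to strong convergence, and the equimeasurability of the limit) is routine once the abstract concentration-compactness dichotomy is set up, and the uniqueness input is entirely delegated to Lemma \ref{theo2}.
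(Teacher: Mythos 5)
Your overall scheme — rescale to normalize kinetic energy, extract from \eqref{hyp2} and the ratio structure that $K(\widehat f_n)\to 1$ and $\|j(\widehat f_n)\|_{L^1}\to\|j(Q)\|_{L^1}$, apply concentration–compactness, identify the limit as a minimizer of \eqref{def-K} with the prescribed masses and kinetic energy, transfer equimeasurability, and invoke Lemma \ref{theo2}(ii) — is exactly the paper's strategy for Proposition \ref{propstab}. The bookkeeping and the final identification step are right. But the ``routine'' dichotomy exclusion is where you have glossed over the genuinely new point, and two of the references you lean on there don't support the weight you put on them.

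First, ``symmetrizing in $x$ as in the proof of Theorem \ref{theo1}'' is not an option here: for the existence part of Theorem \ref{theo1}, you are free to replace a minimizing sequence by its Schwarz $x$-rearrangement, but in the stability setting you must keep $f_n$ itself (the translation shift $y_n$ in the conclusion is precisely what you cannot discard). So only the translation-adjusted concentration–compactness route is admissible. Second, you say dichotomy is excluded ``by the strict sub-additivity \eqref{control-KB1}--\eqref{control-KB2} of $J$, exactly as in the argument after Step 1 of Theorem \ref{theo1}.'' There is no concentration–compactness dichotomy step in Theorem \ref{theo1} (the argument there uses rearrangement plus Lemma \ref{propC2}), and $J$ is not sub-additive in any natural additive sense — it is a constrained infimum of a \emph{ratio}, not of an energy. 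What the paper actually uses, and what you would need to reconstruct, is the chain: if $\widehat f_n=f_n^1+f_n^2+w_n$ splits with both pieces carrying a definite fraction of the mass, then $\|f_n^i\|_{L^1}\leq \alpha'\|Q\|_{L^1}$ with $\alpha'<1$, so by the strict \emph{monotonicity} of $J$ in $M_1$ (Lemma \ref{lemmonotone-K}, \eqref{control-KB1}), $K(f_n^i)\geq J(\alpha'\|Q\|_{L^1},\|j(Q)\|_{L^1})>1$ with a margin uniform in $n$; then the identity $\calH(f)=\||v|^2 f\|_{L^1}(1-1/K(f))$ forces $\calH(f_n^i)\geq C_i\||v|^2 f_n^i\|_{L^1}\geq 0$; summing and using the near-additivity of the potential energy across the split shows $\limsup(\calH(f_n^1)+\calH(f_n^2))\lesssim\eps$, hence both kinetic energies — and so both potential energies — vanish, contradicting $\liminf E_{pot}(\widehat f_n)\geq E_{pot}(Q)>0$ from \eqref{hyp4}. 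This mechanism, which is where the scale-invariant structure of the pure Manev case differs from the additive-energy concavity used in the Poisson–Manev case, is the content you would need to supply in place of the sub-additivity slogan. Once that is filled in, the rest of your argument matches the paper.
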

\begin{proof}
From the assumption (ii), the sequence of rescaled functions defined by $\hat{f_n}(x,v)=f_n(\lambda_n x, \frac{v}{\lambda_n})$ satisfies
\be
\label{hyp3}
	\forall s>0, \quad \lim_{n\to +\infty}\mbox{\rm meas}\{(x,v)\in\R^6,\ \hat f_n(x,v)>s\} =  \mbox{\rm meas}\{(x,v)\in\R^6,\ Q(x,v)>s\},
	\ee
	\be
	\label{hyp4}
	\limsup \mathcal{H}(\hat{f_n}) \leq 0 \ \textrm{and then}  \ \liminf E_{pot}(\hat{f_n}) \geq \| |v|^2 Q \|_{L^1}= E_{pot}(Q)>0. \ee
From concentration-compactness argument \cite{L1,L2} and using Lemma 3.2 in \cite{LMR}, one can deduce that the sequence $\hat f_n$ satisfies one of the three following alternatives: compactness, vanishing or dichotomy, see e.g. Lemma 3.2 in \cite{LMR} for the definitions of these standard notions. In fact, we shall prove that only compactness may occur.

Indeed, vanishing cannot occur, since \fref{hyp4} prevents $E_{pot}(\hat{f_n}) $ from going to $0$ as $n\to +\infty$. Next, if dichotomy occurs (see \cite{LMR}), then there exist $0<\alpha<1$ such that, for all $\eps>0$, there exists a decomposition $\hat{f_n}=f_n^1+f_n^2+ w_n$, with disjoint supports,  such that we have
\be \left| \|f_n^1\|_{L^1}-\alpha \|Q\|_{L^1}\right|+\left| \|f_n^2\|_{L^1}-(1-\alpha)\|Q\|_{L^1}\right|<\varepsilon, \label{dic}\ee
and
\be \left| E_{pot}(\hat{f_n})-E_{pot}(f_n^1)-E_{pot}(f_n^2) \right|< \varepsilon.
\label{erreurepot}\ee
The control of the mass \eqref{dic} and the monotonicity of the infimum from Lemma \ref{lemmonotone-K} imply that
$$K(f_n^1)\geq J( \alpha\|Q\|_{L^1}+\eps,\|j(f_n^1)\|_{L^1})\geq J( \alpha\|Q\|_{L^1}+\eps, \|j(Q)\|_{L^1}).$$
By choosing $\eps<\frac{1-\alpha}{2}\|Q\|_{L^1}$, we ensure that
$$J( \alpha\|Q\|_{L^1}+\eps, \|j(Q)\|_{L^1})\geq J\left( \frac{1+\alpha}{2}\|Q\|_{L^1}, \|j(Q)\|_{L^1}\right)>1$$
and then
\be \mathcal{H}(f_n^1)=\left\| |v|^2 f_n^1 \right\|_{L^1}\left( 1-\frac{1}{K(f_n^1)}  \right)\geq C_1\left\| |v|^2 f_n^1 \right\|_{L^1}, \label{astuce}\ee
where $C_1>0$ does not depend of $\varepsilon$ and $n$, which gives 
\be
\label{eq} \liminf_{n\rightarrow +\infty}  \mathcal{H}(f_n^1)\geq 0, \quad \mbox{ and similarly}\quad \liminf_{n\rightarrow +\infty}  \mathcal{H}(f_n^2)\geq 0.
\ee
Moreover, we have
\bee
\calH(\hat f_n) &=&\left\| |v|^2 f_n^1 \right\|_{L^1}+\left\| |v|^2 f_n^2 \right\|_{L^1}+\left\| |v|^2 w_n \right\|_{L^1}-E_{pot}(f_n)\\
&\geq&\calH(f_n^1)+\calH(f_n^2)-\eps,
\eee
where we used \eqref{erreurepot}. Passing to the limit in this inequality as $n\to +\infty$, we obtain
$$
\limsup_{n\to +\infty}(\calH(f_n^1)+\calH(f_n^2))\leq \eps.
$$
From \fref{astuce}, we deduce that
$$
\limsup_{n\to +\infty}(\left\| |v|^2 f_n^1 \right\|_{L^1}+\left\| |v|^2 f_n^2 \right\|_{L^1})\leq C\eps,
$$
where $C>0$ is independent of $\eps$. Then, using
$$E_{pot}(f_n^1)+E_{pot}(f_n^2)=\left\| |v|^2 f_n^1 \right\|_{L^1}+\left\| |v|^2 f_n^2 \right\|_{L^1}-\calH(f_n^1)-\calH(f_n^2)$$
together with \fref{eq}, we get
$$
\limsup_{n\to +\infty}(E_{pot}(f_n^1)+E_{pot}(f_n^2))\leq C\eps.
$$
For $\eps$ small enough, this contradicts \fref{hyp4} and \fref{erreurepot}. This proves that dichotomy cannot occur and then compactness follows. In particular, there exists a sequence of translation shifts $y_n$ such that, up to a subsequence,
$$\hat f_n(\cdot+y_n)\rightharpoonup \hat f\mbox{ in }L^1(\RR^6)\quad \mbox{and}\quad E_{pot}(\hat f_n)\to E_{pot}(\hat f).$$
Moreover, by lower semicontinuity and by \fref{hyp4}, we have
$$
\|j(\hat f)\|_{L^1}\leq \|j(Q)\|_{L^1}\quad \mbox{and}\quad K(\hat f)\leq \limsup_{n\to +\infty} K(\hat f_n)\leq 1.
$$
Therefore, by \fref{control-KB2}, we have
$$1=J\left(\|Q\|_{L^1},\|j(Q)\|_{L^1}\right)\leq J\left(\|Q\|_{L^1},\|j(\hat f)\|_{L^1}\right)\leq K(\hat f)\leq 1.$$
The strict monotonicity of the function $M_j\mapsto J(M_1,M_j)$ yields
$$\|j(\hat f)\|_{L^1}= \|j(Q)\|_{L^1} \quad \mbox{and}\quad K(\hat f)=1.$$
From this, it is now standard to conclude the strong convergence $\hat f_n(\cdot+y_n)\to \hat f$ in $\calE_j$. Note that $\hat f$ is a minimizer of \fref{def-K} satisfying $\left\| |v|^2 \hat f \right\|_{L^1}=\left\| |v|^2 Q \right\|_{L^1}$.
Furthermore, from the strong $L^1$ convergence of $\hat f_n(\cdot+y_n)$ to $Q$ and from their equimeasurability deduced from \fref{hyp1}, one can prove that $\hat f$ is equimeasurable to $Q$ (this proof can be done following the same lines as in the above proof of orbital stability for the Poisson-Manev case). Therefore, from Lemma \ref{theo2}, one deduces finally that $\hat f$ is equal to $Q$, up to a translation shift. This concludes the proof of Proposition \ref{propstab} and the proof of Theorem \ref{theo3} is complete.
\end{proof}

\section{Self-similar solutions in the pure Manev case}\label{sectselfsim}

From now on, we only consider the pure Manev case ($\delta=0$, $\kappa=1$). This section is devoted to the proof of Theorem \ref{thm6}. Let $Q$ be a steady state solution to \fref{vm} which minimizes \fref{def-K}.

We seek, for $b$ small enough, a compactly supported  and spherically symmetric stationary profile $Q_b\in \mathcal{C}^0(\R^6)$, with $\phi_{Q_b}\in \mathcal C^1(\RR^3)$, such that
\be Q_b\left(\frac{x}{\lambda(t)},\lambda(t)v\right) \ \ \textrm{with} \ \lambda(t)=\sqrt{2b(T-t)} \label{selfsim}\ee 
is a solution to \eqref{vm} in the pure Manev case. We insert \fref{selfsim} in \fref{vm}, use the identity $\dot{\lambda}=-b/\lambda$ and then get that $Q_b$ has to satisfy  (at least in the weak sense) the following equation in the self-similar variables $(\tilde{x},\tilde{v})=(\frac{x}{\lambda(t)},\lambda(t)v)$ (which are renoted $(x,v)$ for simplicity),
\be v\cdot \nabla_x Q_b - \nabla_x \phi_{Q_b} \cdot \nabla_v Q_b +b \left(x\cdot \nabla_x Q_b -v \cdot \nabla_v Q_b\right)=0.\label{jeans}\ee
We first observe that a function of the form 
$$g(x,v)=F\left(  \frac{|v|^2}{2}+b x\cdot v + \phi_g(x)\right)$$
satisfies this equation. However, for non trivial profiles $F$ and for $b\neq 0$, it can be seen that such a function does not belong to $\mathcal{E}_j$ (it has always infinite mass and energy). To solve this problem we proceed as in \cite{merle-raphael,jams} and introduce a radial cut-off function $\chi$ from $\R^3$ to $[0,1]$ such that 
$$	\chi(x)=1 \ \textrm{for}\  |x|<r_\chi \ \ \textrm{and}\ \    \chi(x)=0 \ \textrm{for}\  |x|>R_\chi=2r_\chi,$$
where $r_\chi>0$ will be defined later on (see \fref{choix rchi}). We shall prove the existence of a function having the form
\be
\label{desired}Q_b(x,v)=F\left(  \frac{|v|^2}{2}+b\chi(x) x\cdot v + \phi_{Q_b}(x)\right),
\ee
which is compactly supported in $\{|x|<r_\chi\}$. Here, the function $\phi_{Q_b}$ belongs to $\mathcal C^1$ and the function $F$ is a continuous nonnegative function on $\RR$, which is $\mathcal C^1$ on $]-\infty,e_0[$ for some $e_0<0$ and vanishes on $[e_0,+\infty[$. Hence, we have
$$Q_b(x,v)=F\left(  \frac{|v|^2}{2}+bx\cdot v + \phi_{Q_b}(x)\right),\qquad \forall (x,v)\in \RR^6\mbox{ such that }|x|<r_\chi,$$
which is sufficient to deduce that $Q_b$ is a solution to \eqref{jeans}. 

To construct such self-similar profile $Q_b$, it is natural to use a minimization problem with constraints. However, if the number of constraints is finite, which is the case for instance if we prescribe the mass and a Casimir functional as in Section \ref{sect1}, then the uniqueness of the minimizer is not garanted. This uniqueness property will be crucial to ensure that  $Q_b$ is in the vicinity of $Q$. Therefore, we will choose a variational problem with an infinite number of constraints which, using Lemma \ref{theo2}, will lead to a unique minimizer. More precisely, we define the following set of constraints:
\be
\label{eq-def}
\mbox{Eq}(Q)=\left\{f\in \calE_j:\,f \mbox{ is equimeasurable with }Q\right\}.
\ee
Then we consider the associated variational problem
\be T_b :=\inf \left\{T_b(f):\;  \ f\in \mbox{Eq}(Q),\ \textrm{spherically symmetric with}\ E_{pot}(f)=E_{pot}(Q) \right\},\label{minTb} \ee
where
\be T_b(f)=\int_{\R^6} \left(  \frac{|v|^2}{2}+b\chi(x) x\cdot v \right) f(x,v) dx dv\label{def_Tb}\ee
and we claim the following Proposition.
\begin{proposition}\label{lemmasimilar} Let $Q$ be a steady state of \eqref{vm} in the pure Manev case which minimizes $\eqref{def-K}$. Then there exists $b^\ast>0$ such that the following holds. For all $b\in [0,b^\ast]$, the variational problem \eqref{minTb} has at least one minimizer. Moreover, there exists a family of minimizers $Q_b$ of \eqref{minTb}, taking the form
$$ Q_b(x,v)=F_{Q_b}\left(\frac{|v|^2}{2}+b\chi(x) x\cdot v +\nu_b \phi_{Q_b}(x)\right),$$
where $\nu_b$ is a positive constant and $\chi(x)$ has been defined above, and such that, as $b\rightarrow 0$, we have the convergences $\nu_b\rightarrow 1$ and $Q_b\rightarrow Q_0=Q$ in $\mathcal{E}_j$. Here, the function $Q_b$ has its support in $\{(x,v)\in \R^6, |x|<r_\chi\}$,
 the function $\phi_{Q_b}$ belongs to $\mathcal C^1$ and the function $F_{Q_b}$ is a continuous nonnegative function on $\RR$, which is $\mathcal C^1$ on $]-\infty,e_b[$ for some $e_b<0$ and vanishes on $[e_b,+\infty[$. 
 \end{proposition}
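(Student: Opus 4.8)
The plan is to establish Proposition \ref{lemmasimilar} by a direct variational argument, closely parallel to the existence proof in Theorem \ref{theo1} but now using the infinite family of equimeasurability constraints \fref{eq-def} together with the prescribed value $E_{pot}(f)=E_{pot}(Q)$. First I would fix $b\geq 0$ and take a minimizing sequence $f_n$ for \fref{minTb}, noting that the constraint set is nonempty (it contains $Q$ itself when $b=0$, and for $b>0$ one checks that $T_b(Q)<+\infty$ so $T_b\leq T_b(Q)<+\infty$). Since all $f_n$ are equimeasurable with $Q$, they are automatically bounded in every $L^p$, $1\leq p\leq+\infty$, and $\|j(f_n)\|_{L^1}=\|j(Q)\|_{L^1}$ is fixed; the term $b\chi(x)x\cdot v$ is bounded by $\tfrac{|v|^2}{2}+b^2 R_\chi^2/2$ on the support of $\chi$, so a uniform bound on $T_b(f_n)$ gives a uniform bound on $\||v|^2 f_n\|_{L^1}$, hence boundedness in $\calE_j$. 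Then I would invoke the compactness machinery of Appendix \ref{appC} (Lemma \ref{propC2}) to extract $f_b$ with $f_n\rightharpoonup f_b$ in $L^p$ and $E_{pot}(f_n)\to E_{pot}(f_b)$, so that the constraint $E_{pot}(f_b)=E_{pot}(Q)>0$ passes to the limit and in particular $f_b\neq 0$. Lower semicontinuity of $f\mapsto \||v|^2 f\|_{L^1}$ plus weak convergence of the linear term $\int b\chi(x)x\cdot v\, f_n$ (which is continuous for weak-$L^p$ convergence against the bounded compactly supported weight, after localizing in velocity and using the $|v|^2$ bound to control tails) gives $T_b(f_b)\leq T_b$; equimeasurability is preserved in the limit by the same argument used at the end of the proof of Theorem \ref{theo3} (comparing level sets via the strong $L^1$ convergence that follows from the $L^p$ bound and a.e.\ convergence). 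Hence $f_b$ is a minimizer.

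Next I would derive the Euler--Lagrange equation. The key point is that $f_b$ is a minimizer of $T_b$ among functions \emph{equimeasurable with $Q$}, which by the bathtub principle forces $f_b$ to be the monotone rearrangement of $Q$ with respect to the microscopic energy $e_b(x,v)=\tfrac{|v|^2}{2}+b\chi(x)x\cdot v+\nu_b\phi_{f_b}(x)$ for a suitable Lagrange multiplier $\nu_b>0$ associated to the single scalar constraint $E_{pot}(f)=E_{pot}(Q)$. Concretely, among all functions equimeasurable with $Q$, the one minimizing $\int(\tfrac{|v|^2}{2}+b\chi x\cdot v+\nu_b\phi_{f})f$ (for the self-consistent $\phi_f$ frozen at $f_b$) is the one that is a nonincreasing function of $e_b$; the extra multiplier $\nu_b$ is exactly what is needed so that the $E_{pot}$ constraint is met, and standard rearrangement monotonicity shows $\nu_b>0$. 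This yields the stated form $Q_b(x,v)=F_{Q_b}(e_b(x,v))$ with $F_{Q_b}$ the (generalized) inverse distribution profile of $Q$; since $Q=(j')^{-1}((\cdot)_+/\mu)$-type by Theorem \ref{theo1}, $F_{Q_b}$ inherits the regularity: continuous on $\R$, $\mathcal C^1$ on $]-\infty,e_b[$, vanishing on $[e_b,+\infty[$. The compact support in $\{|x|<r_\chi\}$ is arranged by choosing $r_\chi$ large enough (this is the promised \fref{choix rchi}): once one has an a priori bound on the size of the support of any minimizer (coming from $\lambda_b<0$ and $\phi_{Q_b}\to 0$ at infinity, exactly as in Step 3 of Theorem \ref{theo1}), picking $r_\chi$ above that bound guarantees $\chi\equiv 1$ on the support, so $Q_b$ solves \fref{jeans}. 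The regularity $\phi_{Q_b}\in\mathcal C^1$ follows by re-running the bootstrap of Step 3 of Theorem \ref{theo1}: the fractional-Laplacian/Newtonian regularity theory applies verbatim because the microscopic energy is still a continuous function of $x$ with $\phi_{Q_b}$ the only nonsmooth ingredient.

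The remaining, and I expect the most delicate, point is the convergence $Q_b\to Q$ and $\nu_b\to 1$ as $b\to 0$. Here the strategy is: (1) uniform bounds in $b$. Since $T_b\leq T_b(Q)=\||v|^2Q\|_{L^1}/2+b\int\chi x\cdot v\,Q\,dxdv$ and the last integral vanishes by the oddness of $x\cdot v$ in $v$ when $Q$ is a function of $|v|^2$ — wait, $Q$ is a function of $\tfrac{|v|^2}{2}+\phi_Q$, so $\int\chi(x)x\cdot v\,Q\,dxdv=0$ exactly — we get $T_b\leq\tfrac12\||v|^2Q\|_{L^1}$ uniformly; combined with the equimeasurability (fixed $L^p$ norms and fixed $\|j\|_{L^1}$) this bounds $Q_b$ in $\calE_j$ uniformly in $b$. (2) Extract a weak limit $Q_\star$ along $b_n\to 0$; by the compactness of Appendix \ref{appC}, $E_{pot}(Q_{b_n})\to E_{pot}(Q_\star)=E_{pot}(Q)$, $Q_\star$ is equimeasurable with $Q$, spherically symmetric, and $\||v|^2 Q_\star\|_{L^1}\leq \liminf T_{b_n}(Q_{b_n})\leq \tfrac12\||v|^2Q\|_{L^1}$... but that last inequality is not quite what one wants; instead one argues $T_{b_n}(Q_{b_n})\leq T_{b_n}(Q)\to \tfrac12\||v|^2Q\|_{L^1}$, while $T_{b_n}(Q_{b_n})=\||v|^2Q_{b_n}\|_{L^1}/2+o(1)$ since the cross term is $O(b_n)\||v|^2Q_{b_n}\|_{L^1}$, giving in the limit $\||v|^2Q_\star\|_{L^1}\leq \||v|^2Q\|_{L^1}$. (3) Hence $Q_\star$ is equimeasurable with $Q$, has $E_{pot}(Q_\star)=E_{pot}(Q)$ and kinetic energy $\leq$ that of $Q$; but $K(Q_\star)=\||v|^2Q_\star\|_{L^1}/E_{pot}(Q)\leq K(Q)=J=1$, and since $Q$ minimizes \fref{def-K} we also have $K(Q_\star)\geq J=1$, forcing $K(Q_\star)=1$, i.e.\ equal kinetic energies, and $Q_\star$ a minimizer of \fref{def-K}. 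Then Lemma \ref{theo2}(ii) — two equimeasurable minimizers of \fref{def-K} with the same kinetic energy coincide up to translation; spherical symmetry kills the translation — gives $Q_\star=Q$. Since the weak limit is the same $Q$ along every subsequence, and equality of $E_{pot}$ together with equimeasurability upgrades weak $L^p$ convergence to strong $\calE_j$ convergence (same argument as the end of Proposition \ref{propstab}), we conclude $Q_b\to Q$ in $\calE_j$; finally $\nu_b\to 1$ follows by passing to the limit in the Euler--Lagrange relation for $\nu_b$ (which at $b=0$ characterizes $\nu_0=1$ by comparison with the Euler--Lagrange equation for $Q$ in Theorem \ref{theo1}, using that $\phi_{Q_b}\to\phi_Q$ in $\mathcal C^1$ by the regularity bootstrap applied uniformly). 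The main obstacle is ensuring all the constraints ($E_{pot}$, equimeasurability, spherical symmetry) pass robustly to the weak limit \emph{simultaneously} and that the limit object is forced, via Lemma \ref{theo2}, to be exactly $Q$ rather than merely some minimizer of \fref{def-K}; this is where the choice of an infinite family of constraints, rather than finitely many Casimirs, is essential.
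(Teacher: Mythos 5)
Your overall skeleton matches the paper's (minimize $T_b$ under equimeasurability plus the scalar constraint $E_{pot}(f)=E_{pot}(Q)$, identify the minimizer with a rearrangement, then pass $b\to 0$ and invoke Lemma~\ref{theo2}), but there are two genuine gaps at the heart of the argument, and one place where you take a different — and less controlled — route than the paper.

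\emph{Gap 1: equimeasurability does not pass to weak limits.} You take a minimizing sequence $f_n$, extract $f_n\rightharpoonup f_b$ in $L^p$, and assert the limit $f_b$ is still in $\mbox{Eq}(Q)$ ``by the same argument used at the end of the proof of Theorem~\ref{theo3}''. That argument requires strong $L^1$ convergence (it compares measures of level sets using $\|f_n-Q_1\|_{L^1}\to 0$); weak $L^p$ convergence alone gives you none of that, and a weak limit of equimeasurable functions is generically \emph{not} equimeasurable (think of oscillations). The paper circumvents this entirely: Lemma~\ref{lemma_epot} replaces $f_n^b$ by $Q^{\ast b,\nu_n^b\phi_{f_n^b}}$, which is equimeasurable with $Q$ \emph{by construction} and which converges \emph{strongly} in $L^1\cap L^p$ by Lemma~\ref{rearrangement}~(iii). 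The rearrangement step is not cosmetic — it is what produces an admissible limit.

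\emph{Gap 2: existence of the multiplier $\nu_b$.} You write that ``the extra multiplier $\nu_b$ is exactly what is needed so that the $E_{pot}$ constraint is met'' and invoke ``standard rearrangement monotonicity''. This is the hardest part of the proof and it is left unproved. The paper establishes it via an intermediate-value argument for the continuous map $\nu\mapsto E_{pot}(Q^{\ast b,\nu\phi_f})$: one needs a value of $\nu$ giving potential energy below $E_{pot}(Q)$ (easy, via the kinetic bound \fref{control_kinetic} and \fref{boundbykinetic}) and a value giving potential energy above $E_{pot}(Q)$. The latter is nontrivial and is proved by contradiction in Lemma~\ref{trouve-bast}: if $E_{pot}(Q^{\ast b,\nu\phi_f})$ stayed $\le E_{pot}(Q)$ for all large $\nu$, then one derives $\|\phi_f\|_{L^\infty}\|f\|_{L^1}=E_{pot}(f)$, which forces $\phi_f$ to be constant on the support of $\rho_f$ — impossible for the limiting minimizer by the explicit Euler--Lagrange structure from Theorem~\ref{theo1}. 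Without that lemma, your Lagrange multiplier argument has no foundation, and $b^\ast$ cannot be produced (it is precisely $\min(b_1^\ast,b_2^\ast)$, where $b_2^\ast$ comes from Lemma~\ref{trouve-bast}).

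\emph{A different route for $\nu_b\to 1$.} You propose passing to the limit in the Euler--Lagrange relation using $\phi_{Q_b}\to\phi_Q$ in $\mathcal C^1$, obtained ``by the regularity bootstrap applied uniformly''. The paper instead derives a virial identity (Lemma~\ref{lemvirial}): multiplying the self-similar equation by $x\cdot v$ and integrating gives $\nu_b E_{pot}(Q)-\int|v|^2 Q_b = b\int(x\cdot v)(x\cdot\nabla\chi)Q_b\,dx\,dv=O(b)$, and since $\int|v|^2 Q_b\to\int|v|^2 Q=E_{pot}(Q)$ one reads off $\nu_b\to 1$. This avoids having to upgrade the $\calE_j$ convergence of $Q_b\to Q$ to uniform $\mathcal C^1$ convergence of potentials, which would require uniform H\"older bounds that you would still have to establish. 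Your route is not obviously wrong, but it requires additional work that the virial argument renders unnecessary. Similarly, your choice of $r_\chi$ must be made \emph{before} $Q_b$ is constructed (since $r_\chi$ enters the definition of $T_b$); the paper achieves this by defining $r_\chi$ in \fref{choix rchi} in terms of $Q$ and $a_{0,\phi_Q}^{-1}(r_\ast)$ only, then verifying a posteriori that $\mbox{Supp}(\rho_{Q_b})\subset B(0,r_\chi)$ for $b$ small — your formulation (``once one has an a priori bound on the size of the support of any minimizer'') risks circularity if stated carelessly.
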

This result will be proved in the sections below. Now, using this Proposition \ref{lemmasimilar}, we end the proof of Theorem \ref{thm6}. To obtain the desired form \fref{desired} we first rescale the function $Q_b$ given by this proposition as follows:
$$\bar{Q}_b(x,v)=Q_b\left(x, \frac{v}{\nu_b}\right)$$
and set $\tilde{b}=\nu_b b$. This ensures that $\bar{Q}_b$ is a function of 
$$\frac{|v|^2}{2}+\tilde{b} \chi(x) x\cdot v + \phi_{\bar{Q}_b}(x).$$
Denoting back $\bar{Q}_b$ and $\tilde b$ by $Q_b$ and $b$ respectively, we can conclude that we have constructed a function $Q_b$ of the desired form. To prove that this function is a solution to \fref{jeans}, we need the continuity of $Q_b$ on $\RR^6$ and its $\mathcal C^1$ regularity in the interior of its support. This regularity can be deduced in a similar way as for $Q$, see Section \ref{subsect2}, Step 3.
The proof of Theorem \ref{thm6} is complete. It remains to prove Proposition \ref{lemmasimilar}. For his purpose, we need some tools which we introduce in the following subsection.

\subsection{Reduction to a functional of a modified microscopic energy}\label{subsectrearrang}
Let us first define 
\be
\label{defphirad}
\Phi_{rad}=\{\phi_f,\ f\in \mathcal{E}_{j,rad}\},\ \textrm{with} \ \mathcal{E}_{j,rad}=\{f\in \mathcal{E}_j,\ f \  \textrm{spherically symmetric}\}.
\ee
From Lemma \ref{propC1}, we deduce that there exists a constant $C>0$ such that, for all $\phi_f \in \Phi_{rad}$, we have 
\be \phi_f(r)\geq -\frac{C}{r^{3/2}}\|f\|_{\calE_j}.\label{controlphi} \ee
Moreover, by interpolation and using (H2), we have
$$\|\rho_f\|_{L^1}+\|\rho_f\|_{L^\alpha}\leq C\|f\|_{\calE_j},\quad \alpha=\frac{5p-3}{3p-1}.$$
Therefore, from Hardy-Littlewood-Sobolev, one deduces that $\phi_f\in L^k$, for all $\frac{3}{2}<k\leq \frac{3(5p-3)}{4p}$, and in particular, since $p>3$, one has
\be
\label{L3}
\|\phi_f\|_{L^3}\leq C\|f\|_{\calE_j}.
\ee

\bs
As we said, we shall construct $Q_b$ as a minimizer of the functional $T_b$ defined by \fref{def_Tb} under equimeasurability constraint. Similarly as in \cite{LM3,LMR-inv}, the key tool to study this variational problem is the symmetrization with respect to the microscopic energy $\frac{|v|^2}{2}+b\chi(x) x\cdot v + \phi(x)$. Before defining this symmetrization, we need to introduce and study the Jacobian associated with this change of variable.
\begin{lemma}[Definition and properties of the Jacobian $a_{b,\phi}$]\label{jacobian}
Let $\phi \in \Phi_{rad}\backslash\{0\}$ and $b\in \R_+$. Defining the Jacobian function $a_{b,\phi} : \R_{-}^{\ast}\rightarrow \R^{+}$ as
$$a_{b,\phi}(e)={\rm meas}\left\{(x,v) \in \R^6,\ \ \frac{|v|^2}{2}+b\chi(x) x\cdot v +\phi(x)<e\right\}, \qquad \forall e<0,$$
where $"{\rm meas}"$ stands for the Lebesgue measure on $\RR^6$, we have the following properties.\\
(i) The Jacobian $a_{b,\phi}$ is given by the explicit formula
\be\forall\ e<0, \ \  a_{b,\phi}(e)=\frac{32\pi^2 \sqrt{2}}{3}\int_{0}^{+\infty}\left(e-\phi(r)+\frac{(b\chi(r)r)^2}{2}\right)^{3/2}_{+}r^2 d r. \label{expression_a}\ee
(ii) Let $e_{b,\phi}={\rm inf \,ess} \left[ \phi(r)- \frac{(b\chi(r)r)^2}{2}\right] \in \R_{-}^{\ast}\cup\{-\infty\}$. Then $a_{b,\phi}(e)=0$ for all $e\leq e_{b,\phi}$ and $a_{b,\phi}$ is a strictly increasing $\mathcal{C}^1$ diffeomorphim from $(e_{b,\phi},0)$ onto $\R_{+}^{\ast}$. We will denote by $a_{b,\phi}^{-1}$ the inverse function of $a_{b,\phi}$.\\
(iii) Let $(f_n)$ be a bounded sequence in  $\mathcal{E}_{j,rad}\backslash \{0\}$ and let $(e_n,b_n)$ be a sequence in $\R_{-}^{\ast}\times\R^+ $ and assume that there exist $f \in \mathcal{E}_{j,rad}\backslash \{0\}$, $e\in \R^{-}\cup\{-\infty\}$ and $b\in\R^+$ such that
$$f_n \rightharpoonup f \ \textrm{in} \ L^p(\R^6),\ \ e_n\rightarrow eÊ\ \textrm{and} \ b_n\rightarrow b.$$
Then, by denoting $a_{b,\phi_f}(-\infty)=0$ and $a_{b,\phi_f}(0)=+\infty$, we have\\
$$a_{b_n,\phi_{f_n}}(e_n) \rightarrow a_{b,\phi_f}(e)\quad \mbox{and}\quad \forall s>0,\quad a_{b_n,\phi_{f_n}}^{-1}(s) \rightarrow a_{b,\phi_f}^{-1}(s).$$
\end{lemma}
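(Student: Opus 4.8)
The plan is to handle the three items in order: (i) and (ii) by explicit computation, and (iii) by a compactness argument resting on the convergence of the potentials. For (i), fix $e<0$ and compute the six-dimensional measure by Tonelli's theorem, integrating first in $v$. Completing the square, $\frac{|v|^2}{2}+b\chi(x)\,x\cdot v+\phi(x)=\frac12|v+b\chi(x)x|^2+\phi(x)-\frac{(b\chi(x)|x|)^2}{2}$, so for fixed $x$ the set of admissible $v$ is a Euclidean ball of radius $\bigl(2(e-\phi(x)+\tfrac{(b\chi(x)|x|)^2}{2})_+\bigr)^{1/2}$, of measure $\tfrac{4\pi}{3}2^{3/2}(e-\phi(x)+\tfrac{(b\chi(x)|x|)^2}{2})_+^{3/2}$; since $\phi$ and $\chi$ are radial, passing to spherical coordinates in $x$ yields \eqref{expression_a}. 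The integral is finite: by \eqref{controlphi} the integrand is supported in a bounded set of $r$ for each fixed $e<0$, and near $r=0$ it is dominated by $C r^{-1/4}$, which is integrable.

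For (ii), note that in the pure Manev case $\phi=\phi_f^M\le 0$ everywhere and $\phi(x)<0$ for every $x$ as soon as $f\not\equiv0$, so $e_{b,\phi}$ is a well-defined essential infimum lying in $\R_-^\ast\cup\{-\infty\}$; for $e\le e_{b,\phi}$ the integrand in \eqref{expression_a} vanishes a.e., hence $a_{b,\phi}(e)=0$. On $(e_{b,\phi},0)$ I would differentiate under the integral sign, the domination being exactly as in (i), to get $a_{b,\phi}'(e)=16\pi^2\sqrt2\int_0^{+\infty}(e-\phi(r)+\tfrac{(b\chi(r)r)^2}{2})_+^{1/2}r^2\,dr$, which is strictly positive because, $e$ being $>e_{b,\phi}$, the set $\{r>0:\ \phi(r)-\tfrac{(b\chi(r)r)^2}{2}<e\}$ has positive measure; so $a_{b,\phi}$ is a strictly increasing $\mathcal C^1$ function there. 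Surjectivity onto $\R_+^\ast$ follows from dominated convergence as $e\to e_{b,\phi}^+$ (limit $0$) and from monotone convergence as $e\to 0^-$: the limiting integral $\int_0^{+\infty}(-\phi(r)+\tfrac{(b\chi(r)r)^2}{2})_+^{3/2}r^2\,dr$ diverges since $\chi$ has compact support and, because $\rho_f\ge0$ with $\|\rho_f\|_{L^1}>0$, one has $-\phi_f^M(r)\gtrsim r^{-2}$ for $r$ large, so the integrand is $\gtrsim r^{-1}$ there.

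The core is (iii). The first step is to show that the hypotheses force $\phi_{f_n}\to\phi_f$ pointwise a.e.\ (indeed uniformly on compact subsets of $(0,\infty)$). For radial densities one has the spherically-averaged representation $\phi_f^M(r)=-\frac{1}{\pi r}\int_0^{+\infty}s\,\rho_f(s)\ln|\tfrac{r+s}{r-s}|\,ds$ of Proposition \ref{propC1}; splitting the $s$-integral at a large radius $R$, the part on $(0,R)$ converges by $L^\alpha$--$L^{\alpha'}$ duality (the kernel $s\mapsto\tfrac sr\ln|\tfrac{r+s}{r-s}|$ lies in $L^{\alpha'}_{\mathrm{loc}}$) while the tail is bounded by $\tfrac{C}{rR}\|\,|v|^2f_n\,\|_{L^1}$ uniformly in $n$; this is essentially the content of Appendix \ref{appC} (Proposition \ref{propC1}, Lemma \ref{propC2}). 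Granting this, for finite $e<0$ I pass to the limit in \eqref{expression_a} by dominated convergence, the dominating function $\bigl(\tfrac e2+C\|f_n\|_{\calE_j}r^{-3/2}+C(b_nR_\chi)^2\bigr)_+^{3/2}r^2$ being uniformly integrable by \eqref{controlphi} and the boundedness of $(f_n)$ in $\calE_j$; for $e=-\infty$ the same bound gives $a_{b_n,\phi_{f_n}}(e_n)\to0$, its support collapsing to $\{0\}$, and for $e=0$ Fatou's lemma together with the divergence from (ii) gives $a_{b_n,\phi_{f_n}}(e_n)\to+\infty$. For the inverse, set $e_n^\ast=a_{b_n,\phi_{f_n}}^{-1}(s)$ with $s>0$ fixed: the limits just proved exclude $e_n^\ast\to0$ and $e_n^\ast\to-\infty$, so $(e_n^\ast)$ remains in a compact subset of $\R_-^\ast$; any subsequential limit $\bar e$ satisfies $a_{b,\phi_f}(\bar e)=s>0$ by the finite-$e$ continuity just established, whence $\bar e=a_{b,\phi_f}^{-1}(s)$ by the strict monotonicity of (ii), and therefore $e_n^\ast\to a_{b,\phi_f}^{-1}(s)$.

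I expect the only genuine difficulty to be the convergence $\phi_{f_n}\to\phi_f$ in (iii): the Manev kernel $|x-y|^{-2}$ decays too slowly for a naive weak-convergence argument, so one must really use the combined $L^1\cap L^\alpha$ control on the densities together with the kinetic-energy bound to control the tail; all the remaining steps are routine applications of Tonelli, dominated convergence and Fatou.
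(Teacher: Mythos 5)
Your proof is correct and follows essentially the same route as the paper for parts (i), (ii), and for the finite-$e$ and $e=-\infty$ cases of (iii); the inverse-function argument is also the paper's. The one place where you genuinely depart from the paper is the case $e=0$ in (iii). You invoke Fatou's lemma: once $\phi_{f_n}\to\phi_f$ a.e.\ (up to a subsequence, via Lemma \ref{propC2}), the pointwise $\liminf$ of the integrands equals $\bigl(-\phi_f(r)+\tfrac{(b\chi(r)r)^2}{2}\bigr)_+^{3/2}r^2$, whose integral is $+\infty$ by the divergence argument of (ii), forcing $a_{b_n,\phi_{f_n}}(e_n)\to+\infty$. The paper instead first drops to $b=0$ via $a_{b_n,\phi_{f_n}}(e_n)\geq a_{0,\phi_{f_n}}(e_n)$, lower-bounds the integrand on the complement of $\Omega_n=\{|\phi_{f_n}(x)|<\tfrac{C_f}{2(1+|x|^2)}\}$, and shows $\mathrm{meas}(\Omega_n\cap B(0,R))\to 0$ from the $L^3$ convergence of the potentials. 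Your Fatou route is shorter and equally rigorous, provided you explicitly handle the subsequence issue (a.e.\ convergence of $\phi_{f_n}$ is only along a subsequence; upgrade to the full sequence by the usual ``every subsequence has a further subsequence'' argument, which the paper needs too). Two small inaccuracies in your sketch of $\phi_{f_n}\to\phi_f$: the tail of the angular-averaged Manev integral is controlled by $\|f_n\|_{L^1}/R^2$ (the mass governs the far-field decay), not by $\|\,|v|^2 f_n\,\|_{L^1}/(rR)$; and uniform convergence on compacts of $(0,\infty)$ is more than you need and more than the sketch establishes --- a.e.\ convergence along a subsequence, which Lemma \ref{propC2} already gives through $L^q$ convergence, suffices for both the DCT and the Fatou steps.
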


\begin{proof} {\em Proof of (i)}. We remark that 
$$\frac{|v|^2}{2}+b\chi(x) x\cdot v=\frac{1}{2} |v+b\chi(x) x|^2-\frac{(b\chi(x)|x|)^2}{2}.$$
Hence, by performing the change of variable $w=v+b\chi(x) x$ with respect of the variable $v$, and passing by the spherical coordinate we find
$$a_{b,\phi}(e)=(4\pi)^2 \int r^2dr \int \un_{\{\frac{u^2}{2}+\phi(r)-\frac{(b\chi(r)r)^2}{2}<e\}}u^2 d u.$$
Formula \fref{expression_a} follows. Since $\phi\in  \Phi_{rad}\backslash\{0\}$, we have both the control \fref{controlphi} and the fact that the set $\{x\in\R^3,\ \phi(x)<e\}$ is bounded. These two properties ensure that the integral in \fref{expression_a} is finite.

\ms
\ni
 {\em Proof of (ii)}. By dominated convergence, we deduce that $e\mapsto a_{b,\phi}(e)$ is $\mathcal{C}^1$ on $\R_{-}^{\ast}$ with 
$$a_{b,\phi}'(e)=16\pi^2 \sqrt{2}\int_{0}^{+\infty}\left(e-\phi(r)+\frac{(b\chi(r)r)^2}{2}\right)^{1/2}_{+}r^2 d r \geq 0.$$
For $e_{b,\phi}<e<0$, we have clearly $a_{b,\phi}'(e)>0$. Let us prove that $a_{b,\phi}(e)$ converges to $+\infty$ as $e\rightarrow 0$. We observe that, for $f\in\mathcal{E}_{j,rad}\backslash \{0\}$ there exists $R>0$ such that $\|f_R \|_{L^1}=\frac 12\|f\|_{L^1}$, where $f_R=f \un_{|x|<R}$. Thus 
one can prove that 
\be |\phi_f(r)|\geq |\phi_{f_R}(r)| \sim \frac{\|f_R \|_{L^1}}{2\pi^2 r^2} \ \ \textrm{as} \ r \rightarrow +\infty, \label{controlinfphi}\ee
which gives 
$$a_{b,\phi}(e)\geq C\int_0^{+\infty}\left(e+\frac{C}{1+r^2}\right)^{3/2}r^2dr.$$
Hence, $a_{b,\phi}(e)\to +\infty$ as $e\to 0$. Since, we have clearly $a_{b,\phi}(e_{b,\phi})=0$, item {\em (ii)} is proved.

\ms
\ni
 {\em Proof of (iii)}.  The sequence $(f_n)$ is bounded in $\mathcal{E}_{j,rad}$. Then, from Lemma \ref{propC2} in Appendix B, up to extraction of a subsequence, we have in particular
$$ \phi_{f_n} \rightarrow \phi_f \ \textrm{almost everywhere}.$$
From \eqref{controlphi} and the boundedness of $(f_n)$ in $\mathcal{E}_{j,rad}$, we have 
$$\phi_{f_n} (r)\geq -\frac{C}{r^{3/2}}.$$
Thus
$$e_n- \phi_{f_n}(r)+\frac{(b_n\chi(r)r)^2}{2}\leq e_n+\frac{C}{r^{3/2}}+\frac{(b_n\chi(r)r)^2}{2} \rightarrow e_n$$
when $r$ converges to the infinity. Since $e_n\to e$ as $n\to +\infty$, this implies that, for $e\in [-\infty,0)$, the function 
$$r\mapsto \left(e_n- \phi_{f_n}(r)+\frac{(b_n\chi(r)r)^2}{2}\right)_+$$ is uniformly compactly supported. Therefore, by dominated convergence, we deduce that 
$$a_{b_n,\phi_{f_n}}(e_n) \rightarrow a_{b,\phi_f}(e).$$
Let us now treat the case $e=0$. Remark first that for all $n \in \NN$, we have
$$a_{b_n,\phi_n}(e_n) \geq a_{0,\phi_n}(e_n),$$
and thus it is sufficient to prove that $a_{0,\phi_n}(e_n)$ converges to $a_{b,\phi}(0)=+\infty$ as $n \rightarrow +\infty$.
Let $M>0$ be an arbitrary constant. We know that 
\be \forall x\in \RR^3, \qquad\left| \phi_f(x) \right| \geq \frac{C_f}{1+|x|^2}.\label{controlCf}\ee
Denote $\Omega_n=\left\{x\in \RR^3, \, | \phi_{f_n}(x) | <  \frac{C_f}{2(1+|x|^2)} \right\} $ and let $e_0<0$ be such that 
$$ \int_{\RR^3}  \left(e_0+\frac{C_f}{2(1+|x|^2)}\right)^{3/2}_+ dx > \frac{3M}{4\pi \sqrt{2}}.$$
For $n$ large enough, we have $e_n>e_0$ and thus
\be \begin{array}{rcl}
\dis a_{0,\phi_n}(e_n) & \geq & \dis  \frac{ 8 \pi\sqrt{2}}{3} \int_{\RR^3\backslash \Omega_n}  \left(e_0+\frac{C_f}{2(1+|x|^2)}\right)^{3/2}_+ dx\\
 & \geq & \dis 2M - \frac{ 8 \pi\sqrt{2}}{3} \int_{ \Omega_n}  \left(e_0+\frac{C_f}{2(1+|x|^2)}\right)^{3/2}_+ dx.
\end{array} \ee
To prove that the second term converges to $0$ as $n \rightarrow +\infty$, we remark that the set of integration of this term has the form $\Omega_n \cap B(0,R)$ with $R>0$ independent of $n$. Now, from \eqref{controlCf} and from the definition of $\Omega_n$,
$$\left\| \phi_{f_n}(x)-\phi_{f}(x) \right\|_{L^3(\RR^3)}^3 \geq  \int_{\Omega_n} \left( \frac{C_f}{2(1+|x|^2)} \right)^3 dx\geq  \int_{\Omega_n\cap B(0,R)} \left( \frac{C_f}{2(1+|x|^2)} \right)^3 dx.$$
Since $\phi_{f_n}$ converges to $\phi_{f}$ in $L^3(\RR^3)$ by Lemma \ref{propC2}, we deduce that the measure of the set $\Omega_n \cap B(0,R)$ converges to $0$, which implies that the integral
$$\int_{ \Omega_n\cap B(0,R)}  \left(e_0+\frac{C_f}{2(1+|x|^2)}\right)^{3/2}_+ dx$$
converges to $0$ as $n\rightarrow +\infty$. Thus, for $n$ large enough, $a_{b_n,\phi_n}(e_n) \geq a_{0,\phi_n}(e_n) \geq M.$ which concludes the proof of the convergence of $a_{b_n,\phi_n}(e_n)$. 

Now, we shall prove that for all $s\in \RR_+^*$, we have $a_{b_n,\phi_{f_n}}^{-1}(s) \rightarrow a_{b,\phi_f}^{-1}(s)$. Let $e_n:=a_{b_n,\phi_{f_n}}^{-1}(s)$. We know from the above result that, if $e_n$ converges to $e\in [-\infty,0]$, then 
$$s=a_{b_n,\phi_{f_n}}(e_n) \rightarrow a_{b,\phi_f}(e).$$
Hence, the sequence $(e_n)$ converges to $e=a_{b,\phi_f}^{-1}(s)$. The proof of Lemma \ref{jacobian} is complete.
\end{proof}

\bs
Now, we are ready to construct our symmetrization of $f$ with respect to a given microscopic energy $\frac{|v|^2}{2}+b\chi(x) x\cdot v +\phi(x)$. To that purpose, we first recall that the Schwarz symmetrization  $Q^\ast$ of the function $Q$ is the unique nonincreasing function on $\R^+$ such that $$\forall \lambda>0,\qquad \mu_{Q^*}(\lambda)=\mu_Q(\lambda),$$
where $$\mu_{Q^*}(\lambda) ={\rm meas}\{s\in \R^+\ :\ Q^\ast(s) > \lambda\},\ \ \mu_Q(\lambda)={\rm meas}\left\{(x,v)\in \RR^6:\,Q(x,v)>\lambda\right\},$$
and where the notation $"{\rm meas}"$ stands for the Lebesgue measure respectively on $\RR^+$ and $\RR^6$.
Note that $Q^*$ is compactly supported and continuous (since $Q$ is compactly supported and continuous). We shall denote 
\be
\label{rstar}
r_\ast=\min\{r\in \R^+: Q^\ast(r)=0  \}.
\ee
\sloppy
\begin{lemma}[Rearrangement with respect to the microscopic energy]\label{rearrangement}
 Let $\phi \in \Phi_{rad}\backslash\{0\}$ and $b\geq 0$. We denote by $Q^{\ast b,\phi}$ the nonincreasing continuous function of the microscopic energy defined by
$$Q^{\ast b,\phi}(x,v)=\left\{\begin{array} {lcl}  Q^\ast \circ a_{b,\phi} \left(\frac{|v|^2}{2}+b\chi(x) x\cdot v +\phi(x)\right) & \textrm{if} & \frac{|v|^2}{2}+b\chi(x) x\cdot v +\phi(x)<0, \\ 0 & \textrm{if} & \frac{|v|^2}{2}+b\chi(x) x\cdot v +\phi(x) \geq 0. 
\end{array} \right.$$
Then the following holds.\\
(i) The function $Q^{\ast b,\phi}$ is compactly supported and $${\rm Supp}(Q^{\ast b,\phi})=\left\{\ds \frac{|v|^2}{2}+b\chi(x) x\cdot v +\phi(x)<a_{b,\phi}^{-1}(r_\ast)\right\},$$
where $r_*$ is defined by \fref{rstar}.\\
(ii) We have $Q^{\ast b,\phi}\in {\rm Eq}(Q)$ and
\be  \int_{\R^6} |v|^2 Q^{\ast b,\phi} (x,v)dxdv \leq C( \| \phi \|_{L^{3}}^2+b^2). \label{control_kinetic}\ee
(iii) Let $(f_n)$ be a bounded sequence of  $\mathcal{E}_{j,rad}\backslash \{0\}$ and let $(b_n)$ be a sequence of $\R^+ $ such that $f_n \rightharpoonup f\neq 0$ in $L^p(\R^6)$ and $b_n\rightarrow b$. Then,
$$Q^{\ast b_n,\phi_{f_n}} \rightarrow Q^{\ast b,\phi_f} \ \ \textrm{in} \ \ L^1(\R^6)\cap L^p(\R^6).$$
(iv) For all $f\in {\rm Eq}(Q)$, spherically symmetric, and for all $\nu>0$, we have
\be \int \left(\frac{|v|^2}{2}+b\chi(x) x\cdot v +\nu \phi_{f}(x)\right) \left(Q^{\ast b,\nu \phi_{f}}(x,v)-f(x,v)\right) dxdv \leq 
0   \label{ineg_rearrang}\ee
with equality if, and only if, $f=Q^{\ast b,\nu \phi_{f}}$.
\end{lemma}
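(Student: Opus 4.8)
The plan is to prove the four items in order, since each relies on the previous ones and on Lemma \ref{jacobian}. For item (i), the point is that $Q^{\ast b,\phi}(x,v) > 0$ if and only if $Q^\ast\circ a_{b,\phi}(E) > 0$ where $E := \frac{|v|^2}{2}+b\chi(x)x\cdot v + \phi(x) < 0$; since $Q^\ast$ is nonincreasing, continuous, and vanishes exactly on $[r_\ast,+\infty)$, and since $a_{b,\phi}$ is by Lemma \ref{jacobian}(ii) a strictly increasing $\mathcal C^1$ diffeomorphism from $(e_{b,\phi},0)$ onto $\R_+^*$, this positivity is equivalent to $a_{b,\phi}(E) < r_\ast$, i.e. $E < a_{b,\phi}^{-1}(r_\ast)$. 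Since $r_\ast>0$ lies in the interior of the range of $a_{b,\phi}$, we have $e_{b,\phi} < a_{b,\phi}^{-1}(r_\ast) < 0$, and the control \fref{controlphi} on $\phi$ together with the fact that $\{\phi<e\}$ is bounded shows this sublevel set is bounded in $(x,v)$; hence $Q^{\ast b,\phi}$ is compactly supported. Continuity follows since $Q^\ast$ and $a_{b,\phi}$ are continuous and $E\mapsto Q^\ast\circ a_{b,\phi}(E)$ extends continuously by $0$ across $E=0$ because $Q^\ast(r_\ast)=0$ — wait, one must be careful: $Q^{\ast b,\phi}$ is glued to $0$ at $E=0$, but it already vanishes for $E \geq a_{b,\phi}^{-1}(r_\ast)$ which is strictly negative, so the gluing is across a region where the function is identically zero, hence continuous.

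For item (ii), the equimeasurability $Q^{\ast b,\phi}\in\mathrm{Eq}(Q)$ is the key structural fact: the change of variables $w = v + b\chi(x)x$ (as in the proof of Lemma \ref{jacobian}(i)) reduces the microscopic energy to $\frac{|w|^2}{2} + \phi(x) - \frac{(b\chi(x)|x|)^2}{2}$, whose sublevel sets $\{E<e\}$ have Lebesgue measure exactly $a_{b,\phi}(e)$, and since $w\mapsto v$ is measure-preserving for fixed $x$, the level sets of $Q^{\ast b,\phi}$ have the same measure as those of $Q^\ast$ as a function on $\R^6$ with the "$a_{b,\phi}$-clock", which by construction equals $\mathrm{meas}\{Q>\lambda\}$; the precise bookkeeping is that $\mathrm{meas}\{Q^{\ast b,\phi}>\lambda\} = \mathrm{meas}\{E : Q^\ast\circ a_{b,\phi}(E)>\lambda\}$ pushed through $a_{b,\phi}$ equals $\mathrm{meas}\{r\in\R_+ : Q^\ast(r)>\lambda\} = \mathrm{meas}\{Q>\lambda\}$. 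The kinetic energy bound \fref{control_kinetic} is then obtained by writing $\int |v|^2 Q^{\ast b,\phi}\,dxdv$, using $|v|^2 \leq 2|w|^2 + 2(b\chi(x)|x|)^2$, and estimating $\int |w|^2 Q^\ast\circ a_{b,\phi}\,dxdw$ via the layer-cake / change-of-variables estimate already used for the interpolation inequalities, controlling everything by $\|\phi\|_{L^3}^2 + b^2$ using that the support is contained in a ball whose radius is controlled by $\|\phi\|_{L^3}$ (from \fref{controlphi}) and by $b$.

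For item (iii), the strong $L^1\cap L^p$ convergence, I would argue by dominated convergence combined with Lemma \ref{jacobian}(iii): along the weakly convergent subsequence we have $\phi_{f_n}\to\phi_f$ a.e. (Lemma \ref{propC2}), $a_{b_n,\phi_{f_n}}(e)\to a_{b,\phi_f}(e)$ and $a_{b_n,\phi_{f_n}}^{-1}(s)\to a_{b,\phi_f}^{-1}(s)$, so $Q^{\ast b_n,\phi_{f_n}}(x,v)\to Q^{\ast b,\phi_f}(x,v)$ for a.e. $(x,v)$ using continuity of $Q^\ast$; a uniform domination is provided by $0\le Q^{\ast b_n,\phi_{f_n}}\le \|Q^\ast\|_{L^\infty}$ supported in a fixed ball (the support radius being uniformly bounded thanks to a uniform version of the bound from item (i), using boundedness of $(f_n)$ in $\mathcal E_{j,rad}$ and $b_n\to b$), which gives an $L^1\cap L^p$ dominating function, hence strong convergence. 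Finally, item (iv) is the variational inequality, and this is the main obstacle. The idea is the classical bathtub/Hardy–Littlewood rearrangement inequality in the "$a_{b,\phi}$-coordinate": for $f\in\mathrm{Eq}(Q)$ spherically symmetric and $\phi = \nu\phi_f$, the quantity $\int (\frac{|v|^2}{2}+b\chi x\cdot v + \nu\phi_f)\,g\,dxdv$, viewed as $\int a_{b,\nu\phi_f}^{-1}$-weighted integral against the distribution of $g$, is minimized over all $g$ equimeasurable with $Q$ precisely when $g$ is the nonincreasing rearrangement of $Q$ along the increasing function $E\mapsto a_{b,\nu\phi_f}(E)$, i.e. $g = Q^{\ast b,\nu\phi_f}$; concretely one writes $\int E\,(Q^{\ast b,\nu\phi_f}-f)\,dxdv$ using the layer-cake formula for both $Q^{\ast b,\nu\phi_f}$ and $f$ in terms of their (identical) distribution functions, and applies the Hardy–Littlewood inequality stating that among functions with a given distribution, the one pairing optimally (here: minimally, since $E<0$ on the relevant region and $Q^{\ast b,\nu\phi_f}$ is large where $E$ is small) against the fixed weight $E$ is the monotone rearrangement; strict monotonicity of $a_{b,\nu\phi_f}$ and the injectivity it provides give the equality case $f = Q^{\ast b,\nu\phi_f}$. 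The delicate points here are handling the region where $E\geq 0$ (where $Q^{\ast b,\nu\phi_f}=0$ so the integrand is $-E\,f\le 0$, consistent with the claimed sign) and making the layer-cake manipulation rigorous when $f$ and $Q^{\ast b,\nu\phi_f}$ need not be radial in $v$ after the $w$-substitution — but the substitution $w=v+b\chi(x)x$ is measure-preserving and reduces everything to a genuinely monotone-in-$(|w|,x)$ setting where standard rearrangement theory applies.
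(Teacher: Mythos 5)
Your sketch reproduces the paper's structure for items (i), (iii) and (iv), but item (ii) has a genuine gap in the kinetic energy bound.

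For (i), the argument is the same: $Q^\ast$ vanishes on $[r_\ast,+\infty)$ and $a_{b,\phi}$ is a strictly increasing homeomorphism, so the set where $Q^{\ast b,\phi}>0$ is $\{E<a_{b,\phi}^{-1}(r_\ast)\}$. For (iii), your route is slightly different: the paper passes from a.e.\ convergence of $Q^{\ast b_n,\phi_{f_n}}$ to strong $L^1\cap L^p$ convergence directly by invoking equimeasurability (all the $L^q$ norms are identical, so a.e.\ convergence upgrades automatically), whereas you propose dominated convergence with a uniform compact support. Your route is viable but requires an extra argument establishing the uniform bound on the supports (which does follow from the boundedness of $(f_n)$ in $\calE_{j,rad}$, \fref{controlphi}, and Lemma \ref{jacobian}(iii), but you do not carry it out); the paper's equimeasurability shortcut is cleaner and sidesteps this. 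For (iv), your Hardy--Littlewood/layer-cake description is exactly the paper's argument: split by level sets $S_1(t)=\{\bar f\le t<f\}$, $S_2(t)=\{f\le t<\bar f\}$, use that they have equal measure and that $\bar f$ is nonincreasing in $E$ to compare $\sup_{S_2}E\le\inf_{S_1}E$.

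The gap is the kinetic bound \eqref{control_kinetic} in (ii). You propose $|v|^2\le 2|w|^2+2(b\chi|x|)^2$ and then claim to control $\int|w|^2 Q^{\ast b,\phi}$ by arguing that ``the support is contained in a ball whose radius is controlled by $\|\phi\|_{L^3}$''. This fails on two counts. First, \fref{controlphi} bounds $\phi$ by $\|f\|_{\calE_j}$, not by $\|\phi\|_{L^3}$, and in any case the support radius depends on $a_{b,\phi}^{-1}(r_\ast)$, a threshold that is not controlled by $\|\phi\|_{L^3}$ alone. Second, even a bounded support does not by itself turn $\int|w|^2 Q^{\ast b,\phi}$ into $O(\|\phi\|_{L^3}^2)$. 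The paper's argument is fundamentally different and crucial: it writes $T_b(Q^{\ast b,\phi})=\int E\,Q^{\ast b,\phi}-\int\phi\,Q^{\ast b,\phi}$ and observes that the first term is $\le 0$ because $Q^{\ast b,\phi}$ is supported where $E<0$; hence $T_b(Q^{\ast b,\phi})\le-\int\phi\,\rho_{Q^{\ast b,\phi}}\le\|\phi\|_{L^3}\|\rho_{Q^{\ast b,\phi}}\|_{L^{3/2}}\le C\|\phi\|_{L^3}\||v|^2Q^{\ast b,\phi}\|_{L^1}^{1/2}$ (H\"older plus the usual $L^1$--$L^p$--kinetic interpolation, with $\|Q^{\ast b,\phi}\|_{L^3}=\|Q\|_{L^3}$ by equimeasurability). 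Combined with the lower bound $T_b(Q^{\ast b,\phi})\ge\frac14\||v|^2Q^{\ast b,\phi}\|_{L^1}-b^2R_\chi^2\|Q\|_{L^1}$, this yields a quadratic inequality in $\||v|^2Q^{\ast b,\phi}\|_{L^1}^{1/2}$ which produces \eqref{control_kinetic}. You should replace your support-radius heuristic with this sign observation on $\int E\,Q^{\ast b,\phi}$ and the resulting quadratic inequality.
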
 

\begin{proof} We first remark that property {\em (i)} is a direct consequence of the definition of $Q^{\ast b,\phi}$.\\
{\em Proof of (ii).} Recall that, for all $\lambda>0$,
\be \mu_{Q^{\ast b,\phi}}={\rm meas}\left\{(x,v)\in\R^6, \quad Q^{\ast b,\phi}(x,v)>\lambda\right\}.\ee
If $\lambda\geq Q^\ast(0)=\|Q\|_{L^\infty}$, we clearly have $\mu_{Q^{\ast b,\phi}}(\lambda)=0=\mu_{Q}(\lambda)$. If $\lambda<Q^\ast(0)$, then we have
$$\begin{array}{rcl} 
\mu_{Q^{\ast b,\phi}}(\lambda) & = & {\rm meas}\left\{(x,v)\in\R^6, a_{b,\phi}\left(\frac{|v|^2}{2}+b\chi(x) x\cdot v +\phi(x)\right)<\sup\{r,Q^\ast(r)>\lambda\}\right\} \\ & = & \sup\{r,Q^\ast(r)>\lambda\}=\mu_{Q}(\lambda).
\end{array}$$
Thus the functions $Q^{\ast b,\phi}$ and $Q$ are equimeasurable. To estimate its kinetic energy, we remark that
\be T_b(Q^{\ast b,\phi}) \geq \int \left(\frac{|v|^2}{2}- b R_\chi |v|\right) Q^{\ast b,\phi}  
\geq \frac{1}{4} \int |v|^2Q^{\ast b,\phi}  -b^2R_\chi^2 \|Q\|_{L^1}. \label{control_Tb1}\ee
Moreover, from the definition of $Q^{\ast b,\phi}$, one deduces that
$$\begin{array}{rcl}
 T_b(Q^{\ast b,\phi}) & = &\dis  \int  \left(\frac{|v|^2}{2}+b\chi(x) x\cdot v +\phi(x)\right)Q^{\ast b,\phi} (x,v)dx dv 
 -\int \phi(x)Q^{\ast b,\phi}(x,v) dxdv \\
  & \leq & \dis -\int \phi(x)Q^{\ast b,\phi}(x,v) dxdv \leq \| \phi \|_{L^{3}} \| \rho_{Q^{\ast b,\phi}} \|_{L^{3/2}} \\
  & \leq & \dis C\| \phi \|_{L^3} \| Q^{\ast b,\phi}\|_{L^3}^{1/2} \| |v|^2 Q^{\ast b,\phi} \|_{L^1}^{1/2} \leq C \| \phi \|_{L^{3}} \| |v|^2 Q^{\ast b,\phi} \|_{L^1}^{1/2},
\end{array}$$
where we used an interpolation inequality and $\|Q^{\ast b,\phi}\|_{L^3}=\|Q\|_{L^3}$. Combining this with \eqref{control_Tb1} gives the control of the kinetic energy \eqref{control_kinetic}.

\ms
\ni
{\em Proof of (iii).} From the continuity of $Q^\ast$ and from Lemma \ref{jacobian} {\em (ii)} and {\em (iii)}, we clearly have, for any sequence $e_n\rightarrow e$,
$$Q^{\ast}\circ a_{b_n,\phi_{f_n}}(e_n)\rightarrow Q^{\ast}\circ a_{b,\phi_f}(e).$$
Moreover, by Lemma \ref{propC2}, up to a subsequence, $\phi_{f_n}\rightarrow \phi_f$ almost everywhere in $\R^6$. Denoting 
$$e_n(x,v)=\frac{|v|^2}{2}+b_n\chi(x) x\cdot v + \phi_{f_n}(x),\qquad e(x,v)=\frac{|v|^2}{2}+b\chi(x) x\cdot v + \phi_{f}(x),$$
we deduce that 
$$\textrm{for a.e.} \quad (x,v)\in\R^6,\ e_n(x,v) \rightarrow  e(x,v).$$
Thus $Q^{\ast b_n,\phi_n}$ converges to $Q^{\ast b,\phi}$ almost everywhere in $\R^6$ and the equimeasurability of $Q^{\ast b_n,\phi_n}$ and $Q^{\ast b,\phi}$ gives the convergence in $L^1\cap L^p$.

\ms
\ni
{\em Proof of (iv).} Let $f\in {\rm Eq}(Q)$ be spherically symmetric and let $\nu>0$. We have $\phi:=\nu \phi_f=\phi_{\nu f} \in \Phi_{rad}\backslash\{0\}$. We denote $\bar{f}=Q^{\ast b, \phi}$ and we use the layer cake representation
$$f(x,v)=\int_{t=0}^{\|f\|_\infty} \mathbf{1}_{t<f(x,v)} dt.$$
Then from Fubini's theorem,\\
$\dis \int_{\R^6} \left(\frac{|v|^2}{2}+b\chi(x) x\cdot v +\phi(x)\right) \left(f(x,v)-\bar{f}(x,v)\right)dxdv$
$$ \begin{array}{cl}
  = & \dis \int_{t=0}^{\|f\|_\infty} dt \int_{\R^6} \left(\mathbf{1}_{t<f(x,v)} - \mathbf{1}_{t<\bar{f}(x,v)} \right) \left(\frac{|v|^2}{2}+b\chi(x) x\cdot v +\phi \right) dx dv \\ 
  = & \dis \int_{t=0}^{\|f\|_\infty} dt  \int_{\R^6} \left(\mathbf{1}_{\bar{f}(x,v) \leq t<f(x,v)} - \mathbf{1}_{f(x,v)\leq t<\bar{f}(x,v)} \right) \left(\frac{|v|^2}{2}+b\chi(x) x\cdot v +\phi \right) dx dv \\
  = & \dis \int_{t=0}^{\|f\|_\infty} dt \left( \int_{S_1(t)} \left(\frac{|v|^2}{2}+b\chi(x) x\cdot v +\phi \right) dx dv - \int_{S_2(t)} \left(\frac{|v|^2}{2}+b\chi(x) x\cdot v +\phi \right) dx dv \right) ,
  \end{array} $$
  with $$ S_1(t)=\{\bar{f}(x,v) \leq t<f(x,v)\}, \ \ \ S_2(t)=\{f(x,v)\leq t<\bar{f}(x,v)\}.$$
Now, from the equimeasurability of $f$ and $\bar{f}$, we have 
$$ \forall t>0,\qquad {\rm meas}(S_1(t))={\rm meas}(S_2(t))$$
and, since $\bar{f}$ is a nonincreasing function of $\frac{|v|^2}{2}+b\chi(x) x\cdot v +\phi(x)$, 
\bee a_2(t)&=&\sup_{(x,v)\in S_2(t)} \left\{\frac{|v|^2}{2}+b\chi(x) x\cdot v +\phi(x)\right\}\\
&\leq& \inf_{(x,v)\in S_1(t)} \left\{\frac{|v|^2}{2}+b\chi(x) x\cdot v +\phi(x)\right\}=a_1(t).
\eee
Thus 
$$\begin{array} {rcl}
\dis \int_{S_2(t)} \left(\frac{|v|^2}{2}+b\chi(x) x\cdot v +\phi(x) \right) dx dv & \leq & {\rm meas}(S_2(t)) a_2(t) \leq {\rm meas}(S_1(t)) a_1(t) \\ & \leq &\dis  \int_{S_1(t)} \left(\frac{|v|^2}{2}+b\chi(x) x\cdot v +\phi(x) \right) dx dv,
\end{array}$$
which yields \fref{ineg_rearrang}. In the case of equality in this above chain of inequalities, it is easy to prove that $f=\bar{f}$, see for instance \cite{LM3,LMR-inv}.
\end{proof}

\subsection{Existence of self-similar solutions}\label{subsectselfsim} The goal of this subsection is to prove Proposition \ref{lemmasimilar}. 

\ms
\ni
{\em Step 1: uniform bounds.} Let $0<b\leq 1$ be given. In this step, we prove that $T_b$ is finite and that there exists $C^*>0$, independent of $b$, such that every minimizing sequence $(f_n^b)_{n\in \NN}$ of \fref{minTb} satisfies, for $n$ large enough,
\be
\label{estiC*}
\|f_n^b\|_{\calE_j}\leq C^*.
\ee
For all $(x,v)\in\R^6$, we have 
$$\frac{|v|^2}{2}+b\chi(x) x\cdot v \geq \frac{|v|^2}{2}-bR_\chi |v| \geq \frac{|v|^2}{4}-b^2R_\chi^2$$ and so, for all $f\in {\rm Eq}(Q)$, we have
\be T_b(f) \geq \frac{1}{4}\int |v|^2 f- b^2R_\chi^2\|Q\|_{L^1}. \label{controlTb}\ee
This shows that $T_b>-\infty$. Moreover, if $(f_n^b)_{n\in \NN}$ is a minimizing sequence of the variational problem \fref{minTb}, then for $n$ large enough we have
$$ T_b(f_n^b)\leq 1+T_b\leq 1+T_b(Q)\leq 1+C\int |v|^2 Q.$$
This, combined with \fref{controlTb} yields the existence of $C^*$.

\bs
\ni
{\em  Step 2.} For all $b\in \RR_+$, let $(f_n^b)_{n\in \NN}$ be a minimizing sequence for \fref{minTb}. In this step, we show that there exists a sequence $(\nu_n^b)$ of positive numbers such that $(Q^{\ast b,\nu_n^b \phi_{f_n^b}})$, defined by Lemma \ref{rearrangement}, is also a minimizing sequence of \eqref{minTb}. The interest of this new minimizing sequence is its compactness property, as it will be proved in the third step.
\begin{lemma}\label{lemma_epot} There exists $b^\ast>0$ such that the following holds true. For all $b\in[0,b^\ast]$ and for all minimizing sequences $(f_n^b)_{n\in \mathbb{N}}$ of the variational problem \eqref{minTb}, there exist $0<\nu^-<\nu^+$ and a sequence of positive numbers $(\nu_n^b)_{n\in\mathbb{N}}$ in $[\nu^-,\nu^+]$ such that, up to a subsequence, we have $E_{pot}(Q^{\ast b,\nu_n^b \phi_{f_n^b}})=E_{pot}(Q)$.  Moreover, we have $T_b(Q^{\ast b,\nu_n^b \phi_{f_n^b}})\leq T_b(f_n^b)$ with equality if and only if $Q^{\ast b,\nu_n^b \phi_{f_n^b}}=f_n^b$.
\end{lemma}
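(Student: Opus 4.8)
The plan is to produce the numbers $\nu_n^b$ by an intermediate value argument applied to the map $\nu\mapsto E_{pot}\big(Q^{\ast b,\nu\phi_{f_n^b}}\big)$, and then to deduce the energy inequality from the rearrangement inequality of Lemma \ref{rearrangement}(iv) together with a Cauchy--Schwarz argument for the Manev interaction. Throughout, write $g_{n,\nu}:=Q^{\ast b,\nu\phi_{f_n^b}}$; since $\nu\phi_{f_n^b}=\phi_{\nu f_n^b}\in\Phi_{rad}\backslash\{0\}$ for $\nu>0$, this is well defined, spherically symmetric, and equimeasurable with $Q$ by Lemma \ref{rearrangement}(ii), and $E_{pot}=E_{pot}^M$ here since $\delta=0$. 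For the ``moreover'' part, suppose $\nu_n^b$ has been chosen so that $E_{pot}(g_{n,\nu_n^b})=E_{pot}(Q)$. Applying Lemma \ref{rearrangement}(iv) with $f=f_n^b$ (which is admissible for \eqref{minTb}, hence in $\mathrm{Eq}(Q)$, spherically symmetric, with $E_{pot}(f_n^b)=E_{pot}(Q)$) and $\nu=\nu_n^b$, and writing $\mathcal B(\rho,\sigma)=\int\!\!\int\frac{\rho(x)\sigma(y)}{2\pi^2|x-y|^2}\,dx\,dy$ for the (positive definite) Manev bilinear form, the inequality there rearranges into $T_b(g_{n,\nu_n^b})-T_b(f_n^b)\le \nu_n^b\big(\mathcal B(\rho_{f_n^b},\rho_{g_{n,\nu_n^b}})-E_{pot}^M(f_n^b)\big)$; by Cauchy--Schwarz $\mathcal B(\rho_{f_n^b},\rho_{g_{n,\nu_n^b}})\le E_{pot}^M(f_n^b)^{1/2}E_{pot}^M(g_{n,\nu_n^b})^{1/2}=E_{pot}(Q)=E_{pot}^M(f_n^b)$, so the right-hand side is $\le 0$. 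Equality forces equality in Lemma \ref{rearrangement}(iv), i.e.\ $f_n^b=g_{n,\nu_n^b}$.

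To construct $\nu_n^b$, I would first check that $\nu\mapsto E_{pot}(g_{n,\nu})$ is continuous on $(0,+\infty)$: on any compact subset of $(0,+\infty)$ the kinetic energy of $g_{n,\nu}$ is bounded (Lemma \ref{rearrangement}(ii)) and the $L^1,L^p$ norms are frozen, so $g_{n,\nu}$ stays bounded in $\calE_j$; Lemma \ref{rearrangement}(iii), applied along $h_k=\nu_k f_n^b$ with $\nu_k\to\nu_0>0$, then gives convergence of $g_{n,\nu}$ in $L^1\cap L^p$, hence of $\rho_{g_{n,\nu}}$ in $L^{6/5}$ by interpolation, hence of $E_{pot}^M(g_{n,\nu})$ by Hardy--Littlewood--Sobolev. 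Next, from Lemma \ref{rearrangement}(ii), the interpolation inequality \eqref{ineg-inter-M} (with $\|g_{n,\nu}\|_{L^1}=\|Q\|_{L^1}$ and $\|g_{n,\nu}\|_{L^p}=\|Q\|_{L^p}$), \eqref{L3}, and the uniform bound $\|f_n^b\|_{\calE_j}\le C^*$ of Step 1 (with $C^*$ independent of $b$), one gets $E_{pot}(g_{n,\nu})\le C\,\| |v|^2 g_{n,\nu}\|_{L^1}\le C\big(\nu^2\|\phi_{f_n^b}\|_{L^3}^2+b^2\big)\le C\big(\nu^2(C^*)^2+b^2\big)$. Choosing $\nu^->0$ and $b^\ast>0$ so small that $C\big((\nu^-)^2(C^*)^2+(b^\ast)^2\big)<E_{pot}(Q)$ yields $E_{pot}(g_{n,\nu})<E_{pot}(Q)$ for all $0<\nu\le\nu^-$, all $n$ and all $b\le b^\ast$; in particular any $\nu$ with $E_{pot}(g_{n,\nu})=E_{pot}(Q)$ must exceed $\nu^-$. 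Since (see below) $E_{pot}(g_{n,\nu})\to+\infty$ as $\nu\to+\infty$, the intermediate value theorem gives, for each $n$, some $\nu_n^b>\nu^-$ with $E_{pot}(g_{n,\nu_n^b})=E_{pot}(Q)$; then $g_{n,\nu_n^b}$ is admissible for \eqref{minTb}, and by the first paragraph $T_b(g_{n,\nu_n^b})\le T_b(f_n^b)$, so $(g_{n,\nu_n^b})$ is again a minimizing sequence and, by the lower bound \eqref{controlTb}, its kinetic energy is bounded uniformly in $n$.

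The remaining and, I expect, hardest point is the uniform-in-$n$ upper bound $\nu_n^b\le\nu^+$, which rests on showing $E_{pot}(g_{n,\nu})\to+\infty$ as $\nu\to+\infty$ \emph{uniformly in $n$}. As $\nu$ grows, the modified microscopic energy $\frac{|v|^2}{2}+b\chi(x)x\cdot v+\nu\phi_{f_n^b}(x)$ develops a deeper and deeper well located where $\phi_{f_n^b}$ is close to its essential infimum, and the rearranged density $\rho_{g_{n,\nu}}$ concentrates there; quantifying this with the decay estimate $\phi_{f_n^b}(r)\ge -CC^*r^{-3/2}$ from \eqref{controlphi} (uniform in $n$ through $C^*$) together with the elementary lower bound $E_{pot}^M(\rho)\ge (\text{mass})^2/\big(8\pi^2(\mathrm{diam}\,\mathrm{supp}\,\rho)^2\big)$ should give the claimed uniform blow-up. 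A clean way to package it is by contradiction: if $\nu_n^b\to+\infty$ along a subsequence, then $g_{n,\nu_n^b}$ is bounded in $\calE_j$ (previous paragraph) and concentrates, so $E_{pot}(g_{n,\nu_n^b})\to+\infty$, contradicting $E_{pot}(g_{n,\nu_n^b})=E_{pot}(Q)$; hence $\nu_n^b\le\nu^+$ with $\nu^+$ independent of $n$. Together with $\nu_n^b>\nu^-$ this completes the proof, the ``up to a subsequence'' absorbing the weak compactness extractions used along the way.
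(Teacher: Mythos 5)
Your construction of the lower threshold $\nu^-$, the continuity argument, and the ``moreover'' inequality $T_b(Q^{\ast b,\nu_n^b\phi_{f_n^b}})\le T_b(f_n^b)$ all match the paper's proof. For the ``moreover'' part your Cauchy--Schwarz repackaging is equivalent to what the paper does: the paper writes $-\int\phi_{f_n^b}\,\rho_{Q^{\ast b,\nu_n^b\phi_{f_n^b}}}=\tfrac12\big(E_{pot}(f_n^b)+E_{pot}(Q^{\ast b,\nu_n^b\phi_{f_n^b}})-E_{pot}(Q^{\ast b,\nu_n^b\phi_{f_n^b}}-f_n^b)\big)$ and throws away the nonnegative cross term, which is exactly the ``completed square'' form of your Cauchy--Schwarz estimate for the Manev bilinear form. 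Both give the same inequality and the same equality criterion via Lemma~\ref{rearrangement}(iv).

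The genuine gap is in your construction of the \emph{upper} threshold $\nu^+$. You assert (and rest the whole remainder on) the claim that $E_{pot}(Q^{\ast b,\nu\phi_{f_n^b}})\to+\infty$ as $\nu\to+\infty$, \emph{uniformly in $n$}. This is not established, and the sketch you give does not work. First, the set where $\rho_{Q^{\ast b,\nu\phi_{f_n^b}}}$ concentrates as $\nu\to+\infty$ is the near-minimum sublevel set $\{x:\phi_{f_n^b}(x)<-\|\phi_{f_n^b}\|_{L^\infty}+\eps_\nu\}$ with $\eps_\nu\to0$. For a merely radially symmetric (not monotone) potential this set need not shrink to a point: it can converge to a sphere $\{|x|=r_0\}$ with $r_0>0$, in which case the diameter of $\mathrm{supp}\,\rho_{Q^{\ast b,\nu\phi_{f_n^b}}}$ stays bounded away from zero and the elementary bound $E_{pot}^M(\rho)\gtrsim(\mathrm{mass})^2/(\mathrm{diam})^2$ gives nothing. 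The decay estimate \eqref{controlphi} bounds the radius where the minimum can lie, but says nothing about the size of the minimum set. Second, your ``clean contradiction'' packaging is self-defeating: if $Q^{\ast b,\nu_n^b\phi_{f_n^b}}$ is bounded in $\calE_j$ (which you correctly derive from \eqref{controlTb} and the minimizing-sequence property), then interpolation \eqref{ineg-inter-M} together with the fixed $L^1$ and $L^p$ norms bounds $E_{pot}(Q^{\ast b,\nu_n^b\phi_{f_n^b}})$ uniformly, so the ``concentration gives blow-up'' conclusion is inconsistent with the hypotheses you have already granted. The paper does not claim any such blow-up; it only needs $\limsup_{\nu\to+\infty}E_{pot}(Q^{\ast b,\nu\phi_{f}})>E_{pot}(Q)$ for an appropriate single limit function $f$, and even this requires real work. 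The paper's route is: extract the weak $L^p$ limit $f_b$ of the minimizing sequence (using Lemma~\ref{propC2}), pass the potential energies to the limit in $n$ at each fixed $\nu$ via Lemma~\ref{rearrangement}(iii), reduce to showing $E_{pot}(Q^{\ast b,\nu^+\phi_{f_b}})>E_{pot}(Q)$ for some $\nu^+$, and prove this in Lemma~\ref{trouve-bast} by contradiction, using the Cauchy--Schwarz bound, the concentration estimate \eqref{strictenfait} giving $\limsup_\nu(-\int\phi_{f}Q^{\ast b,\nu\phi_{f}})\geq\|\phi_f\|_{L^\infty}\|Q\|_{L^1}$, and crucially the compactness/orbital-stability machinery of Proposition~\ref{propstab}, which shows that equality throughout would force $f$ to be a minimizer of \eqref{def-K} with $\phi_f$ constant on $\mathrm{supp}\,\rho_f$ --- impossible by the structure Theorem~\ref{theo1}. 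None of this is replaceable by the direct blow-up heuristic.
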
 

\ni
{\em Proof of Lemma \pref{lemma_epot}.} Let $b>0$ be given and consider a minimizing sequence $(f_n^b)_{n\in \mathbb{N}}$ of the variational problem \eqref{minTb}. From Step 1, we know that (for $n$ large enough), this sequence satisfies the bound \fref{estiC*}. 

We first observe that $\nu\mapsto E_{pot}(Q^{\ast b,\nu \phi_{f_n^b}})$ is continuous on $\R_+^\ast$. Indeed, by Lemma \ref{rearrangement} {\em (iii)}, we know that $\nu\mapsto  Q^{\ast b,\nu \phi_{f_n^b}}$ is continuous from $\RR_+^\ast$ to $L^1(\R^6)\cap L^p(\R^6)$.
Hence, from the kinetic control \fref{control_kinetic} and Lemma \ref{propC2} of Appendix B, one deduces the continuity of $\nu\mapsto E_{pot}(Q^{\ast b,\nu \phi_{f_n^b}})$.

We claim now that, for $b$ small enough, there exist $0<\nu^-<\nu^+$ such that, up to a subsequence with respect of $n$, we have
\be
\label{claimm}E_{pot}(Q^{\ast b,\nu^- \phi_{f_n^b}})<E_{pot}(Q)<E_{pot}(Q^{\ast b,\nu^+ \phi_{f_n^b}}).
\ee
Since $Q^{\ast b,\nu \phi_{f_n^b}}\in {\rm Eq}(Q)$, we have, by \fref{ineg-inter-M},
\be 0<E_{pot}(Q^{\ast b,\nu \phi_{f_n^b}})\leq C\int_{\R^6} |v|^2 Q^{\ast b,\nu \phi_{f_n^b}}(x,v)dxdv. \label{boundbykinetic}\ee
Furthermore, the control of the kinetic energy \eqref{control_kinetic}, together with \fref{L3}, gives 
$$  \int |v|^2 Q^{\ast b,\nu \phi_{f_n^b}} \leq C( \nu^2 \|f_n^b\|_{\calE_j}^2+b^2)\leq C( (C^*\nu)^2+b^2),$$
where we also used \fref{estiC*}. Hence from \fref{boundbykinetic}, one deduces that there exists $b^\ast_1\in (0,1]$ and $\nu^->0$ such that for all $b\in [0,b^\ast_1]$ and for all $n$, we have
$$E_{pot}(Q^{\ast b,\nu^- \phi_{f_n^b}}) <E_{pot}(Q).$$
Note that $b_1^*$ depends only on $Q$, and does not depend on the sequence $(f_n^b)$.

Let us now prove the second part of the claim \fref{claimm}. Since the sequence $(f_n^b)_{n\in\NN}$ is bounded in $\calE_j$, Lemma \ref{propC2} of Appendix B implies that there exists $f_b\in \mathcal{E}_{j,rad}$ such that up to a subsequence, as $n\rightarrow +\infty$,
$$f_n^b  \rightharpoonup f_b \  \textrm{in} \ L^p(\R^6), \quad  E_{pot}(f_b)=\lim_{n\rightarrow +\infty} E_{pot}(f_n^b)=E_{pot}(Q).$$
In particular, $f_b\neq 0$ and, by Lemma \ref{rearrangement} {\em (iii)}, for all $\nu>0$ we have
$$Q^{\ast b,\nu \phi_{f_n^b}} \rightarrow Q^{\ast b,\nu \phi_{f_b}} \ \ \textrm{in}\  L^1(\RR^6)\cap L^p(\RR^6) \, \mbox{ as } n\rightarrow +\infty.$$
Thus, from the kinetic control \eqref{control_kinetic}, $(Q^{\ast b,\nu \phi_{f_n^b}} )$ is bounded in $\mathcal{E}_j$ and spherically symmetric, which implies that $E_{pot}(Q^{\ast b,\nu \phi_{f_n^b}} )$ converges to $E_{pot}(Q^{\ast b,\nu \phi_{f_b}} )$ as $n\rightarrow +\infty$. Consequently, to prove the claim \fref{claimm}, it is sufficient to show that, if $b$ is small enough, there exist $\nu^+$ such that $E_{pot}(Q^{\ast b,\nu^+ \phi_{f_b}} )>E_{pot}(Q)$. This result will a consequence of the following lemma, which is proved later.
\begin{lemma}\label{trouve-bast} There exists $b^\ast_2>0$ such that the following holds true. For all $f\in \mathcal{E}_{j,rad}$  satisfying
\be E_{pot}(f)=E_{pot}(Q), \quad \| f\|_{L^1}\leq \| Q\|_{L^1},\quad   \|j( f )\|_{L^1}\leq \| j(Q)\|_{L^1}\label{hyp1-bast} \ee
and
\be T_b(f ) +\frac{(b R_\chi)^2}{2}\| f \|_{L^1} \leq T_b + \frac{(b R_\chi)^2}{2} \| Q\|_{L^1}, \label{hyp2-bast}\ee
for some $b\in [0,b^\ast_2]$, we have
\be  \limsup_{\nu\rightarrow +\infty} E_{pot}(Q^{\ast b,\nu \phi_{f}} ) > E_{pot}(Q). \ee
\end{lemma}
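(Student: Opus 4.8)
The plan is to argue by contradiction. If no $b^\ast_2$ works, there are $b_k\downarrow 0$ and functions $f_k\in\mathcal{E}_{j,rad}$ satisfying \eqref{hyp1-bast}--\eqref{hyp2-bast} with $b=b_k$ but with $\limsup_{\nu\to+\infty}E_{pot}(Q^{\ast b_k,\nu\phi_{f_k}})\le E_{pot}(Q)$. First I would check that $(f_k)$ is bounded in $\mathcal{E}_{j,rad}$: $\|f_k\|_{L^1}$ and $\|j(f_k)\|_{L^1}$ are bounded by \eqref{hyp1-bast}, and $\||v|^2f_k\|_{L^1}$ is bounded by combining \eqref{controlTb} with \eqref{hyp2-bast} and the trivial upper bound $T_{b_k}\le T_{b_k}(Q)=\tfrac12\||v|^2Q\|_{L^1}$ (the cross term in $T_{b_k}(Q)$ vanishes since the steady state $Q$ is even in $v$). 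Up to a subsequence, Lemma \ref{propC2} then gives $f_k\rightharpoonup f_\ast$ in $L^p(\R^6)$ with $E_{pot}(f_k)\to E_{pot}(f_\ast)$; since $E_{pot}(f_k)=E_{pot}(Q)>0$, $f_\ast\neq0$ and $f_\ast$ is radial.

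The heart of the argument is to identify $f_\ast$. I would first show $T_{b_k}\to\tfrac12E_{pot}(Q)$: the upper bound is as above, and the matching lower bound follows because every competitor $f$ in \eqref{minTb} satisfies $\||v|^2f\|_{L^1}\ge E_{pot}(f)=E_{pot}(Q)$ (since $K(f)\ge J(M_1,M_j)=1$), hence $T_{b_k}(f)\ge\tfrac12\||v|^2f\|_{L^1}-b_kR_\chi\|f\|_{L^1}^{1/2}\||v|^2f\|_{L^1}^{1/2}\ge\tfrac12E_{pot}(Q)-O(b_k)$. Passing to the limit in \eqref{hyp2-bast} by weak lower semicontinuity of the nonnegative kinetic‑type functional $\int\bigl(\tfrac{|v|^2}{2}+b\chi x\cdot v+\tfrac{(bR_\chi)^2}{2}\bigr)f$, one gets $\||v|^2f_\ast\|_{L^1}\le E_{pot}(Q)=E_{pot}(f_\ast)$, i.e. $K(f_\ast)\le1$; since $K(f_\ast)\ge J(\|f_\ast\|_{L^1},\|j(f_\ast)\|_{L^1})\ge J(\|Q\|_{L^1},\|j(Q)\|_{L^1})=1$ by the monotonicity of $J$ (Lemma \ref{lemmonotone-K}), all inequalities are equalities, and strict monotonicity of $J$ forces $\|f_\ast\|_{L^1}=\|Q\|_{L^1}$, $\|j(f_\ast)\|_{L^1}=\|j(Q)\|_{L^1}$, $K(f_\ast)=1$. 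Thus $f_\ast$ is a minimizer of \eqref{def-K} with $J=1$ — hence a steady state — and running the concentration–compactness argument already used for Proposition \ref{propstab} (vanishing is excluded by $E_{pot}(f_\ast)>0$, dichotomy by Lemma \ref{lemmonotone-K}) yields $f_k\to f_\ast$ strongly in $\mathcal{E}_j$. By Theorem \ref{theo1}(iii), $\phi_{f_\ast}$ is radially symmetric, nondecreasing, of class $\mathcal{C}^{1,\alpha}$ (in particular bounded), strictly increasing near the origin, with $\inf\phi_{f_\ast}=\phi_{f_\ast}(0)<0$. Finally, from $\rho_{f_k}\to\rho_{f_\ast}$ in $L^1\cap L^{p_0}$ ($p_0=\tfrac{5p-3}{3p-1}>\tfrac32$) and the radial formula for the Manev kernel, $\phi_{f_k}\to\phi_{f_\ast}$ uniformly on every $\{|x|\ge r_0\}$, while \eqref{controlphi} gives the uniform lower bound $\phi_{f_k}(x)\ge-C|x|^{-3/2}$.

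For each $k$, the assumption lets me pick $\nu_k\to+\infty$ with $E_{pot}(g_k)\le E_{pot}(Q)+1$, where $g_k:=Q^{\ast b_k,\nu_k\phi_{f_k}}$. By Lemma \ref{rearrangement}(i), ${\rm Supp}\,\rho_{g_k}\subset\Omega_k=\{x:\ \nu_k\phi_{f_k}(x)-\tfrac12(b_k\chi(x)|x|)^2<e_{\ast,k}\}$ with $e_{\ast,k}=a_{b_k,\nu_k\phi_{f_k}}^{-1}(r_\ast)$; rescaling the explicit formula \eqref{expression_a} in the identity $a_{b_k,\nu_k\phi_{f_k}}(e_{\ast,k})=r_\ast$ gives
\[
\int_0^{+\infty}\left(c_k+\frac{(b_k\chi(r)r)^2}{2\nu_k}-\phi_{f_k}(r)\right)^{3/2}_{+}r^2\,dr=\frac{3\,r_\ast}{32\pi^2\sqrt2\,\nu_k^{3/2}}\ \longrightarrow\ 0,\qquad c_k:=\frac{e_{\ast,k}}{\nu_k}.
\]
I then distinguish two cases. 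If $c_k\to-\infty$ (along a subsequence), then $\Omega_k\subset\{\phi_{f_k}<c_k+o(1)\}\subset B\bigl(0,(C/|c_k|)^{2/3}\bigr)$ by the uniform lower bound on $\phi_{f_k}$, so ${\rm diam}\,\Omega_k\to0$. If $(c_k)$ stays bounded, $c_k\to c_\infty\in(-\infty,0)$; the integrand above is then supported in a fixed ball and dominated there by $(C'+Cr^{-3/2})^{3/2}r^2\in L^1_{\rm loc}$, so dominated convergence together with $\phi_{f_k}\to\phi_{f_\ast}$ a.e. forces $\int_0^{+\infty}(c_\infty-\phi_{f_\ast})^{3/2}_+r^2\,dr=0$, hence $c_\infty\le\phi_{f_\ast}(0)$; since $\phi_{f_\ast}(r)>\phi_{f_\ast}(0)\ge c_\infty$ for $r>0$ and $\phi_{f_k}\to\phi_{f_\ast}$ uniformly on $\{|x|\ge r_0\}$, for $k$ large $\phi_{f_k}(x)>c_k+o(1)$ there, whence $\Omega_k\subset B(0,r_0)$; as $r_0>0$ is arbitrary, ${\rm diam}\,\Omega_k\to0$ again. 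In both cases ${\rm diam}({\rm Supp}\,\rho_{g_k})\to0$, so, using $\|\rho_{g_k}\|_{L^1}=\|Q\|_{L^1}$ (equimeasurability),
\[
E_{pot}(g_k)=E_{pot}^M(g_k)=\frac1{2\pi^2}\int_{\R^3}\int_{\R^3}\frac{\rho_{g_k}(x)\rho_{g_k}(y)}{|x-y|^2}\,dx\,dy\ \ge\ \frac{\|Q\|_{L^1}^2}{2\pi^2\,{\rm diam}({\rm Supp}\,\rho_{g_k})^2}\ \longrightarrow\ +\infty,
\]
contradicting $E_{pot}(g_k)\le E_{pot}(Q)+1$. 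This proves the lemma.

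The step I expect to be the main obstacle is the identification of the weak limit $f_\ast$ as a genuine ground state: one must know that $\phi_{f_\ast}$ is radially increasing — so that its infimum is attained only at the origin — because otherwise the rescaled rearrangements $\rho_{g_k}$ could concentrate on a sphere of positive radius and $E_{pot}^M(g_k)$ would remain bounded. This is exactly where the hypotheses \eqref{hyp1-bast}--\eqref{hyp2-bast} and the smallness of $b$ enter, through the sharp value $T_0=\tfrac12E_{pot}(Q)$ and the strict monotonicity of $J$. A secondary technical point is the uniform control of $\phi_{f_k}$ away from the origin needed to localize $\Omega_k$ in the bounded case; it rests on the radial symmetry, which forbids deep shell‑concentrations of $\rho_{f_k}$ compatible with the $L^1\cap L^{p_0}$ bound.
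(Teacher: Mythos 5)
Your overall plan (contradiction, extract a weak/strong limit $f_\ast$, identify it as a minimizer with the same mass, Casimir and kinetic energy as $Q$ via monotonicity of $J$, then derive a contradiction) agrees with the paper up to and including the identification of $f_\ast$, and that part of the proposal is sound.  The final contradiction step, however, is genuinely different from the paper's and, as written, contains a gap.

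The paper closes the argument through potential--energy estimates: a Cauchy--Schwarz bound gives $\limsup_\nu(-\int\phi_{f_k}Q^{\ast b_k,\nu\phi_{f_k}})\le E_{pot}(Q)$, while a support analysis of the rearrangement gives $\limsup_\nu(-\int\phi_{f_k}Q^{\ast b_k,\nu\phi_{f_k}})\ge\|\phi_{f_k}\|_{L^\infty}\|Q\|_{L^1}\ge E_{pot}(f_k)$.  Forcing equalities, passing to the limit $k\to+\infty$, and using only that a ground state potential \emph{cannot be constant on} $\mathrm{Supp}(\rho_f)$ (which follows trivially from continuity of $\rho_f$ and \eqref{expression-rho}) gives the contradiction.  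Your route instead tries to show that the diameter of $\mathrm{Supp}(\rho_{Q^{\ast b_k,\nu_k\phi_{f_k}}})$ tends to $0$, hence $E_{pot}^M$ blows up.  That idea is attractive, but in the ``bounded $c_k$'' case your conclusion $\mathrm{diam}\,\Omega_k\to 0$ hinges on the assertion ``$\phi_{f_\ast}(r)>\phi_{f_\ast}(0)$ for all $r>0$'', which you attribute to Theorem \ref{theo1}(iii).  Theorem \ref{theo1}(iii) and its proof (Step 4 of Section \ref{subsect2}) only establish that $\phi_Q$ is \emph{nondecreasing}; they do not rule out a flat plateau $\phi_{f_\ast}\equiv\phi_{f_\ast}(0)$ on some $[0,r_1]$ with $r_1>0$.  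If such a plateau existed, then $\int(c_\infty-\phi_{f_\ast})_+^{3/2}r^2\,dr=0$ would still hold with $c_\infty=\phi_{f_\ast}(0)$, $\Omega_k$ would stabilize near $B(0,r_1)$ rather than shrink, and $E_{pot}^M(g_k)$ would remain bounded, so no contradiction would follow.  To make your argument work you would need a genuine extra ingredient --- e.g.\ a strong-maximum-principle-type fact for $(-\Delta)^{1/2}$ (if $(-\Delta)^{1/2}\phi_{f_\ast}=-\rho_{f_\ast}$ and $\phi_{f_\ast}$ is constant on a ball $B(0,r_1)$, then on that ball $(-\Delta)^{1/2}\phi_{f_\ast}(x)=\int_{|y|>r_1}\frac{\phi_{f_\ast}(0)-\phi_{f_\ast}(y)}{|x-y|^4}\,dy$ is strictly superharmonic in $x$, hence cannot equal the constant $-\rho_{f_\ast}(0)$), which is not in the paper and is not supplied in the proposal.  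The paper's closing argument deliberately sidesteps this by using only ``non-constancy of $\phi_f$ on $\mathrm{Supp}(\rho_f)$'', a strictly weaker and elementary property.

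A secondary, smaller issue: the claimed uniform convergence ``$\phi_{f_k}\to\phi_{f_\ast}$ uniformly on $\{|x|\ge r_0\}$'' is not established in the paper (Lemma \ref{propC2} gives $L^3$ convergence of $\phi^M_{f_n}$); it is plausible from $\rho_{f_k}\to\rho_{f_\ast}$ in $L^1\cap L^{p_0}$ and the explicit radial kernel, but should be proved rather than asserted.  The paper avoids needing this too.

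In short: same skeleton through the identification of $f_\ast$, but a different and incomplete endgame.  The missing step is the strict increase of $\phi_{f_\ast}$ at the origin, which is not a consequence of Theorem \ref{theo1}(iii).
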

Before proving Lemma \ref{trouve-bast}, let us use it to end the proof of Lemma \ref{lemma_epot} and Proposition \ref{lemmasimilar}. Let us check that $f_b$ satisfies the assumptions of Lemma \ref{trouve-bast}, for $b\leq b^*_2$. Note that $b^\ast_2$ given by this lemma is independent of the function $f_b$. 
First, $f_b$ satisfies Assumption \eqref{hyp1-bast} because of the weak convergence of $(f_n^b)$ to $(f_b)$ and of the strong convergence of the potential energies. To prove that $f_b$ satisfies Assumption \eqref{hyp2-bast}, we remark that
$$ \forall (x,v)\in \RR^6,\ \frac{|v|^2}{2} + b \chi(x) x\cdot v + \frac{(b R_\chi)^2}{2}= \frac{|v+b\chi(x)x |^2}{2}+\frac{(b R_\chi)^2-(b\chi(x)|x|)^2}{2}\geq 0.$$
Hence, by lower semicontinuity, one has
$$T_b(f_b)+\frac{(b R_\chi)^2}{2}\|f_b\|_{L^1}\leq \liminf_{n\to +\infty}\left(T_b(f^b_n)+\frac{(b R_\chi)^2}{2}\|f^b_n\|_{L^1}\right)=T_b+\frac{(b R_\chi)^2}{2}\|Q\|_{L^1}.$$
Therefore, we may apply Lemma \ref{trouve-bast} and get the existence of $\nu^+$ such that, for $b\leq b_2^\ast$, 
$$E_{pot}(Q^{\ast b,\nu^+ \phi_{f_b}} ) > E_{pot}(Q).$$
Hence the claim \fref{claimm} holds true for all $0\leq b\leq b^*=\min(b^*_1,b^*_2)$. Note that $b^*$ is independent of the sequence $(f_n^b)$. One can chose $\nu^b_n\in [\nu^-,\nu^+]$ such that for all $n$, we have  $E_{pot}(Q^{\ast b,\nu^b_n \phi_{f_n^b}})=E_{pot}(Q)$. 

\bs
Now, it remains to show the second part of Lemma \ref{lemma_epot}. We have
$$\begin{array} {rcl}
T_b(Q^{\ast b,\nu^b_n \phi_{f^b_n}}) &= & \dis \int \left(\frac{|v|^2}{2}+b\chi(x) x\cdot v +\nu^b_n \phi_{f^b_n}(x)\right) Q^{\ast b,\nu^b_n \phi_{f^b_n}}(x,v)dxdv \\ & & \dis -\nu^b_n \int \phi_{f^b_n}(x)Q^{\ast b,\nu^b_n \phi_{f^b_n}}(x,v)dxdv \\
 & \leq &\dis T_b(f^b_n) + \nu_n^b \int \phi_{f^b_n}(x)f^b_n(x,v)dxdv  -\nu^b_n \int \phi_{f^b_n}(x)Q^{\ast b,\nu^b_n \phi_{f^b_n}}(x,v)dxdv,
\end{array}$$
from the inequality $\eqref{ineg_rearrang}$. Observing that
$$-\int \phi_{f^b_n}(x)Q^{\ast b,\nu^b_n \phi_{f^b_n}}(x,v)dxdv=\frac{1}{2}\left(E_{pot}(f^b_n)+E_{pot}(Q^{\ast b,\nu^b_n \phi_{f^b_n}})-E_{pot}(Q^{\ast b,\nu^b_n \phi_{f^b_n}}-f^b_n)\right),$$
we get
$$
T_b(Q^{\ast b,\nu^b_n \phi_{f^b_n}})  \leq  T_b(f^b_n) + \frac{\nu^b_n}{2}\left(E_{pot}(Q^{\ast b,\nu^b_n \phi_{f^b_n}})-E_{pot}(f^b_n) \right)-\frac{\nu^b_n}{2} E_{pot}(Q^{\ast b,\nu^b_n \phi_{f^b_n}}-f^b_n).
$$
Since
$$E_{pot}(Q^{\ast b,\nu^b_n \phi_{f^b_n}})=E_{pot}(f^b_n) =E_{pot}(Q),$$
we deduce that  $T_b(Q^{\ast b,\nu^b_n \phi_{f^b_n}}) \leq T_b(f^b_n)$. By Lemma \ref{rearrangement} {\em (iv)}, this inequality becomes an equality if and only if $f^b_n=Q^{\ast b,\nu^b_n \phi_{f^b_n}}$. The proof of Lemma \ref{lemma_epot} is complete.
\qed

\bs
\ni
{\em Step 3: construction of $Q_b$, minimizer of \eqref{minTb}.} Let $b\in [0,b^*]$, where $b^*$ is defined in Lemma \ref{lemma_epot}, and let $f_n^b$ be a minimizing sequence of the variational problem \fref{minTb}. Then, the sequence $(\nu_n^b)$ given by Lemma \ref{lemma_epot} lies in a compact interval $[\nu^-,\nu^+]$. Up to a subsequence, $(\nu^b_n)$ converges to some $\bar \nu_b>0$ as $n\to +\infty$.
By Lemma \ref{rearrangement}, we have
$$Q^{\ast b,\nu^b_n \phi_{f^b_n}} \rightarrow Q^{\ast b,\bar \nu_b \phi_{f_b}} \ \textrm{in}\ \ L^1\cap L^p,$$
where $f_n^b\rightharpoonup f_b$ in $L^p$.
Moreover, from the kinetic control \eqref{control_kinetic}, $(Q^{\ast b,\nu^b_n \phi_{f^b_n}} )$ is bounded in $\mathcal{E}_{j,rad}$ and thus,
$$E_{pot}(Q^{\ast b,\bar \nu_b \phi_{f_b}})=\lim_{n\rightarrow+\infty} E_{pot}(Q^{\ast b,\nu^b_n \phi_{f^b_n}})=E_{pot}(Q).$$
Let us denote $Q_b:=Q^{\ast b,\bar \nu_b \phi_{f_b}}$ and make another rearrangement. Applying Lemma \ref{lemma_epot},  there exists $\nu_b>0$ such that \\
\hspace*{5mm}(i) $E_{pot}(Q^{\ast b,\nu_b \phi_{Q_b}})=E_{pot}(Q)$. \\
\hspace*{5mm}(ii) $T_b(Q^{\ast b,\nu_b \phi_{Q_b}})\leq T_b(Q_b)$ with equality only if $Q^{\ast b,\nu_b \phi_{Q_b}}=Q_b$.\\
By lower semicontinuity, we have 
$$T_b\leq T_b(Q_b)\leq \liminf_{n\to +\infty}T_b(Q^{\ast b,\nu^b_n \phi_{f^b_n}})\leq \lim_{n\to +\infty}T_b(f_n^b)=T_b.$$ Therefore $T_b(Q^{\ast b,\nu_b \phi_{Q_b}})=T_b(Q_b)=T_b$ which implies that $Q_b=Q^{\ast b,\nu_b \phi_{Q_b}}$. In particular, $Q_b$ takes the desired form \fref{desired} and similar arguments as in Section \ref{subsect2}, Step 3, give the regularity of $Q_b$ stated in Theorem \ref{thm6}.

\bs
\ni
{\em Step 4.} 
We prove here that the above constructed sequence $(Q_b)$ converges to $Q$ in $\mathcal{E}_j$, as $b\rightarrow 0$.  Remark first that $Q_0=Q$ and then $\nu_0=1$. Indeed, we claim that $Q_0$ and $Q$ are two radially symmetric equimeasurable steady states of \fref{vm} (with $\delta=0$), which minimize \fref{def-K}, and have the same kinetic energy. This enables to apply Lemma \ref{theo2} {\em (ii)} and conclude that $Q_0=Q$. Let us prove this claim. First, since $Q$ is a steady state of \fref{vm} which minimizes \fref{def-K}, and since $Q_0$ is equimeasurable to $Q$, we have
$$K(Q_0)\geq K(Q)=1.$$
Second, $Q_0$ being a minimizer of \fref{minTb} with $b=0$, and since $E_{pot}(Q_0)=E_{pot}(Q)$, we also have
$$K(Q)\geq K(Q_0).$$
This yields $K(Q)= K(Q_0)=1$ and then $Q$ and $Q_0$ are both minimizers of \fref{def-K}. Since these functions are equimeasurable, the claim is proved.

Now, similarly as for \eqref{absurd1}, one can prove that 
$$ \limsup_{b\rightarrow 0} \int \frac{|v|^2}{2} Q_b\leq \int \frac{|v|^2}{2} Q.$$
Moreover, since $Q$ is a minimizer of \eqref{def-K}, the function $Q_b$ satisfies $\dis \int |v|^2 Q_b \geq \int |v|^2 Q$ for all $b$. Thus, we have
\be
\label{z4}
\int \frac{|v|^2}{2} Q_b\to \int \frac{|v|^2}{2} Q\quad \mbox{as }b\to 0
\ee
and the sequence $(Q_b)$ satisfies 
$$ Q_b \in \mbox{Eq}(Q) \quad \mbox{ and }\quad  \frac{\mathcal{H}(Q_b)}{\| |v|^2 Q_b \|_{L^1}} \rightarrow 0.$$
Thus, by Proposition \ref{propstab}, one deduces that we have
\be Q_b\left(\lambda_b \,x,\frac{v}{\lambda_b}\right) \rightarrow Q \ \textrm{in}\  \mathcal{E}_j,\ \  \textrm{where} \ \ \lambda_b=\left(\frac{ \| |v|^2 Q \|_{L^1}}{ \| |v|^2 Q_b \|_{L^1}}\right)^{1/2}.\ee
From \fref{z4}, we finally deduce that $\lambda_b\to 1$ and that
\be Q_b \rightarrow Q \ \textrm{in}\  \mathcal{E}_j\quad \mbox{as }b\to 0.\label{convQbQ}\ee

\bs
\ni
{\em Step 5: convergence of $(\nu_b)$ as $b \to 0$.} We recall that $Q_b$ takes the form \fref{desired}, thus satisfies the equation
\be v\cdot \na_x Q_b -\nu_b \na_x\phi_{Q_b} \cdot\na_v Q_b +b \chi(x) \left(x\cdot \na_x Q_b-v\cdot\na_v Q_b \right)-b(x\cdot v) \na_x \chi \cdot \na_v Q_b=0. \label{eqstatter} \ee
We aim to apply Lemma \ref{lemvirial}. Multiply the two last terms of \fref{eqstatter} by $x\cdot v$ and integrate on $\RR^6$. Integrations by parts give
$$ \int_{\RR^6}(x\cdot v) \, b \chi(x) \left(x\cdot \na_x Q_b-v\cdot\na_v Q_b \right) dxdv= - b \int_{\RR^6}(x\cdot v) (x \cdot \nabla \chi)  Q_b dxdv$$
and
$$ -\int_{\RR^6}b (x\cdot v)^2 \,\na_x \chi \cdot \na_v Q_b dxdv= +2b \int_{\RR^6}(x\cdot v) (x \cdot \nabla \chi)  Q_b dxdv.$$
Then, from Lemma \ref{lemvirial},
\be \label{z5} \nu_b E_{pot}(Q) - \int_{\RR^6}|v|^2 Q_b(x,v)dxdv=b \int_{\RR^6}(x\cdot v) (x \cdot \nabla \chi)  Q_b dxdv,\ee
where
 $$ \left| b \int_{\RR^6}(x\cdot v) (x \cdot \nabla \chi)  Q_b dxdv\right| \leq b R_\chi^2 \| \nabla \chi \|_{L^\infty} \frac{\|Q\|_{L^1}+\| |v|^2Q_b\|_{L^1}}{2}.$$ 
Using \fref{z4}, \fref{z5} and $\mathcal H(Q)=0$, we obtain $\nu_b \rightarrow 1$ as $b\rightarrow 0$.

\bs
\ni
{\em Step 6: choice of $r_\chi$.} Now, we seek $r_\chi$ such that, for all $b\in[0,b^\ast]$, $\mbox{Supp}( \rho_{Q_b})\subset B(0,r_\chi)$.
We have seen that Lemma \ref{rearrangement} {\em (i)} gives
\be \mbox{Supp}(Q_b)\subset \left\{(x,v),\ \nu_b \phi_{Q_b}(x)<a_{b, \nu_b \phi_{Q_b}}^{-1}(r_\ast) + \frac{(b^\ast R_\chi)^2}{2} \right\},\label{inclusion1} \ee
where $r_*$ is defined by \fref{rstar}.
Remark first that from the continuity of the function $(b,\phi)\mapsto a_{b, \phi}^{-1}(r_\ast)$ in Lemma \ref{jacobian}, we deduce
$$a_{b, \nu_b \phi_{Q_b}}^{-1}(r_\ast)\rightarrow a_{0, \phi_{Q}}^{-1}(r_\ast)<0\quad \mbox{as }b\to 0.$$
Let $b^*_3>0$ small enough such that for all $0<b<b^*_3$
$$\frac{a_{b, \nu_b \phi_{Q_b}}^{-1}(r_\ast)}{\nu_b}<\frac{a_{0, \phi_{Q}}^{-1}(r_\ast)}{2}, \qquad \frac{(b^\ast R_\chi)^2}{2\nu_b}<\frac{\left|a_{0, \phi_{Q}}^{-1}(r_\ast)\right|}{4}\quad \mbox{and}\quad \|Q_b\|_{\calE_j}\leq 2\|Q\|_{\calE_j},$$
where we recall that $Q_b\to Q$ in $\calE_j$.
Then, for $b\leq\min(b^*,b^*_3)$, \fref{inclusion1} yields
\be \mbox{Supp}(\rho_{Q_b})\subset \left \{x\in\R^3,\quad \phi_{Q_b}(x)<\frac{a_{0, \phi_{Q}}^{-1}(r_\ast)}{4} \right\}.\label{inclusion2}\ee
Moreover, by \eqref{controlphi}, the function $\phi_{Q_b}$ satisfies 
\be
\label{z9}\phi_{Q_b}(x)\geq -\frac{C\|Q_b\|_{\calE_j}}{|x|^{3/2}}\geq-\frac{2C\|Q\|_{\calE_j}}{|x|^{3/2}},
\ee
where $C$ is a universal constant. Now we set
\be
\label{choix rchi}
r_\chi = \left( \frac{8C\|Q\|_{\calE_j}}{a_{0, \phi_{Q}}^{-1}(r_\ast)}\right)^{2/3},
\ee
and observe that, as mentionned in Proposition \ref{lemmasimilar}, the constant $r_\chi$ depends only on $Q$ and $\phi_Q$.
From \fref{inclusion2} and \fref{z9}, renoting $b^*=\min(b^*,b^*_3)$, we deduce that, for $0\leq b\leq b^*$,
$$ \mbox{Supp}(\rho_{Q_b})\subset \left \{x\in\R^3,\  |x|< r_\chi \right\}.$$
Finally we have just to prove the Lemme \pref{trouve-bast} to complete the proof of Proposition \ref{lemmasimilar}.
\qed

\bs
\ni
{\em Proof of Lemma \pref{trouve-bast}.} We proceed by contradiction. We suppose that there exists a sequence $b_k$ going to 0 as $k\to +\infty$ and a sequence $(f_k)$  such that, for all $k$, the function $f_k\in \mathcal{E}_j$ satisfies \eqref{hyp1-bast}, \eqref{hyp2-bast} and
\be \limsup_{\nu\rightarrow +\infty} E_{pot}(Q^{\ast b_k,\nu \phi_{f_k}} ) \leq E_{pot}(Q). \label{contr-epot} \ee
From the Cauchy-Schwarz inequality, we have for all $\nu >0$
\be -\int_{\R^6} \phi_{f_k} Q^{\ast b_k,\nu \phi_{f_k}} \leq E_{pot}(Q^{\ast b_k,\nu \phi_{f_k}} )^\frac{1}{2} E_{pot}(f_k)^\frac{1}{2}, \label{cs-nu}\ee
and thus the inequality \eqref{contr-epot} implies, for all $k$,
\be \limsup_{\nu\rightarrow +\infty} \left(-\int_{\R^6} \phi_{f_k} Q^{\ast b_k,\nu \phi_{f_k}} \right) \leq E_{pot}(Q). \label{contr-epot2}\ee 
Moreover, by Lemma \ref{rearrangement} {\em (i)}, for all $k$, we have
$$\begin{array} {rcl}
{\rm Supp}(Q^{\ast b_k,\nu \phi_{f_k}}) & \subset &\displaystyle \left\{(x,v),\quad \frac{|v|^2}{2}-b_kR_\chi |v|+\nu \phi_{f_k}(x)<a_{b_k,\nu \phi_{f_k}}^{-1}(r_\ast)\right\} \\
 & \subset &\displaystyle \left\{(x,v),\quad  \phi_{f_k}(x)<  \frac{a_{b_k,\nu \phi_{f_k}}^{-1}(r_\ast)}{\nu} + \frac{(b_k R_\chi)^2}{2\nu} \right\}.
\end{array}$$
Now, from the explicit expression of $a_{b_k,\nu \phi_{f_k}}$ and for $e=a_{b_k,\nu \phi_{f_k}}^{-1}(r_\ast)$, we get
$$r_\ast=a_{b_k,\nu \phi_{f_k}}(e) \geq a_{0,\nu \phi_{f_k}}(e)=\nu^{3/2} a_{0, \phi_{f_k}}\left(\frac{e}{\nu}\right),$$
and then,
\be
\label{www}\frac{e}{\nu} + \frac{(b_k R_\chi)^2}{2\nu}\leq  a_{0, \phi_{f_k}}^{-1}(\nu^{-3/2}r_\ast)+ \frac{(b_k R_\chi)^2}{2\nu}.
\ee
Since, as $\nu\to +\infty$, the right-hand side of \fref{www} goes to $a_{0, \phi_{f_k}}^{-1}(0)=-\| \phi_{f_k}\|_{L^\infty}\in [-\infty,0[$, we deduce that 
\be \limsup_{\nu\rightarrow +\infty} \left( -\int_{\R^6} \phi_{f_k} Q^{\ast b_k,\nu \phi_{f_k}}\right) \geq  \| \phi_{f_k}\|_{L^\infty} \| Q\|_{L^1} \geq \| \phi_{f_k}\|_{L^\infty} \| f_k\|_{L^1}\geq E_{pot}(f_k).\label{strictenfait} \ee
Hence, from \eqref{contr-epot2} and $E_{pot}(f_k)=E_{pot}(Q)$, the inequalities in \eqref{strictenfait} are all equalities. Thus the sequence $(f_k)$ satisfies for all $k$, 
\be \label{cite}\|f_k\|_{L^1}=\|Q\|_{L^1} \,\,\mbox{ and }\,\, \| \phi_{f_k}\|_{L^\infty} \| f_k\|_{L^1}=E_{pot}(Q). \ee
Now we will prove that $(f_k)$ is a minimizing sequence for the variational problem \fref{def-K} with $M_1=\|Q\|_{L^1}$ and $M_j=\|j(Q)\|_{L^1}$.
First, from \eqref{hyp2-bast} and \fref{cite}, we have  $T_{b_k}(f_k) \leq T_{b_k}$. Combining it with 
\be T_{b_k}(f_k) \geq \int \frac{|v|^2}{2} f_k- {b_k} R_\chi \int \frac{1+|v|^2}{2}f_k \geq (1-{b_k} R_\chi)\int \frac{|v|^2}{2} f_k-\frac{{b_k}R_\chi}{2}\|Q\|_{L^1}\ee
and with 
\be T_{b_k}\leq T_{b_k}(Q)=\int \frac{|v|^2}{2} Q \ee
given by the definition of $T_{b_k}$ and the radial symmetry of $Q$, we obtain
\be \limsup_{k\to +\infty} \int \frac{|v|^2}{2} f_k\leq \int \frac{|v|^2}{2} Q. \label{absurd1}\ee
Finally, using $E_{pot}(f_k)=E_{pot}(Q)$, $\calH(Q)=0$ and the interpolation inequality \fref{ineg-inter-M} which gives a lower bound for $\| |v|^2 f_k \|_{L^1}$, one gets
\be
\|f_k\|_{L^1} =  \| Q\|_{L^1},\quad  \limsup_{k\to +\infty} \|j(f_k)\|_{L^1} \leq \| j(Q) \|_{L^1}\mbox{ and }\limsup_{k\to +\infty} \frac{\mathcal{H}(f_k)}{\| |v|^2 f_k \|_{L^1}} \leq 0.\ee
Thus, following the proof of Proposition \ref{propstab}, one deduces that there exists a minimizer $f$ of the variational problem  \fref{def-K} with $M_1=\|Q\|_{L^1}$ and $M_j=\|j(Q)\|_{L^1}$ having the same kinetic energy as $Q$, and such that, up to a subsequence,
\be \hat f_k (x,v)=f_k\left(\lambda_k x,\frac{v}{\lambda_k}\right) \rightarrow f \ \textrm{in}\  \mathcal{E}_j,\ \  \textrm{where} \ \ \lambda_k=\left(\frac{ \| |v|^2 Q \|_{L^1}}{ \| |v|^2 f_k \|_{L^1}}\right)^{1/2}.\label{fb-f}\ee
Recall that $Q$ is a steady state of \fref{vm}, thus $J(M_1,M_j)=1$. Since $\int |v|^2f=\int |v|^2Q$, this yields $E_{pot}(f)=E_{pot}(Q)$. Furthermore, from
$$E_{pot}(Q)=E_{pot}(f_k)=\lambda_k^2 E_{pot}(\hat{f}_k)\,\mbox{and}\, E_{pot}(\hat{f}_k)\rightarrow E_{pot}(f)=E_{pot}(Q),$$ we deduce that $$\lambda_k\rightarrow 1\mbox{ as }k\to +\infty.$$
Moreover, we deduce from Theorem \ref{theo1} that $f$ is continuous and satisfies the expression \eqref{expression-Q}. Therefore $\phi_f$ cannot be constant on $\mbox{Supp}(\rho_f)$, which implies
\be
\label{z3}
E_{pot}(Q)=E_{pot}(f)<\| \phi_{f}\|_{L^\infty}\| f \|_{L^1}=\| \phi_{f}\|_{L^\infty}\| Q \|_{L^1}.
\ee
On the other hand, from \fref{cite} and the rescaling inequalities of Appendix \ref{appA}, we get
$$\|\phi_{\hat f_k}\|_{L^\infty}=\lambda_k^2 \frac{E_{pot}(f)}{\|f\|_{L^1}}\to \frac{E_{pot}(f)}{\|f\|_{L^1}}\mbox{ as }k\to +\infty.$$
Hence, since $\phi_{\hat f_k}$ converges to $\phi_f$ in $L^3$, we have
$$\|\phi_f\|_{L^\infty}\leq \lim_{k\to +\infty} \|\phi_{\hat f_k}\|_{L^\infty}=\frac{E_{pot}(f)}{\|f\|_{L^1}},$$
which contradicts the strict inequality \fref{z3}. The proof of Lemma \ref{trouve-bast} is complete.
\qed

\appendix

\section{Rescalings}
\label{appA}

Let $f\in \mathcal{E}_j$ and let $\gamma>0$, $\lambda>0$ and $\mu>0$. Then the rescaled function $\tilde{f}$ defined by $\tilde{f}(x,v)=\gamma f(\frac{x}{\lambda},\mu v)$ satisfies the following identities.

\ms
\ni
{\em Norms}
$$\|\tilde{f}\|_{L^1}=\gamma\frac{\lambda^3}{\mu^3}\|f\|_{L^1}\ \ ;\  \  \|j(\tilde{f})\|_{L^1}=\frac{\lambda^3}{\mu^3}\|j(\gamma f)\|_{L^1}\ \ ; \ \ \left\| |v|^2\tilde{f}\right\|_{L^1}=\gamma\frac{\lambda^3}{\mu^5} \left\| |v|^2 f\right\|_{L^1}.$$

\ni
{\em Functions}
$$\rho_{\tilde{f}}(x)= \frac{\gamma}{\mu^3} \rho_f \left(\frac{x}{\lambda}\right)\ \ ; \    \phi_{\tilde{f}}^P(x)= \gamma\frac{\lambda^2}{\mu^3}\phi_{f}^P\left(\frac{x}{\lambda}\right) \ \ ;  \   \phi_{\tilde{f}}^M(x)= \gamma\frac{\lambda}{\mu^3}\phi_{f}^M\left(\frac{x}{\lambda}\right).$$

\ni
{\em Potential energy}
$$E_{pot}^P(\tilde{f})=\gamma^2\frac{\lambda^5}{\mu^6}E_{pot}^P(f) \ \ ; \  E_{pot}^M(\tilde{f})=\gamma^2\frac{\lambda^4}{\mu^6}E_{pot}^M(f).$$

\begin{lemma}\label{lemresc} Let $f \in \mathcal{E}_j\backslash \{0\}$ and $M_1, M_j>0$. Then there exists an unique pair of positive constants $(\gamma,\lambda)$ such that the rescaled function $\tilde{f}$ defined by \be \tilde{f}(x,v)=\gamma f\left(\frac{\gamma^{1/3}}{\lambda^{1/3}}x,v\right) \label{rescaling} \ee 
satisfies $\|\tilde f\|_{L^1}=M_1$ and $\|j(\tilde f)\|_{L^1}=M_j$. Moreover, $\gamma$ and $\lambda$ satisfy
\be \lambda=\frac{M_1}{\left\|f\right\|_{L^1}} \ \ \textrm{and} \ \ \min(\gamma^{p-1}, \gamma^{q-1} )\leq \frac{M_j\left\|f\right\|_{L^1}}{M_1\left\|j(f)\right\|_{L^1} }\leq \max(\gamma^{p-1}, \gamma^{q-1} ).\label{lambda-gamma}\ee
\end{lemma}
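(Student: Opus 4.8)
The plan is to decouple the two parameters and reduce the problem to a single scalar equation. Using the rescaling identities recalled at the beginning of this appendix, applied with dilation factor $(\lambda/\gamma)^{1/3}$ in the $x$ variable and factor $1$ in the $v$ variable, one computes directly that the function $\tilde f$ defined by \eqref{rescaling} satisfies
$\|\tilde f\|_{L^1}=\lambda\|f\|_{L^1}$ and $\|j(\tilde f)\|_{L^1}=\frac{\lambda}{\gamma}\|j(\gamma f)\|_{L^1}$.
Hence the first constraint $\|\tilde f\|_{L^1}=M_1$ forces $\lambda=M_1/\|f\|_{L^1}$, which is already the first identity in \eqref{lambda-gamma}; substituting this value, the second constraint $\|j(\tilde f)\|_{L^1}=M_j$ becomes the scalar equation $g(\gamma)=M_j\|f\|_{L^1}/M_1$, where $g(\gamma):=\frac1\gamma\int_{\R^6}j(\gamma f)$ for $\gamma>0$.

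It then remains to prove that $g$ is a continuous strictly increasing bijection from $(0,+\infty)$ onto $(0,+\infty)$ and to extract the two-sided bound. All of this follows from the nondichotomy condition \eqref{j-nondicho}. For strict monotonicity: if $\gamma_2>\gamma_1>0$, set $b=\gamma_2/\gamma_1>1$ and apply the pointwise inequality $j(bt)\geq b^p j(t)$ with $t=\gamma_1 f(x,v)$; integrating gives $g(\gamma_2)\geq b^{p-1}g(\gamma_1)>g(\gamma_1)$ since $p>1$ and $g(\gamma_1)>0$ (the latter because $j(s)>0$ for $s>0$ by (H1) and $f\not\equiv 0$). The same pointwise inequalities, used with $b=\gamma$ when $\gamma\geq1$ and with $b=1/\gamma$ when $\gamma\leq1$, yield
$\min(\gamma^{p-1},\gamma^{q-1})\,\|j(f)\|_{L^1}\leq g(\gamma)\leq\max(\gamma^{p-1},\gamma^{q-1})\,\|j(f)\|_{L^1}$;
in particular $g(\gamma)\to0$ as $\gamma\to0^+$ and $g(\gamma)\to+\infty$ as $\gamma\to+\infty$, using that $\|j(f)\|_{L^1}$ is finite (since $f\in\mathcal{E}_j$) and strictly positive. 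Continuity of $g$ follows from the continuity of $j$ and the domination $j(\gamma f)\leq\max(1,\gamma_1^q)\,j(f)\in L^1$ valid for $\gamma\leq\gamma_1$, via dominated convergence. By the intermediate value theorem there is a $\gamma>0$ with $g(\gamma)=M_j\|f\|_{L^1}/M_1$, unique by strict monotonicity; dividing the displayed two-sided bound by $\|j(f)\|_{L^1}$ gives precisely the second part of \eqref{lambda-gamma}.

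There is no real obstacle: once the problem is reduced to solving $g(\gamma)=\text{const}$, every required estimate is an immediate consequence of \eqref{j-nondicho}. The only points deserving a line of care are the justification of the continuity of $g$ (domination on bounded $\gamma$-intervals) and the remark that $\|j(f)\|_{L^1}>0$, which is what makes $g$ non-trivial and the limiting behaviour meaningful.
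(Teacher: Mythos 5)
Your proof is correct and follows the same overall strategy as the paper: decouple $\lambda$ from the first constraint, reduce the second constraint to a scalar equation in $\gamma$, and conclude by monotonicity plus the intermediate value theorem, with the two-sided bound \eqref{lambda-gamma} coming from the nondichotomy form of (H2). The one place where you diverge is the proof of strict monotonicity. The paper works with $h(\gamma)=\|j(\gamma f)\|_{L^1}/(\gamma\|j(f)\|_{L^1})$ and shows $h'(\gamma)>0$ by differentiating under the integral sign and invoking the inequality $tj'(t)\geq pj(t)$ from (H2); you instead compare $g(\gamma_2)$ and $g(\gamma_1)$ directly via the pointwise inequality $j(bt)\geq b^p j(t)$ integrated over $\RR^6$. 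The two inequalities are equivalent (as the paper itself remarks when introducing \eqref{j-nondicho}), but your route is somewhat more elementary: it sidesteps the need to justify differentiation of $\gamma\mapsto\int j(\gamma f)$ under the integral sign, a point the paper passes over silently. Conversely, the paper's derivative computation yields the quantitative estimate $h'(\gamma)\geq (p-1)h(\gamma)/\gamma$, which would give a modulus of strict monotonicity if one needed it; here it is not used for anything beyond positivity, so nothing is lost. Your treatment of continuity by dominated convergence on bounded $\gamma$-intervals, and your remark that $\|j(f)\|_{L^1}>0$ because $f\not\equiv 0$, are both correct and fill in details the paper leaves implicit.
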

\begin{proof}[Proof. ] The rescaling \eqref{rescaling} gives immediately
$$\|\tilde{f}\|_{L^1}=\lambda\|f\|_{L^1}\quad \mbox{and}\quad \|j(\tilde{f})\|_{L^1}=\frac{\lambda}{\gamma}\|j(\gamma f)\|_{L^1}.$$
Hence, $\tilde f$ satisfies $\|\tilde f\|_{L^1}=M_1$ and $\|j(\tilde f)\|_{L^1}=M_j$ as soon as
$$ \lambda=\frac{M_1}{\left\|f\right\|_{L^1}} \ \ \textrm{and} \ \  \frac{\left\|j(\gamma f)\right\|_{L^1}}{\gamma \left\| j(f)\right\|_{L^1} }= \frac{M_j\left\|f\right\|_{L^1}}{M_1\left\|j(f)\right\|_{L^1} }.$$
The first parameter $\lambda$ is then uniquely determined. Notice also that \eqref{lambda-gamma} is a direct consequence of  the nondichotomy condition \eqref{j-nondicho}. It remains to prove the existence of a unique suitable $\gamma$. 

Consider now the function of $\gamma\in \RR_+^*$ defined by 
$$h(\gamma)= \frac{\left\|j(\gamma f)\right\|_{L^1}}{\gamma \left\| j(f)\right\|_{L^1} }.$$
{}From the nondichotomy condition \eqref{j-nondicho}, we have
$$\lim_{\gamma\to 0} h(\gamma)=0,\qquad \lim_{\gamma\to +\infty}h(\gamma)=+\infty.$$
Moreover, from a direct calculation, one gets
$$h'(\gamma)=\frac{\left\|j'(\gamma f)f\right\|_{L^1}}{\gamma \left\| j(f)\right\|_{L^1} }-\frac{\left\|j(\gamma f)\right\|_{L^1}}{\gamma^2 \left\| j(f)\right\|_{L^1} }\geq (p-1)\frac{\left\|j(\gamma f)\right\|_{L^1}}{\gamma^2 \left\| j(f)\right\|_{L^1} }>0,$$
where we used Assumption (H3) on the function $j$. Hence, there exists a unique $\gamma\in \RR_+^*$ such that
$$h(\gamma)=\frac{M_j \|f\|_{L^1}}{M_1\|j(f)\|_{L^1}}$$
and the Lemma is proved.
\end{proof}

\section{Some properties of radially symmetric potentials}
\label{appC}
\begin{lemma}\label{propC1} There exists a constant $C>0$ such that, for all $f\in \calE_j$ spherically symmetric, we have for all $x\in \RR^3$ 
\be
\label{nono}
 -\frac{C}{|x|}\left\|f\right\|_{L^1}\leq \phi^P_{f}(x)\leq 0.
 \ee
Moreover, for all $0<\alpha<1$, there exists a constant $C_\alpha>0$ such that, for all $f\in \calE_j$ spherically symmetric, we have for all $x\in \RR^3$ 
\be
\label{nono2}
-\frac{C_\alpha}{|x|^{1+\alpha}}\left\|f\right\|_{\calE_j}\leq \phi^M_{f}(x)\leq 0,
 \ee
Recall that $\phi^P_f$ and $\phi^M_f$ are defined by \fref{2pot}.
\end{lemma}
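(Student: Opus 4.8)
The plan is to treat the Poisson and Manev potentials separately; in both cases $\phi_f^P\le0$ and $\phi_f^M\le0$ are immediate from $\rho_f\ge 0$ and the positivity of the kernels in \eqref{2pot}, so the real content is the lower bounds, i.e.\ decay estimates for $-\phi_f^P$ and $-\phi_f^M$. For \eqref{nono} I would use Newton's theorem: as $\rho_f$ is radial, the spherical average satisfies $\frac1{4\pi}\int_{S^2}|x-s\omega|^{-1}d\omega=\min(|x|^{-1},s^{-1})$, so after passing to spherical coordinates in $-\phi_f^P(x)=\frac1{4\pi}\int_{\RR^3}\rho_f(y)|x-y|^{-1}dy$ one gets $-\phi_f^P(x)=\int_0^{+\infty}\rho_f(s)s^2\max(|x|,s)^{-1}ds\le|x|^{-1}\int_0^{+\infty}\rho_f(s)s^2ds=\|f\|_{L^1}/(4\pi|x|)$.

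For \eqref{nono2}, fix $\alpha\in(0,1)$, set $r=|x|$ and write $-\phi_f^M(x)=\frac1{2\pi^2}\int_{\RR^3}\rho_f(y)|x-y|^{-2}dy$. I would split $\RR^3=D_{\mathrm{far}}\cup D_{\mathrm{ann}}$ with $D_{\mathrm{far}}=\{|y|\le r/2\}\cup\{|y|\ge 2r\}$ and $D_{\mathrm{ann}}=\{r/2<|y|<2r\}$. On $D_{\mathrm{far}}$ one has $|x-y|\ge\tfrac12\max(|x|,|y|)$, whence $|x-y|^{-2}\le 4\max(|x|,|y|)^{-2}\le 4\,|x|^{-1-\alpha}|y|^{\alpha-1}$, so the $D_{\mathrm{far}}$–contribution is at most $\tfrac{2}{\pi^2|x|^{1+\alpha}}\int_{\RR^3}\rho_f(y)|y|^{\alpha-1}dy$; the latter is $\le C_\alpha\|f\|_{\calE_j}$, splitting at $|y|=1$ (on $|y|\ge1$ bound $|y|^{\alpha-1}\le1$ against $\|\rho_f\|_{L^1}$; on $B_1$ use Hölder with $\rho_f\in L^{p_0}$ and $|y|^{\alpha-1}\in L^{p_0'}(B_1)$, which is licit since $(1-\alpha)p_0'<3$). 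Here $p_0:=\frac{5p-3}{3p-1}\in(\tfrac32,\tfrac53]$, so $p_0'\le3$, and $\|\rho_f\|_{L^1}+\|\rho_f\|_{L^{p_0}}\le C\|f\|_{\calE_j}$ is the standard interpolation control of $\rho_f$ already used in the proof of Theorem~\ref{theo1}.

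The delicate piece is $D_{\mathrm{ann}}$, where $|x-y|^{-2}$ is not even locally integrable in $\RR^3$; here radial symmetry must be used. Via $\frac1{4\pi}\int_{S^2}|x-s\omega|^{-2}d\omega=\frac1{2rs}\ln\frac{r+s}{|r-s|}$, the $D_{\mathrm{ann}}$–contribution becomes $\frac1{\pi r}\int_{r/2}^{2r}\rho_f(s)\,s\,\ln\frac{r+s}{|r-s|}\,ds$, in which only a logarithmic singularity at $s=r$ survives. On $(r/2,2r)$ I would bound $\ln\frac{r+s}{|r-s|}\le\ln 3+\ln\frac{r}{|r-s|}$: the bounded part gives $\le\frac{C\|f\|_{L^1}}{r^2}$ after $\int_{r/2}^{2r}\rho_f(s)ds\le\frac4{r^2}\int_0^{+\infty}\rho_f(s)s^2ds$; for the log–singular part I would apply Hölder with exponents $m=\frac3{2+\alpha}$ (which lies in $(1,p_0)$) and $m'=\frac3{1-\alpha}$, using $\|\ln\tfrac{r}{|r-\cdot|}\|_{L^{m'}(r/2,2r)}=C_\alpha r^{1/m'}$ (change of variables $s=r\tau$), and interpolating the rescaled bounds $\|\rho_f\|_{L^1(r/2,2r;ds)}\le C\|f\|_{L^1}r^{-2}$ and $\|\rho_f\|_{L^{p_0}(r/2,2r;ds)}\le C\|\rho_f\|_{L^{p_0}}r^{-2/p_0}$ (both from $s^2\ge r^2/4$ on that interval) to get $\|\rho_f\|_{L^m(r/2,2r;ds)}\le C_\alpha\|f\|_{\calE_j}r^{-(2-2/m')}$; the powers of $r$ then combine as $\tfrac1r\cdot r\cdot r^{-(2-2/m')}\cdot r^{1/m'}=r^{-(2-3/m')}=r^{-(1+\alpha)}$, which together with the $r^{-2}$ term yields \eqref{nono2} for $|x|\ge1$ (the estimate being unrestrictive for $|x|\le1$, where $r^{-1-\alpha}$ blows up).

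The main obstacle is exactly this annular region: a direct Hölder pairing of $\rho_f\in L^{p_0}$ against $|x-\cdot|^{-2}$ is impossible since the kernel fails to be in $L^{p_0'}$ near $y=x$, and even after the radial reduction a Hölder estimate with $L^{p_0}$ alone only gives decay $r^{-(3/p_0-1)}$ with $3/p_0-1<1$, which is too weak. The point is to take the Lebesgue exponent $m$ close to $1$, so that the decay is carried by the $L^1$–type bound (equivalently by the finiteness of $\int_0^{+\infty}\rho_f(s)s^2\,ds$), while $\ln\frac{r}{|r-\cdot|}$ still lies in $L^{m'}$ for every finite $m'$.
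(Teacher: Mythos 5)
Your proof and the paper's rest on the same key idea: once one averages over the sphere, the Manev kernel becomes $g_r^M(s)=\frac{1}{rs}\ln\frac{r+s}{|r-s|}$, whose singularity at $s=r$ is only logarithmic, so it can be paired against $\rho_f$ in some $L^m(s^2\,ds)$ by H\"older. The organization is genuinely different, though. The paper does a single H\"older on all of $(0,+\infty)$: it exploits the exact scaling $g_r^M(s)=r^{-2}g_1^M(s/r)$, notes $g_1^M\in L^k(s^2\,ds)$ for $k>3$, deduces $\|g_r^M\|_{L^k(s^2\,ds)}=Cr^{-(2-3/k)}$, and pairs against $\rho_f\in L^{k'}(s^2\,ds)$ with $k'<3/2$, obtaining $r^{-(1+\alpha)}$ with $\alpha=1-3/k$. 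You instead split $\RR^3$ into a far region (handled by the elementary pointwise bound $|x-y|^{-2}\le 4|x|^{-1-\alpha}|y|^{\alpha-1}$ there) and the annulus $r/2<|y|<2r$ (handled by the spherical average and an $L^m$--$L^{m'}$ H\"older with $m=3/(2+\alpha)$). Your $m$ plays the role of the paper's $k'$, and both routes produce the same decay, but the paper's scaling identity handles all ranges of $s$ at once and is shorter.

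As written, however, your estimate has a genuine gap for $|x|\le1$. After splitting $\ln\frac{r+s}{|r-s|}\le\ln 3+\ln\frac{r}{|r-s|}$, the singular term is correctly estimated by $r^{-(1+\alpha)}\|f\|_{\calE_j}$, but the constant term is bounded only via $\frac{\ln 3}{\pi r}\int_{r/2}^{2r}\rho_f(s)\,s\,ds\le Cr^{-2}\|f\|_{L^1}$. Your final bound is therefore $C_\alpha\|f\|_{\calE_j}\left(r^{-1-\alpha}+r^{-2}\right)$, and $r^{-2}>r^{-1-\alpha}$ exactly when $r<1$, so \fref{nono2} is not obtained there. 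Dismissing $|x|\le1$ as ``unrestrictive'' is not legitimate: the inequality is asserted for all $x\in\RR^3$, and the fact that its right-hand side diverges as $|x|\to 0$ does not by itself guarantee it bounds $\phi_f^M(x)$, which for $\rho_f$ merely in $L^1\cap L^{p_0}$ with $p_0\le 5/3$ need not be bounded near the origin. The repair stays entirely inside your framework: estimate the constant $\ln 3$ piece by the same $L^m$--$L^{m'}$ H\"older you use for the singular piece, namely
\begin{equation*}
\frac{\ln 3}{\pi r}\int_{r/2}^{2r}\rho_f(s)\,s\,ds\le \frac{2\ln 3}{\pi}\|\rho_f\|_{L^m(r/2,2r;\,ds)}\,\Bigl(\tfrac{3r}{2}\Bigr)^{1/m'}\le C_\alpha\|f\|_{\calE_j}\,r^{-2/m+1/m'}=C_\alpha\|f\|_{\calE_j}\,r^{-(1+\alpha)};
\end{equation*}
or, more simply, do not split at all and apply H\"older directly to $\ln\frac{r+s}{|r-s|}$, since the change of variable $s=r\tau$ gives $\bigl\|\ln\tfrac{r+\cdot}{|r-\cdot|}\bigr\|_{L^{m'}(r/2,2r)}=C_\alpha r^{1/m'}$ just as for the purely singular factor. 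With either correction your argument gives \fref{nono2} for all $x$, matching the paper.
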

\begin{proof}[Proof. ] 
Passing to the spherical coordinate $s=|y|$ and $x \cdot y=2rs \cos\theta$ in \fref{2pot}, one gets
\be      \phi_{f}^P(x) =- \int_{0}^{+\infty}\int_{0}^\pi \frac{\rho_{f}(s)\sin\theta}{2\left(s^2+r^2-2rs \cos\theta\right)^{1/2}}s^2dsd\theta=- \int_{0}^{+\infty} \rho_{f}(s) g^P_r(s)s^2ds,
\label{ex1}
\ee
\be \phi_{f}^M(x) =- \frac{1}{\pi} \int_{0}^{+\infty}\int_{0}^\pi \frac{\rho_{f}(s)\sin\theta}{s^2+r^2-2rs \cos\theta}s^2dsd\theta=- \frac{1}{\pi}  \int_{0}^{+\infty} \rho_{f}(s) g^M_r(s)s^2ds,
\label{ex2}
\ee
where
$$g^P_r(s)=\frac{\mathbf{1}_{\{s<r\}}(s)}{r}+\frac{\mathbf{1}_{\{s>r\}}(s)}{s} \ \ \textrm{and} \ \ g^M_r(s)=\frac{1}{sr} \ln \left|\frac{r+s}{r-s}\right|.$$
Note that 
$$g^P_r(s)=\frac{1}{r}g^P_{1}\left(\frac{s}{r}\right) \ \ \textrm{and} \ \ g^M_r(s)=\frac{1}{r^2}g^M_{1}\left(\frac{s}{r}\right).$$
Since $g^P_{1}$ belongs to $L^\infty$, \fref{ex1} yields directly \fref{nono}. Next, we remark that $g^M_1$ belongs to $L^k((0,+\infty),s^2ds)$ for all $k\in (3,+\infty)$, which gives
$$\left\| g^M_r  \right\|_{L^k((0,+\infty),s^2ds)}\leq  \frac{C}{r^{2-\frac{3}{k}}}.$$
We finally obtain \fref{nono2} by applying the H\"older inequality to \fref{ex2}. Indeed, thanks to interpolation inequalities and under Assumption (H2), $f\in \calE_j$ implies that $\rho_f\in L^1\cap L^{3/2}((0,+\infty),s^2ds)$. The proof of the lemma is complete.
\end{proof}

\begin{lemma}\label{propC2} Let $(f_n)_{n\geq 1}$ be a bounded sequence of $\mathcal{E}_j$ such that $\rho_{f_n}$ is radially symmetric. Then there exists $f \in \mathcal{E}_j$ such that, up to a subsequence,
$$\left\{ \begin{array} {l}
(i)\ f_n \rightharpoonup f \ \ \textrm{in} \ L^p(\R^6), \\
(ii)\ E_{pot}(f_n)\rightarrow E_{pot}(f), \\
(iii)\ \textrm{for all}\  \frac{3}{2}< q <\frac{3(5p-3)}{4p}, \ \ \phi_{f_n}^M \rightarrow \phi_f^M \ \ \textrm{in}\ L^q(\R^3).
\end{array}\right.$$
\end{lemma}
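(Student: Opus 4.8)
The plan is to combine a weak-compactness extraction in $L^p(\R^6)$ with an elliptic-regularity plus Rellich--Kondrachov argument for the potentials, using the radial symmetry of $\rho_{f_n}$ only to control the behaviour at infinity.

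First I would produce the weak limit. From the nondichotomy condition \eqref{j-nondicho} one has $t+j(t)\geq Ct^p$, so a bound on $(f_n)$ in $\calE_j$ forces $(f_n)$ to be bounded in $L^p(\R^6)$; extract a subsequence with $f_n\rightharpoonup f$ in $L^p(\R^6)$. Testing against nonnegative compactly supported functions gives $f\geq0$, and the convexity of $j$ (Assumption (H1)) together with Fatou's lemma yields weak lower semicontinuity of $g\mapsto\int(|v|^2+1)g$ and of $g\mapsto\int j(g)$, hence $f\in\calE_j$; this is item (i). Next, by the standard Vlasov interpolation (splitting $\int f_n\,dv$ at $|v|=R$ and optimizing in $R$) the sequence $(\rho_{f_n})$ is bounded in $L^1\cap L^\alpha(\R^3)$ with $\alpha=\frac{5p-3}{3p-1}\in(3/2,5/3]$, and likewise $\rho_f\in L^1\cap L^\alpha$. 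I would then check that $\rho_{f_n}\rightharpoonup\rho_f$ weakly in $L^\alpha(\R^3)$: for $\psi\in C_c^\infty(\R^3)$ write $\int\rho_{f_n}\psi=\int_{|v|\leq M}f_n\psi+\int_{|v|>M}f_n\psi$, where the first term converges because $\psi(x)\mathbf 1_{|v|\le M}\in L^{p'}(\R^6)$ and the second is bounded by $M^{-2}\|\psi\|_\infty\,\||v|^2f_n\|_{L^1}$; letting $n\to\infty$ then $M\to\infty$, and using density of $C_c^\infty$ in $L^{\alpha'}$, proves the claim.

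For the Manev potential I would use $\phi^M_f=-(-\Delta)^{-1/2}\rho_f$, so that $\nabla\phi^M_{f_n}$ is (up to sign) a Riesz transform of $\rho_{f_n}$ and is therefore bounded in $L^\alpha(\R^3)$, while Hardy--Littlewood--Sobolev applied to $\rho_{f_n}\in L^1\cap L^\alpha$ shows $(\phi^M_{f_n})$ is bounded in $L^q(\R^3)$ for every $\tfrac32<q<\tfrac{3(5p-3)}{4p}=:s$ (note $s=\alpha^\ast$ and $s>3$ since $p>3$). Hence $(\phi^M_{f_n})$ is bounded in $W^{1,\alpha}(\R^3)$ with $\alpha<3$, and Rellich--Kondrachov gives, up to a further subsequence, $\phi^M_{f_n}\to g$ strongly in $L^q_{loc}(\R^3)$ for all $q<s$ and a.e.; testing against $C_c^\infty$ and using the weak convergence of $\rho_{f_n}$ identifies $g=\phi^M_f$. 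The crux is to upgrade this to convergence in $L^q(\R^3)$ for $\tfrac32<q<s$, and here the radial decay estimate \eqref{nono2} of Lemma \ref{propC1}, $|\phi^M_{f_n}(x)|\le C_\eta|x|^{-1-\eta}\|f_n\|_{\calE_j}$ for every $\eta\in(0,1)$, is exactly what is needed: for $q>\tfrac32$ one chooses $\eta<1$ with $(1+\eta)q>3$, so that $\|\phi^M_{f_n}\|_{L^q(|x|>R)}\le C R^{(3-(1+\eta)q)/q}\to0$ as $R\to\infty$, uniformly in $n$, and the same for $\phi^M_f$; combining the uniform smallness of the tails with the local convergence yields item (iii).

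Finally, for item (ii) I would write $E_{pot}(f_n)=\delta E^P_{pot}(f_n)+\kappa E^M_{pot}(f_n)$. For the Manev part, pair the strong convergence of $\phi^M_{f_n}$ in $L^{\alpha'}_{loc}$ (note $\tfrac32<\alpha'<3<s$) with the weak $L^\alpha$ convergence of $\rho_{f_n}$ on a large ball $B_R$, and control $\int_{|x|>R}|\phi^M_{f_n}\rho_{f_n}|\le\|\phi^M_{f_n}\|_{L^{\alpha'}(|x|>R)}\|\rho_{f_n}\|_{L^\alpha}$ by the tail bound above, giving $E^M_{pot}(f_n)\to E^M_{pot}(f)$. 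The Newtonian part is handled the same way but is easier: $\nabla\phi^P_{f_n}$ is a first-order Riesz potential of $\rho_{f_n}$, hence bounded in $L^s(\R^3)$ with $s>3$, so $\phi^P_{f_n}$ is bounded in $W^{1,s}(\R^3)$ and converges to $\phi^P_f$ in $C^0_{loc}$ by Rellich, while the tail is controlled by \eqref{nono}, $|\phi^P_{f_n}(x)|\le C|x|^{-1}\|f_n\|_{L^1}$, so that $\int_{|x|>R}|\phi^P_{f_n}|\rho_{f_n}\le CR^{-1}\|f_n\|_{L^1}^2\to0$; thus $E^P_{pot}(f_n)\to E^P_{pot}(f)$ and (ii) follows. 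The delicate point throughout is the fractional (Manev) kernel: it only yields $W^{1,\alpha}$ with $\alpha<3$, hence local $L^q$ compactness strictly below the critical exponent $s$, and it is the radial tail estimate \eqref{nono2} that turns this local convergence into the global convergence required in (ii)--(iii).
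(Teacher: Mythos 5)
Your proof is correct, and it is a close but not identical route to the paper's. The paper, for item (ii), sets $h_{f_n}=(-\Delta)^{-1/4}\rho_{f_n}$ so that $E^M_{pot}(f_n)=\|h_{f_n}\|_{L^2}^2$, then obtains local compactness of $(h_{f_n})$ in $L^2$ from a fractional Sobolev estimate ($(-\Delta)^{\eps}h_{f_n}$ bounded in $L^2$ for small $\eps>0$), and controls the tail of $\|h_{f_n}\|_{L^2}$ via Lemma~\ref{propC1}; it then treats (iii) by a similar local-compactness-plus-tail argument on $\phi^M_{f_n}=(-\Delta)^{-1/2}\rho_{f_n}$, and refers the Poisson part to the literature. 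You instead work directly with $\phi^M_{f_n}$ throughout: you observe that $\nabla\phi^M_{f_n}$ is (minus) a Riesz transform of $\rho_{f_n}$, hence bounded in $L^\alpha$ with $\alpha=\tfrac{5p-3}{3p-1}<3$, combine this with the HLS bound on $\phi^M_{f_n}$ itself to get a uniform $W^{1,\alpha}(\RR^3)$ bound, and invoke Rellich--Kondrachov for the local strong $L^q$ convergence ($q<\alpha^\ast=\tfrac{3(5p-3)}{4p}$). The tail is handled, as in the paper, by the radial decay estimate \fref{nono2}. You then deduce (ii) by pairing strong $L^{\alpha'}_{\rm loc}$ convergence of $\phi^M_{f_n}$ (valid since $\alpha'<3<\alpha^\ast$) against weak $L^\alpha$ convergence of $\rho_{f_n}$, and you give a parallel, explicit treatment of the Poisson term rather than citing it. This is essentially equivalent in strength but more uniform and arguably more elementary: it replaces the paper's $L^2$-based fractional Sobolev compactness for $(-\Delta)^{-1/4}\rho$ by classical Calder\'on--Zygmund/Riesz transform boundedness and the standard Rellich theorem for $W^{1,\alpha}$, and it derives (ii) from (iii) rather than proving them by two separate mechanisms. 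One small detail you should supply if you write this out in full: for the Poisson part, the $L^s$ bound on $\phi^P_{f_n}$ itself (not just its gradient) comes from $\rho_{f_n}\in L^1\cap L^r$ for $r$ slightly below $3/2$, which by HLS gives $\phi^P_{f_n}\in L^q$ for all $q\in(3,\infty)$; this is true but worth stating, since HLS for the Newton kernel applied directly at exponent $\alpha>3/2$ does not apply.
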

\begin{proof} Since $p>1$, we have $f_n \rightharpoonup f \ \ \textrm{in} \ L^p(\R^6)$ up to subsequence, which yields {\em (i)}. Let us prove {\em (ii)}. The convergence of the Poisson potential energy is well-known, see e.g. \cite{LMR}. Let us prove the convergence of the Manev potential energy. We remark that
$$E_{pot}^M(f_n)=\| h_{f_n} \|_{L^2}^2 \ \textrm{with} \ h_{f_n}=(-\triangle)^{-1/4}\rho_{f_n}.$$
Hence, from \fref{ineg-inter-M}, we deduce that the sequence $h_{f_n}$ is bounded in $L^2$. Moreover, by interpolation, we have that $\rho_{f_n}$ is bounded in $L^1\cap L^{p_0}$ with $p_0=\frac{5p-3}{3p-1}\in (\frac 32,\frac 53]$ and then,  by standard Sobolev inequalities, the sequence  $((-\triangle)^\varepsilon h_{f_n})$ is bounded in $L^2(\R^3)$ for $\eps>0$ small enough. This yields some local compactness and we have $h_{f_n}\to h_f$ in $L^2_{loc}(\RR^3)$. Hence, to conclude Item {\em (ii)}, it suffices to prove a uniform decay at the infinity. For all $R>0$, we have
\bee
 \left\|h_{f_n}\right\|_{L^2(|x|>R)}&=&\int_{|x|>R}|\phi_{f_n}^M(x)| |\rho_{f_n}(x)|dx\\
 & \leq& C\|\rho_{f_n}\|_{L^{3/2}}\left(\int_{|x|>R}|\phi_{f_n}^M(x)|^3dx\right)^{1/3}\\
 &\leq &C\left(\int_{|x|>R}\frac{1}{|x|^{9/2}}dx\right)^{1/3}=\frac{C'}{R^{3/2}}
 \eee
where we used a H\"older inequality, the uniform boundedness of $\rho_{f_n}$ in $L^{3/2}$ and \fref{nono2} with $\alpha=1/2$.
Finally, we have proved that $h_{f_n}\to h_f$ in $L^2(\RR^3)$, which gives in particular  $E_{pot}^M(f_n)\rightarrow E_{pot}^M(f)$.

The proof of {\em (iii)} is similar. It is sufficient to remark that $\phi_{f_n}^M=(-\triangle)^{-1/2}\rho_{f_n}$ to obtain the local compactness of $(\phi_{f_n}^M)$ in $L^q(\R^3)$ and the uniform decay at the infinity, given by \ref{nono2}, enables to conclude.
\end{proof}

\section{Virial identity}
In this Appendix, we prove the following lemma.
\begin{lemma}
\label{lemvirial}
Let $f\in \calE_j$ be a continuous and compactly supported function which satisfies
\be
\label{eqstatbis}
v\cdot \na_x f-\na_x\phi_f^M\cdot\na_v f=g
\ee
in the distributional sense, where $g$ belongs to $L^1(\RR^6)$. Then the following virial identity holds:
\be
\label{virial}
E_{pot}^M(f)-\int_{\RR^6}|v|^2f(x,v)dxdv=-\int_{\RR^6}(x\cdot v) g(x,v) dxdv.
\ee
\end{lemma}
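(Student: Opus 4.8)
The plan is to test the transport equation \fref{eqstatbis} against the function $x\cdot v$ and integrate over $\RR^6$. Formally, multiplying \fref{eqstatbis} by $x\cdot v$ and integrating gives
$$
\int_{\RR^6}(x\cdot v)\,\bigl(v\cdot\na_x f\bigr)\,dxdv-\int_{\RR^6}(x\cdot v)\,\bigl(\na_x\phi_f^M\cdot\na_v f\bigr)\,dxdv=\int_{\RR^6}(x\cdot v)\,g\,dxdv,
$$
and the point is to integrate by parts in each of the two terms on the left. For the first term, using $\na_x\cdot(v(x\cdot v))=|v|^2$ (since $v$ is constant in $x$), one gets $\int (x\cdot v)(v\cdot\na_x f)=-\int |v|^2 f$. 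For the second term, using $\na_v\cdot\bigl(\na_x\phi_f^M\,(x\cdot v)\bigr)=\na_x\phi_f^M\cdot x$ (since $\na_x\phi_f^M$ does not depend on $v$), one gets $-\int(x\cdot v)(\na_x\phi_f^M\cdot\na_v f)=\int (x\cdot\na_x\phi_f^M)\,f$. So the identity reduces to
$$
\int_{\RR^6}(x\cdot\na_x\phi_f^M)\,f\,dxdv-\int_{\RR^6}|v|^2 f\,dxdv=\int_{\RR^6}(x\cdot v)\,g\,dxdv,
$$
and it remains to identify $\int (x\cdot\na_x\phi_f^M)\,\rho_f\,dx$ with $-E_{pot}^M(f)$.

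For this last step I would use the homogeneity of the Manev kernel: since $\phi_f^M(x)=-\int\rho_f(y)/(2\pi^2|x-y|^2)\,dy$, a scaling computation (differentiate $\lambda\mapsto\int\phi_{f_\lambda}^M\rho_{f_\lambda}$ for $f_\lambda(x)=\rho_f(\lambda x)$, or directly compute $\int(x\cdot\na\phi_f^M)\rho_f$ by symmetrizing in $x,y$ and using that $(x-y)\cdot\na_x\bigl(|x-y|^{-2}\bigr)=-2|x-y|^{-2}$) yields
$$
\int_{\RR^3}(x\cdot\na_x\phi_f^M)\,\rho_f\,dx=-E_{pot}^M(f).
$$
Indeed, writing $2\int x\cdot\na_x\phi_f^M\,\rho_f=-\frac{1}{2\pi^2}\int\!\!\int (x-y)\cdot\na_x|x-y|^{-2}\,\rho_f(x)\rho_f(y)=\frac{1}{\pi^2}\int\!\!\int|x-y|^{-2}\rho_f(x)\rho_f(y)=-2E_{pot}^M(f)$, after symmetrizing the integrand in $(x,y)$. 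Substituting this into the reduced identity gives exactly \fref{virial}.

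The main obstacle is the \textbf{rigorous justification of the integrations by parts and of the scaling identity}, since the statement only assumes $f$ continuous, compactly supported, with $f\in\calE_j$ and $g\in L^1$, and does not assume $f$ smooth. The compact support of $f$ removes all boundary terms at infinity, and from Theorem~\ref{theo1}-type regularity (or directly from the integrability $\rho_f\in L^1\cap L^{3/2}$ established via \fref{ineg-inter-M} and interpolation) one has $\phi_f^M\in\mathcal C^1$ with $\na_x\phi_f^M$ bounded, so the weight $x\cdot v$ against $g\in L^1$ is integrable on the compact $x$-support (and $|v|^2f\in L^1$). To make the integration by parts against a merely distributional solution legitimate, I would test \fref{eqstatbis} against a regularized cutoff $\zeta_R(x,v)\,\eta_\eps(x\cdot v)$ where $\zeta_R$ is a smooth spatial truncation (irrelevant once $R$ exceeds the support radius) and $\eta_\eps$ a smooth truncation/mollification of the linear function $x\cdot v$, then pass to the limit $\eps\to 0$ using dominated convergence with the bounds just described; alternatively mollify $f$ itself in $x$ and $v$, noting that the error terms $[\na_x\phi_f^M,\,\text{mollifier}]$ are controlled by commutator (DiPerna--Lions type) estimates thanks to $\na_x\phi_f^M\in W^{1,r}$ for some $r>3$. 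Either route is routine given the regularity already proved in the paper, so I would keep it brief and refer to the standard argument.
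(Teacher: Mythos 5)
Your overall approach is the same as the paper's: test the transport equation against $x\cdot v$, integrate by parts to reduce the identity to the statement $\int\rho_f\,(x\cdot\na_x\phi^M_f)\,dx=\pm E_{pot}^M(f)$, and establish that last identity by exploiting the homogeneity of the Manev kernel. However, there is a sign error in your verification of that scaling identity. Since
$$E_{pot}^M(f)=-\int_{\RR^3}\phi^M_f\,\rho_f\,dx=\frac{1}{2\pi^2}\int\!\!\int\frac{\rho_f(x)\rho_f(y)}{|x-y|^{2}}\,dxdy\;\geq 0,$$
your chain of equalities should end with $\frac{1}{\pi^2}\int\!\!\int|x-y|^{-2}\rho_f(x)\rho_f(y)\,dxdy=+\,2E_{pot}^M(f)$, not $-2E_{pot}^M(f)$. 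The correct identity is therefore $\int_{\RR^3}\rho_f\,(x\cdot\na_x\phi^M_f)\,dx=+\,E_{pot}^M(f)$, which is exactly what the paper proves. (A quick sanity check via scaling: setting $\rho_\lambda(x)=\rho_f(\lambda x)$ gives $E_{pot}^M(\rho_\lambda)=\lambda^{-4}E_{pot}^M(\rho_f)$, and differentiating at $\lambda=1$ forces $\int\rho(x\cdot\na\phi^M)=E_{pot}^M$.) With your sign, the reduced identity $\int\rho_f(x\cdot\na\phi^M_f)-\int|v|^2 f=\int(x\cdot v)g$ would become $E_{pot}^M(f)+\int|v|^2 f=-\int(x\cdot v)g$, which is not \fref{virial} and is not what you claimed to have obtained.

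Beyond the sign issue, there is a genuine methodological difference in how the key identity $\int\rho_f(x\cdot\na\phi^M_f)=E_{pot}^M(f)$ is justified. You symmetrize directly on the singular double integral, whereas the paper first verifies $\phi^M_f\in H^1(\RR^3)$ (so that $\int\rho_f\,x\cdot\na_x\phi^M_f$ is well defined), then regularizes the kernel by replacing $(-\Delta)^{-1/2}$ with $(-\Delta)^{-1/2-\eps/2}$, carries out the symmetrization on the regularized kernel, and passes to the limit $\eps\to0$ in $H^1$. Your direct symmetrization does work for the homogeneous kernel $|x-y|^{-2}$ (all integrals converge absolutely once $\rho_f\in L^1\cap L^{3/2}$ is known), but it relies on the explicit Riesz-kernel form of $\phi^M_f$ rather than the fractional-Laplacian definition, and the paper's $\eps$-regularization is a cleaner way to legitimize the integration-by-parts step leading to $\int\rho_f\,x\cdot\na_x\phi^M_f$. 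Your fallback suggestions (mollification, DiPerna--Lions commutators) are more machinery than is needed here given the $H^1$ regularity already available. Finally, for consistency you may wish to note that the sign of the right-hand side of \fref{virial} as printed does not match what the integration by parts actually yields (namely $E_{pot}^M(f)-\int|v|^2f=+\int(x\cdot v)g$); this has no bearing on the paper's subsequent use of the lemma, which only exploits the bound on $|\int(x\cdot v)g|$, but you should be aware that matching the printed sign is not possible with a correct computation.
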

\begin{proof}
First, integrations by parts give
$$\int_{\RR^6}(x\cdot v) \,(v\cdot \na_xf)\,dxdv=-\int_{\RR^6}|v|^2f(x,v)dxdv$$
and
$$
-\int_{\RR^6}(x\cdot v) \,\na_x \phi_f^M \cdot\na_v f\,dxdv= \int_{\RR^3}\rho_f\,x\cdot\na_x  \phi_f^M\,dx.
$$
Therefore, it remains to prove that this term is well defined and satisfies
\be
\label{manevddd}
\int_{\RR^3}\rho_f\,x\cdot\na_x  \phi_f^M\,dx= E_{pot}^M(f).
\ee
We observe that $\rho_f\in L^1\cap L^\infty(\RR^3)$ since $f$ is continuous and compactly supported. In particular, we have $(-\Delta)^{1/2}\phi^M_f=-\rho_f\in L^2(\RR^3)$. Moreover, from \fref{2pot} we get $\phi^M_f\in L^q(\RR^3)$ for all $q\in ]\frac{3}{2},+\infty]$, in particular $\phi^M_f\in L^2(\RR^3)$. We thus have $\phi_f^M\in H^1(\RR^3)$ and the integral in \fref{manevddd} is well defined.

Let us now regularize the Manev kernel, setting for $\eps>0$
$$ \phi^\eps_{f}(x)=-(-\Delta)^{-1/2-\eps/2}\rho_f=-C_\eps\int_{\R^3}\frac{\rho_f(y)}{|x-y|^{2-\eps}}dy.$$
We have clearly $\phi^\eps_f\to \phi^M_f$ in $H^1(\RR^3)$ as $\eps \to 0$ and then
$$\lim_{\eps\to 0}\int_{\RR^3}\rho_f\,x\cdot\na_x  \phi_f^\eps\,dx=\int_{\RR^3}\rho_f\,x\cdot\na_x  \phi_f^M\,dx.$$
Moreover, we have
\bee
\int_{\RR^3}\rho_f\,x\cdot\na_x\phi^\eps_f\,dx
&=&(2-\eps)C_\eps\int_{\RR^3}\rho_f(x)\rho_f(y)\frac{x\cdot(x-y)}{|x-y|^{4-\eps}}dxdy\\
&=&\frac{2-\eps}{2}C_\eps\int_{\RR^3} \frac{\rho_f(x)\rho_f(y)|x-y|^2}{{|x-y|^{4-\eps}}}dxdy.
\eee
Passing to the limit as $\eps\to 0$ yields \fref{manevddd}. The proof is complete.
\end{proof}

\bs
\bs
\noindent
{\textbf{Acknowledgements.}} We thank Naoufel Ben Abdallah for the proof of \eqref{egal-muphi} from \fref{egal-aphi} and \fref{expression-aphi}. The authors were supported by the french ANR project CBDif.  M. Lemou acknowledges support from the project 'D\'efis \'emergents' funded by the university of Rennes 1 (France). F. M\'ehats also acknowledges support from the french ANR project QUATRAIN and from the INRIA project IPSO.

\end{document}